\documentclass[11pt,twoside]{amsart}

\usepackage[T1]{fontenc}
\usepackage[latin1]{inputenc}% Codage du fichier en ISO-Latin-1 (accents...)
\usepackage[english]{babel}%

\setlength{\textwidth}{15.3cm}
\setlength{\textheight}{21cm}
\setlength{\oddsidemargin}{0.6cm}
\setlength{\evensidemargin}{0.6cm}
\addtolength{\topmargin}{-0.5cm}
\setlength\marginparwidth{2.9cm}
\setlength\marginparsep{.15cm}

\usepackage{amsmath,amsthm,amssymb,amsfonts,amscd, stmaryrd}
\usepackage{mathtools}
\usepackage{url}
\usepackage{color}
\usepackage{setspace}
\usepackage{enumerate}
\usepackage{pinlabel}
\DeclareGraphicsRule{.tif}{png}{.png}{`convert #1 `dirname #1`/`basename #1 .tif`.png}

\linespread{1}

\usepackage{graphicx}
\usepackage{caption}
\usepackage{psfrag}
\usepackage{wrapfig}
\usepackage{mathrsfs}
\usepackage{MnSymbol}
\usepackage{tikz-cd}

\definecolor{grey}{rgb}{0.5,0.5,0.5}
\definecolor{rouge}{rgb}{1, 0, 0}
\definecolor{vert}{rgb}{0,0.75,0}
\definecolor{magenta}{rgb}{0.75, 0, 0.75}
\definecolor{bleu}{rgb}{0, 0.5, 1}

%%%%%%%%%%%%%%%%%%%%%%%%%%%%%%%%%%%%%%%%%%%%%%%%%%%

\theoremstyle{plain}
\newtheorem{theorem}{Theorem}
\newtheorem{proposition}[theorem]{Proposition}

\newtheorem{lemma}[theorem]{Lemma}

\newtheoremstyle{theoremwithref}{}{}{\itshape}{}{\bfseries}{.}{.5em}{#1 #2 #3}
\theoremstyle{theoremwithref}

\theoremstyle{definition}
\newtheorem{definition}[theorem]{Definition}
\newtheorem{example}[theorem]{Example}
\newtheorem{remark}[theorem]{Remark}

\numberwithin{theorem}{section}
\numberwithin{equation}{section}

\setcounter{tocdepth}{1}

%%%%%%%%%%%%%%%%%%%%%%%%%%%%%%%%%%%%%%%%%%%%%%%%%%%

\newcommand{\ie}{i.e.\ }
\newcommand{\eg}{e.g.\ }
\newcommand{\resp}{resp.\ }

\newcommand{\R}{\mathbb{R}}

\newcommand{\PP}{\mathbf{P}}

\newcommand{\RR}{\mathbf{R}}
\newcommand{\ZZ}{\mathbf{Z}}
\newcommand{\NN}{\mathbf{N}}
\newcommand{\QQ}{\mathbf{Q}}

\newcommand{\SL}{\mathrm{SL}}
\newcommand{\PGL}{\mathrm{PGL}}
\newcommand{\PSL}{\mathrm{PSL}}

\newcommand{\g}{\mathfrak{g}}

\newcommand{\aaa}{\mathfrak{a}}

\newcommand{\Hom}{\mathrm{Hom}}

\newcommand{\HH}{\mathbb{H}}

\newcommand{\arccosh}{\mathrm{arccosh}}
\newcommand{\arcsinh}{\mathrm{arcsinh}}

\newcommand{\slack}{\mathsf{X}}
\newcommand{\bslambda}{\boldsymbol{\lambda}}
\newcommand{\bsrho}{\boldsymbol{\rho}}
\newcommand{\bssigma}{\boldsymbol{\sigma}}

%%%%%%%%%%%%%%%%%%%%%%%%%%%%%%%%%%%%%%%%%%%%%%%%%%%
%%%%%%%%%%%%%%%%%%%%%%%%%%%%%%%%%%%%%%%%%%%%%%%%%%%
\title{Limit cones of multi-Fuchsian representations}

\author{Jeffrey Danciger}
\address{Department of Mathematics, The University of Texas at Austin, 1 University Station C1200, Austin, TX 78712, USA}
\email{jdanciger@math.utexas.edu}

\author{Fran\c{c}ois Gu\'eritaud}
\address{CNRS and IRMA, Universit\'e de Strasbourg, 7 rue Ren\'e Descartes, 67084 Strasbourg Cedex, France}
\email{francois.gueritaud@unistra.fr}

\author{Fanny Kassel}
\address{CNRS and Laboratoire Alexander Grothendieck, Institut des Hautes \'Etudes Scientifiques, Universit\'e Paris-Saclay, 35 route de Chartres, 91440 Bures-sur-Yvette, France}
\email{kassel@ihes.fr}

\thanks{J.D. was partially supported by the National Science Foundation under grant DMS 1945493.
Part of this work was completed while J.D. and F.G were in residence at the IHES in the spring 2023, while F.G. was in residence at UT Austin in the spring 2024, supported in part by the National Science Foundation under grant DMS 1937215, and while F.K. was in residence at the Institute for Advanced Study in Princeton in the spring 2024, supported by the National Science Foundation under grant DMS 1926686.}

\begin{document}

\maketitle

\begin{abstract}
We study the set of normalized multi-lengths for representations of closed surface groups and free groups into $(\mathrm{PSL}_2 \RR)^d$ whose projections to $\PSL_2 \RR$ are all convex cocompact.
These multi-lengths define a convex cone in $\RR^d_{\geq 0}$, called the limit cone.
When $d=3$, we show the coexistence of different regimes: for some representations the limit cone has only a finite number of sides, which we can force to grow like the genus (or free rank); for other representations, extremal rays are dense in the boundary of the limit cone.
We also give examples where the limit cone varies discontinuously with the representation.
\end{abstract}

\tableofcontents

%%%%%%%%%%%%%%%%%%%%%%%%%%%%%%%%%%%%%%%%%%%%%%%%%%%
\section{Introduction}

%%%%%%%%%%%%%%%%%%%%%%%%%
\subsection{The limit cone} \label{subsec:intro-lim-cone}

Let $\rho: \Gamma \to G$ be a representation of a discrete group $\Gamma$ into a real linear semisimple Lie group $G$. 
The \emph{limit cone} $\Lambda_\rho$ of $\rho(\Gamma)$ is a closed cone in a finite-dimen\-sional real vector space $\mathfrak{a} \simeq \RR^{\mathrm{rank}_{\RR}(G)}$, which records the range of possible spectral data achieved by elements of $\rho(\Gamma)$.
More precisely, $\Lambda_\rho$ is the closure of the cone spanned by $\bslambda(\rho(\Gamma))$, where $\bslambda: G \to \mathfrak{a}^+$ is the \emph{Jordan projection}, a conjugation-invariant map taking values in a closed positive Weyl chamber $\mathfrak{a}^+$ of~$\mathfrak{a}$.
An example to have in mind, and the main case we will explore in this paper, is that of the product $G = (\PSL_2 \RR)^d$ of $d \geq 2$ copies of $\PSL_2 \RR$.
In this case, the Jordan projection $\bslambda: (\PSL_2 \RR)^d \to \mathfrak{a}^+ \simeq \RR^d_{\geq 0} := [0,+\infty)^d$  is simply the product of $d$ copies of the translation length function $\lambda: \PSL_2 \RR \to \RR_{\geq 0}$ for the action of $\PSL_2 \RR$ on the hyperbolic plane $\HH^2$, and the limit cone $\Lambda_\rho$ of $\rho = (\rho_1, \ldots, \rho_d)$ is the closure of the cone spanned by the multi-lengths $\bslambda (\rho(\gamma)) = (\lambda(\rho_1(\gamma)), \ldots, \lambda(\rho_d(\gamma)))$, with $\gamma$ ranging over the group~$\Gamma$.

Benoist~\cite{ben97} showed that the limit cone $\Lambda_\rho$ is convex with nonempty interior as soon as $\rho$ is Zariski dense, which has some interesting applications (see \eg \cite{dk02,ben04,bcls15,zim21,sam24}); moreover, he showed that any closed convex cone of~$\mathfrak{a}^+$ with nonempty interior (and which is stable under the opposition involution) is achieved as the limit cone for some representation of an infinitely generated free group.
However, little is known about the possible shapes the limit cone may take for a fixed finitely generated group $\Gamma$ and semisimple Lie group~$G$.
The present paper is concerned with the basic question of how to compute $\Lambda_\rho$ and, to that end, how to certify that a particular ray lies in its boundary. 

Of particular interest is the case that $\Gamma = \pi_1(S)$ is the fundamental group of a finite-type surface $S$ of negative Euler characteristic, the group $G$ is real split, and the representation $\rho$ is positive in the sense of Fock--Goncharov~\cite{fg07}.
This class of representations is one of the main objects of study in the subject of higher Teichm\"uller theory (see \eg \cite{biw14,wie-icm}), and the shape of the limit cone for these representations would seem to be a powerful invariant, were there techniques for computing it.
However, it has proven extremely difficult to compute the limit cone, even in individual examples.
In the case that the rank of $G$ is at least three, it was not even known whether the limit cone is ever finite-sided, or ever infinite-sided.
Even in the case that the rank of $G$ is two, in which the limit cone is a cone on an interval, determining the precise endpoints of this interval seems to be difficult in general.

There is, however, one case for which techniques to compute the limit cone already exist, namely the case of $G = (\PSL_2 \RR)^2$.
The starting point for the present paper is the observation that the problem of computing limit cones for positive representations of (closed) surface groups into $(\PSL_2\RR)^2$ is exactly the problem of computing Thurston's asymmetric metric on Teichm\"uller space: the projectivized limit cone is precisely the closure of the set of length ratios considered by Thurston~\cite{thu-stretch-maps}.
The theory of the asymmetric metric makes computing such limit cones tractable in many cases.
Building on some of these ideas, the main purpose of this paper is to demonstrate a technique to compute the precise shape of the limit cone in some examples for the case $G = (\PSL_2 \RR)^d$ with $d \geq 3$.

%%%%%%%%%%%%%%%%%%%%%%%%%
\subsection{Main results} \label{subsec:intro-results}

We use the following terminology.

\begin{definition} \label{def:multi-Fuchsian}
Let $S$ be a finite-type surface of negative Euler characteristic and let $d\geq 1$ be an integer.
A representation $\bsrho = (\rho_1, \ldots, \rho_d): \pi_1(S) \to G = (\PSL_2 \RR)^d$ is \emph{multi-Fuchsian} if each component~$\rho_i$ is the holonomy representation of a complete hyperbolic structure on~$S$; we say that $\bsrho$ is \emph{multi-Fuchsian with no cusps} if this hyperbolic structure has no cusps for each~$i$.
\end{definition}

(If $\bsrho$ is multi-Fuchsian with no cusps, then the subgroups $\rho_i(\pi_1(S))$ of $\PSL_2 \RR$ are convex cocompact and $\bsrho$ is a positive representation in the sense of Fock--Goncharov \cite{fg07}.)

Let $\bslambda: G \to \mathfrak{a}^+ = \RR^d_{\geq 0}$ be the Jordan projection defined above.
Then the function $\bslambda_{\bsrho} = \bslambda \circ \bsrho : \pi_1(S) \to \RR^d_{\geq 0}$ assigns to each (free homotopy class of) closed curve $\gamma$ on~$S$, the vector $\bslambda_{\bsrho}(\gamma) = (\lambda_{\rho_1}(\gamma), \ldots, \lambda_{\rho_d}(\gamma))$ whose components are the geodesic lengths $\lambda_{\rho_i}(\gamma) := \lambda(\rho_i(\gamma))$ of $\gamma$ in each of the $d$ hyperbolic structures.
The limit cone $\Lambda_{\bsrho}$ is the closure of the cone spanned by the image of $\bslambda_{\bsrho}$.
When the component representations $\rho_i$ of a multi-Fuchsian representation $\bsrho$ are pairwise nonconjugate, $\Lambda_{\bsrho}$ is convex with nonempty interior (see Section~\ref{subsec:Z-dense-multi-Fuchsian}).
 
We present here a technique for explicitly determining the boundary of the limit cone for certain multi-Fuchsian representations. We obtain the following results in the case $d = 3$, demonstrating the coexistence of two very different regimes.

\begin{theorem} \label{thm:N-gon}
For any $g\geq 2$, let $S_g$ be a closed orientable surface of genus~$g$.
Then there exist multi-Fuchsian representations $\boldsymbol{\rho} : \pi_1(S_g) \to (\PSL_2\RR)^3$ whose projectivized limit cone $\PP \Lambda_{\boldsymbol{\rho}}$ is a finite polygon with at least $4g-1$ sides.
\end{theorem}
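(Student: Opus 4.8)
The plan is to realize the limit cone combinatorially via Thurston's stretch-map picture extended to three factors. Fix the first component $\rho_1$ to be a fixed hyperbolic structure on $S_g$ carrying a maximal geodesic lamination $\mu$ whose complement is a union of ideal triangles; the stretch flow along $\mu$ then generates a ray of hyperbolic structures, and the key observation (going back to Thurston for $d=2$) is that for a structure $\rho_2$ obtained by stretching along such a lamination $\mu_2$, the length ratio $\lambda_{\rho_2}(\gamma)/\lambda_{\rho_1}(\gamma)$ is controlled by how $\gamma$ crosses $\mu_2$, and is maximized along finitely many directions corresponding to the ``cusps'' of $\mu_2$. First I would choose $\mu_2$ and $\mu_3$ to be two distinct maximal laminations on $S_g$, carried by (say) two different pants decompositions, chosen so that the associated ``stretch'' functionals $\phi_2, \phi_3 : \pi_1(S_g) \to \RR_{\geq 0}$ (measuring transverse intersection data) are piecewise-linear with explicitly understood supporting pieces. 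Then I would take $\rho_2$ (resp.\ $\rho_3$) to be $\rho_1$ stretched by a large factor $t_2$ (resp.\ $t_3$) along $\mu_2$ (resp.\ $\mu_3$).

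The heart of the argument is a length estimate: after stretching by a large factor $t_i$ along $\mu_i$, for every closed curve $\gamma$ one has
\[
\lambda_{\rho_i}(\gamma) \;=\; t_i\, i(\gamma, \mu_i) \;+\; O\!\left(\ell_{\rho_1}(\gamma)\right),
\]
where $i(\gamma,\mu_i)$ is the transverse measure (length of $\gamma$ along the lamination / its intersection with the triangulation dual to $\mu_i$), with the error uniform over all $\gamma$. Consequently, as $t_2, t_3 \to \infty$, the normalized vector $\bslambda_{\bsrho}(\gamma)/\lambda_{\rho_1}(\gamma)$ concentrates near the point $\big(1, t_2\, i(\gamma,\mu_2)/\ell_{\rho_1}(\gamma), t_3\, i(\gamma,\mu_3)/\ell_{\rho_1}(\gamma)\big)$, and the projectivized limit cone $\PP\Lambda_{\bsrho}$ converges (in Hausdorff distance) to the convex hull of the finitely many extremal slopes of the pair of piecewise-linear functions $\gamma \mapsto (i(\gamma,\mu_2)/\ell_{\rho_1}(\gamma),\, i(\gamma,\mu_3)/\ell_{\rho_1}(\gamma))$. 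These extremal slopes are attained at the simple closed curves running once around each pair of pants curve and at the ``seams'' of the triangulations, giving a finite polygon. The counting step is then: with $\mu_2$ and $\mu_3$ chosen with disjoint-enough supports (their union filling $S_g$ in a controlled way, e.g.\ dual to a pants decomposition with $3g-3$ curves plus the $2g-2$ triangle seams), the number of genuinely extremal vertices grows linearly in $g$; a direct bookkeeping of the pieces yields at least $4g-1$ sides.

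The main obstacle I anticipate is making the ``$O(\ell_{\rho_1}(\gamma))$'' error term genuinely uniform and small enough that the finitely many candidate extremal rays really do appear on the boundary of $\Lambda_{\bsrho}$ rather than in its interior — i.e., showing that no curve's length vector strictly dominates the proposed polygon. This is exactly where the asymmetric-metric technology is needed: one must certify each candidate boundary ray by exhibiting, for the corresponding direction, an optimal Lipschitz/stretch map simultaneously for the pair $(\rho_1 \to \rho_2)$ and $(\rho_1 \to \rho_3)$ along curves in that direction, and a dual object (a measured lamination or a sequence of curves) realizing the ratio, so that the ray cannot be pushed outward. The second, more bookkeeping-heavy obstacle is choosing $\mu_2$, $\mu_3$ (equivalently the two pants decompositions and their triangulations) so that the counting is clean and the lower bound $4g-1$ — rather than some smaller linear function of $g$ — comes out; I would expect to pick the standard genus-$g$ pants decomposition and a Dehn-twisted copy of it, and count the resulting linear-pieces explicitly.
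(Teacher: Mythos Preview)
Your approach has two genuine gaps.

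The first is the length formula: a Thurston stretch map along a maximal lamination $\mu$ by parameter~$t$ does not give $\lambda_{\rho_i}(\gamma) = t_i\,i(\gamma,\mu_i) + O(\ell_{\rho_1}(\gamma))$. Rather, it multiplies all lengths by a factor $\leq e^t$, with equality exactly on curves carried by~$\mu$; intersection number does not enter linearly this way, and indeed a maximal lamination whose complement consists of ideal triangles typically carries no transverse measure to intersect with. You seem to have in mind a pinching picture (making a multicurve $\alpha$ short, so that $\lambda(\gamma)\sim i(\gamma,\alpha)\,|\log\ell(\alpha)|$), which is a different deformation. Even granting such a formula, the map $\gamma \mapsto i(\gamma,\mu_i)/\ell_{\rho_1}(\gamma)$ is not piecewise linear on curves with finitely many extremal values; its supremum is realized on a lamination, so there is no a~priori reason for the image in $\PP(\RR^3_{\geq 0})$ to have only finitely many extremal points.

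The second, more fundamental, gap is the passage from ``simple curves span a polygon'' to ``the full limit cone is that polygon''. Your proposed certificate --- an optimal Lipschitz map for each boundary direction --- only controls functionals of the form $\lambda_{\rho_i}/\lambda_{\rho_j}$, i.e.\ supporting hyperplanes through a coordinate axis of~$\aaa$; a generic face of a polygon in $\PP(\RR^3_{\geq 0})$ has a supporting hyperplane involving all three coordinates and is not detected by any single stretch map. This is precisely what Theorem~\ref{thm:azimut-limit-cone} supplies: every \emph{azimuthal} supporting hyperplane of $\Lambda_{\bsrho}$ already supports the simple hull $\Lambda^s_{\bsrho}$, so if one arranges $\PP\Lambda^s_{\bsrho}$ to be an azimuthal polygon (Definition~\ref{def:azim-polygon}), it automatically equals $\PP\Lambda_{\bsrho}$. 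The paper's construction --- doubling a right-angled $(2g+2)$-gon to get a $(\ZZ/2\ZZ)^2$-symmetric genus-$g$ surface, so that only finitely many edges and chords generate $\Lambda^s_{\bsrho}$, then taking three copies with cyclically shifted edge labellings and short chords tuned so that the resulting polygon is azimuthal with $\geq 4g-1$ sides --- is engineered specifically to meet this azimuthality constraint, which your proposal does not address at all.
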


\begin{theorem} \label{thm:fishy}
Let $S$ be a one-holed torus.
Then there exist multi-Fuchsian representations $\boldsymbol{\rho} : \pi_1(S) \to (\PSL_2\RR)^3$ such that the projectivized limit cone $\PP \Lambda_{\boldsymbol{\rho}}$ is strictly convex, and every simple closed curve in~$S$ defines an extremal point in $\PP \Lambda_{\boldsymbol{\rho}}$.
These extremal points are all conical, and dense in the boundary of $\PP \Lambda_{\boldsymbol{\rho}}$.
\end{theorem}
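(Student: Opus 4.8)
The plan is to study $\PP\Lambda_{\boldsymbol{\rho}}$ through the theory of geodesic currents on the once-holed torus, reducing the theorem to a statement in plane convex geometry. Write $\pi_1(S)=F_2$, recall that its (nonperipheral) simple closed curves are indexed by slopes in $\mathbb{Q}\cup\{\infty\}$, which form a dense subset of $\PP\mathcal{ML}(S)\cong S^1$, and that each length function $\ell_{\rho_i}$ extends to a continuous linear functional $\mu\mapsto i(\mu,L_i)$ on the space $\mathcal{C}(S)$ of geodesic currents, where $L_i$ is the Liouville current of the hyperbolic structure $X_i$ with holonomy $\rho_i$. Since weighted closed geodesics are dense in $\mathcal{C}(S)$, we have $\Lambda_{\boldsymbol{\rho}}=\overline{\Phi(\mathcal{C}(S))}$, where $\Phi(\mu)=(i(\mu,L_1),i(\mu,L_2),i(\mu,L_3))$. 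The first task is to choose the $X_i$ so that in fact $\Lambda_{\boldsymbol{\rho}}$ is the closed convex cone spanned by $\Phi(\mathcal{ML}(S))$ --- i.e. no non-simple curve reaches its boundary --- and the second is to arrange that the induced map $\Phi\colon S^1\to\PP^2$ parametrizes a strictly convex embedded curve.

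For the first task the key is a certificate attached to each slope $\gamma$: a vector $(a,b,c)\in\RR^3\setminus\{0\}$, with one coordinate of sign opposite to the other two (the roles of the indices possibly permuted), such that $a\,i(\mu,L_1)+b\,i(\mu,L_2)+c\,i(\mu,L_3)\geq 0$ for every $\mu\in\mathcal{C}(S)$, with equality precisely on $\RR_{\geq 0}\gamma$. Such an inequality says that $\gamma$ is the unique maximizer, among all geodesic currents, of a normalized length ratio of the form $\mu\mapsto\big(\alpha\,\ell_{\rho_1}(\mu)+\beta\,\ell_{\rho_2}(\mu)\big)/\ell_{\rho_3}(\mu)$ with $\alpha,\beta\geq 0$; this is exactly the kind of extremal problem governed by Thurston's theory of the asymmetric metric, and it is the situation to which the ray-certification technique developed here applies. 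Producing $(a,b,c)$ amounts to showing that the \emph{stretch locus} of this extremal problem --- a priori a geodesic lamination --- is the simple closed curve $\gamma$ and nothing else. The existence of one such certificate makes $\Phi(\gamma)$ an extremal ray; because on the once-holed torus the associated supporting line can be taken to meet $\PP\Lambda_{\boldsymbol{\rho}}$ only at $\Phi(\gamma)$, the point is moreover exposed, and it is \emph{conical} in the strong sense that the ray is realized exactly by $\gamma$ (and its powers), not merely approached.

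For the second task, I would take $X_1,X_2,X_3$ to be small perturbations of a common hyperbolic structure $X_0$ along tangent directions $W_1,W_2,W_3$, suitably chosen (for instance, using the order-three symmetry of the once-holed torus and the corresponding fixed point in Teichm\"uller space, one can make the whole configuration $\ZZ/3$-symmetric). To first order, after recentring at $[1{:}1{:}1]$ and rescaling, $\PP\Lambda_{\boldsymbol{\rho}}$ converges to the planar region bounded by the curve $\mu\mapsto\big(d\log\ell_\mu(W_2-W_1),\,d\log\ell_\mu(W_3-W_1)\big)$ on $S^1=\PP\mathcal{ML}(S)$, the logarithmic derivatives being given by Wolpert-type cosine formulas; one checks that for a suitable choice of the $W_i$ this limiting curve is strictly convex and embedded, and its strict convexity is quantitative, so the same holds for $\Phi(S^1)$ once the perturbation is small enough. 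The same perturbative picture should show that, as $\gamma$ ranges over all slopes, the extremal problem of the previous paragraph is realized exactly by $\gamma$. Granting all this, $\Lambda_{\boldsymbol{\rho}}$ is the closed convex cone over the strictly convex embedded curve $\Phi(S^1)$; every simple closed curve defines an extremal, exposed, conical boundary point of $\PP\Lambda_{\boldsymbol{\rho}}$; and since the slopes are dense in $S^1$ and $\Phi$ is continuous and injective on $S^1$, these points are dense in $\partial\PP\Lambda_{\boldsymbol{\rho}}$, whence also $\PP\Lambda_{\boldsymbol{\rho}}$ is strictly convex (a dense set of its boundary points being extremal).

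The main obstacle is the step ruling out non-simple curves, uniformly over all slopes $\gamma$: one must show that neither an irrational measured lamination nor --- crucially --- a non-simple closed geodesic $\delta$ can have $\Phi(\delta)$ outside the convex cone over $\Phi(\mathcal{ML}(S))$, equivalently that the certificates are non-negative on all of $\mathcal{C}(S)$ and not merely on $\mathcal{ML}(S)$. In the perturbative regime this reduces to showing that the twisted Birkhoff-type averages $\ell_{X_0}(\mu)^{-1}\sum_{p\in\mu\cap\beta}\cos\theta_p$ are most extreme on simple laminations, which should follow from the structure of geodesic currents on the once-holed torus --- their asymptotic concentration on their laminar part --- together with an explicit trace-polynomial estimate in Fricke coordinates. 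The remaining ingredients --- the convexity bookkeeping, the density of the extremal rays, and the input from the theory of the Thurston metric on the once-holed torus --- are comparatively routine.
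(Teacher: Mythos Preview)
Your outline has real gaps, the most serious being the step you yourself flag as ``the main obstacle'': showing that no non-simple current reaches the boundary of the cone over $\Phi(\mathcal{ML}(S))$. You say this ``should follow from the structure of geodesic currents on the once-holed torus \ldots\ together with an explicit trace-polynomial estimate in Fricke coordinates'', but that is not an argument; it is precisely the hard content of the theorem. The paper handles this step by a general criterion (Theorem~\ref{thm:azimut-limit-cone}): any \emph{azimuthal} supporting hyperplane to the simple hull $\Lambda_{\bsrho}^s$ is automatically supporting for the full limit cone $\Lambda_{\bsrho}$. The proof is a surgery/record-breaker argument (Section~\ref{sec:simplecur}): if a sequence of curves approaches an azimuthal face and the limiting current self-intersects, then resolving a well-chosen self-crossing produces a shorter curve that beats the record, a contradiction. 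Azimuthality enters through a convexity estimate on log-cross-ratios (Lemma~\ref{lem:Omega-non-pos}) and cannot be dropped. So the actual work is to engineer $\bsrho$ so that \emph{every} supporting hyperplane to $\Lambda_{\bsrho}^s$ is azimuthal; your certificate vectors with ``one coordinate of sign opposite to the other two'' are exactly the azimuthal ones, but you have not shown they suffice.

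Your perturbative regime (three structures close to a common $X_0$) is the opposite of the paper's. The paper takes structures with \emph{very large} lengths $2a,2b,2b$ (thin isosceles triangles in~$\HH^2$), cyclically permuted among the three factors, and uses the Markoff-tree recursion together with an exact exponential slack estimate (Proposition~\ref{prop:honey}) to show that the polygonal approximations $\mathscr{L}^k$ to $\partial\PP\Lambda_{\bsrho}^s$ converge from inside, remain convex, and retain a positive exterior angle at every rational slope (Propositions~\ref{prop:toycone} and~\ref{prop:homoclinic}). In your regime the infinitesimal limit is a two-dimensional projection of Thurston's Lipschitz co-norm ball; you assert without proof that it can be made strictly convex, and even if it can, strict convexity is not an open condition at the boundary, so its persistence under perturbation needs an argument you do not supply. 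Finally, ``conical'' in the statement means that the extremal point has \emph{nonunique} supporting hyperplane (an angular corner); it does not mean ``realized exactly by~$\gamma$''. Your certificate argument yields exposedness, not conicality; the paper obtains conicality from the quantitative angle estimate~\eqref{eq:smallangle2}.
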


\begin{remark}
In independent and simultaneous work, in the spirit of Theorem~\ref{thm:N-gon}, Alexander Nolte~\cite{nolte} also found a construction of finite-sided limit cones for multi-Fuchsian representations.
In particular, he showed that for $g\geq 2$, if $\Pi$ is any convex polyhedron of $\PP(\RR^d_{\geq 0})$ with at most $3g-3$ sides, and if $\Pi$ contains $[1:\dots:1]$ in its interior, then there is a multi-Fuchsian representation $\boldsymbol{\rho}:\pi_1(S_g) \to (\PSL_2\RR)^d$ with projectivized limit cone~$\Pi$.
\end{remark}

Theorem~\ref{thm:N-gon} can also be used to construct multi-Fuchsian representations $\boldsymbol{\rho} : \pi_1(S_g) \to (\PSL_2\RR)^3$ whose projectivized limit cone is a finite polygon with fewer sides: see Remark~\ref{rem:number-sides}.

\begin{remark}
The strictly convex limit cones of Theorem~\ref{thm:fishy} are reminiscent of the ``fish'' shapes identified by Bousch~\cite{bou00} and others in the context of ergodic optimization and the Hunt--Ott phenomenon~\cite{hunt-ott}: see \cite{boc-icm, jen19} for surveys.
\end{remark}

%%%%%%%%%%%%%%%%%%%%%%%%%
\subsection{Finding supporting hyperplanes to the limit cone} \label{subsec:intro-simple-lim-cone}

Our constructions for Theorems \ref{thm:N-gon} and~\ref{thm:fishy} both rely on a criterion (Theorem~\ref{thm:azimut-limit-cone}) that forces certain supporting hyperplanes to the limit cone (which we call \emph{azimuthal}) to be approached by the Jordan projections of \emph{simple} closed curves. 

More precisely, we introduce the following closed convex subcone of the limit cone.

\begin{definition} \label{def:simple-hull}
Let $S$ be a finite-type surface of negative Euler characteristic and $\bsrho : \pi_1(S) \to G = (\PSL_2 \RR)^d$ a representation.
The \emph{simple hull $\Lambda^s_{\bsrho}$ of~$\bsrho$ in $\mathfrak{a}^+$} is the closure in~$\mathfrak{a}^+$ of the convex hull of the cone spanned by $\bslambda_{\bsrho}(\pi_1(S)^s)$, where $\pi_1(S)^s$ is the set of elements of $\pi_1(S)$ corresponding to simple closed curves on~$S$.
\end{definition}

The simple hull $\Lambda^s_{\bsrho}$ is a closed convex subcone of~$\aaa^+$; it is contained in the limit set $\Lambda_{\bsrho}$ since $\Lambda_{\bsrho}$ is itself a closed convex subcone of~$\aaa^+$ (see Lemma~\ref{lem:conv-lim-cone}).

\begin{definition} \label{def:azim}
A closed half-space in $\mathfrak{a} \simeq \RR^d$ is called \emph{azimuthal} if it contains all but one corner rays of the closed Weyl chamber $\mathfrak{a}^+ \simeq \RR^d_{\geq 0}$, and its interior contains the central ray $\RR_{\geq 0} (1,\dots,1)$. We also call a co-oriented hyperplane in $\mathfrak a$, or a linear form on $\mathfrak a$, azimuthal if its associated closed positive half-space is azimuthal: see Figure~\ref{fig:azimuthal}.
\end{definition}
\begin{figure}[h!]
\captionsetup{width=0.9\linewidth}
\labellist
\small\hair 2pt
\pinlabel {$[1:0:0]$} [c] at        -0.5   0.5
\pinlabel {$[0:1:0]$} [c] at        3.9    5.75
\pinlabel {$[0:0:1]$} [c] at        6.8    0.5
\pinlabel {$[1:1:1]$} [c] at        3.15   2.6
\endlabellist
\includegraphics[width = 7cm]{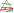}
\caption{Some projectivized co-oriented hyperplanes in $\PP(\RR^3_{\geq 0})$: we shade off their negative sides. Green: azimuthal. Red: non-azimuthal.}
\label{fig:azimuthal}
\end{figure}

By convention, a supporting hyperplane to some convex cone $\mathcal C$ of $\mathfrak{a} \simeq \RR^d$ is co-oriented in the direction of~$\mathcal C$.
It therefore makes sense to ask whether a supporting hyperplane is azimuthal or whether an azimuthal co-oriented hyperplane is supporting.

We prove the following.

\begin{theorem} \label{thm:azimut-limit-cone}
For $d\geq 2$, let $S$ be a finite-type surface of negative Euler characteristic and $\bsrho = (\rho_1, \ldots, \rho_d): \pi_1(S) \to G = (\PSL_2 \RR)^d$ a multi-Fuchsian representation with no cusps, with Zariski-dense image in~$G$.
Then an azimuthal co-oriented hyperplane is supporting for~$\Lambda_{\bsrho}$ if and only if it is supporting for~$\Lambda_{\bsrho}^s$.
\end{theorem}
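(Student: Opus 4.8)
The plan is to prove the nontrivial direction: if an azimuthal co-oriented hyperplane $H$ is supporting for $\Lambda_{\bsrho}^s$, then it is supporting for $\Lambda_{\bsrho}$. The reverse implication is immediate since $\Lambda_{\bsrho}^s \subseteq \Lambda_{\bsrho}$: any co-oriented hyperplane that supports the larger cone and passes through a point of the smaller subcone automatically supports the smaller one. So assume $\varphi$ is an azimuthal linear form on $\mathfrak a$ with $\varphi \geq 0$ on $\Lambda_{\bsrho}^s$ and $\varphi$ vanishing somewhere on $\partial \Lambda_{\bsrho}^s$; I want $\varphi \geq 0$ on $\Lambda_{\bsrho}$, i.e.\ $\varphi(\bslambda_{\bsrho}(\gamma)) \geq 0$ for \emph{every} $\gamma \in \pi_1(S)$, not just simple $\gamma$. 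Writing $\varphi = (a_1, \dots, a_d)$, the azimuthal hypothesis says exactly that all but one of the $a_i$ are $\geq 0$ and $\sum_i a_i > 0$ (the central ray is in the open positive half-space), so after permuting coordinates we may assume $a_1 < 0 \leq a_2, \dots, a_d$ with $a_1 + \dots + a_d > 0$. The statement to prove becomes: for every closed curve $\gamma$,
\[
  |a_1|\,\lambda_{\rho_1}(\gamma) \;\leq\; a_2\,\lambda_{\rho_2}(\gamma) + \dots + a_d\,\lambda_{\rho_d}(\gamma),
\]
knowing it holds for simple $\gamma$. This is a statement about comparing the length of $\rho_1(\gamma)$ in one hyperbolic metric against a weighted combination of lengths in the others.

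The key tool is Thurston's theory of the asymmetric (stretch) metric, which enters exactly because azimuthal inequalities are ``stretch-type'' inequalities: bounding one length from above by others. The core mechanism should be that the worst-case ratio in a stretch inequality is realized (or approached) along a measured lamination, and on a hyperbolic surface the extremal stretch behavior is governed by the \emph{chain-recurrent} laminations, whose extreme points are simple closed geodesics (this is the classical fact, going back to Thurston's stretch-map paper, that the stretch locus / maximal ratio of the stretch map between two hyperbolic structures is a chain-recurrent geodesic lamination, and that such laminations are limits of simple closed curves with nonnegative weights). Concretely, I would: (i) fix $\gamma$ and consider the geodesic representative $\gamma_1$ of $\rho_1(\gamma)$ in the first metric; (ii) approximate the (possibly non-simple) closed curve $\gamma$, or rather the measured geodesic current it defines, by a sequence of \emph{weighted simple} multicurves $\mu_n \to [\gamma]$ in the space of geodesic currents (this is where one uses density of weighted simple closed curves in the cone of currents, or in the subspace relevant to stretch inequalities); (iii) pass to the limit in the simple inequality $|a_1| \ell_{\rho_1}(\mu_n) \leq \sum_{i\geq 2} a_i \ell_{\rho_i}(\mu_n)$, using continuity of the length functions $\ell_{\rho_i}$ on the space of geodesic currents (Bonahon), to conclude $|a_1| \lambda_{\rho_1}(\gamma) \leq \sum_{i \geq 2} a_i \lambda_{\rho_i}(\gamma)$. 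Homogeneity lets us replace currents by projective classes throughout, and the cone spanned by $\bslambda_{\bsrho}$ of all currents has the same closure as $\Lambda_{\bsrho}$, since closed curves are dense in currents.

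The main obstacle — and the step that requires the Zariski-density hypothesis and genuine care rather than soft topology — is step (ii): a single non-simple closed curve $\gamma$ is \emph{not} a limit of simple multicurves in the space of currents (e.g.\ self-intersection number is lower semicontinuous and positive for $\gamma$), so one cannot literally approximate $[\gamma]$ by simple data. The correct resolution is to argue at the level of the supporting hyperplane rather than curve by curve: suppose for contradiction that $\varphi$ is not supporting for $\Lambda_{\bsrho}$, so $\varphi(\bslambda_{\bsrho}(\gamma_0)) < 0$ for some $\gamma_0$; then the (projectivized) minimum of $\varphi$ over all geodesic currents is achieved, by compactness, at some current $\nu$ with $\varphi(\ell(\nu)) < 0$. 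The claim is that such a \emph{minimizing} current must be chain-recurrent — intuitively, any non-recurrent piece of $\nu$ can be ``spent'' to decrease $\varphi$ further, using that $a_1 < 0$ means lengthening in the first metric is favorable and a non-recurrent arc can be rerouted/closed up along a recurrent sublamination — and chain-recurrent laminations lie in the closed convex cone generated by simple closed curves, contradicting $\varphi \geq 0$ on $\Lambda_{\bsrho}^s$. Making the phrase ``minimizing currents are chain-recurrent'' precise for a general linear form $\varphi$ (as opposed to a ratio of two lengths, which is the classical case) is the technical heart of the argument; I expect it to follow from a surgery/rerouting argument on geodesic currents in the spirit of Thurston's and Bonahon's work, together with the fact — which may be what forces Zariski-density — that the $d$ metrics $\rho_i$ are ``generic enough'' that the minimizing configuration is not degenerate. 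The write-up will isolate this as a lemma, prove it by the current-surgery argument, and then the theorem follows by the compactness-plus-contradiction scheme above combined with the trivial reverse inclusion.
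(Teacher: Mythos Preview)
You have the directions backwards. The implication you dismiss as ``immediate'' --- namely, that an azimuthal hyperplane supporting $\Lambda_{\bsrho}$ also supports $\Lambda_{\bsrho}^s$ --- is the hard one. If $\Omega\geq 0$ on $\Lambda_{\bsrho}$ and $\Omega$ vanishes somewhere on $\partial\Lambda_{\bsrho}$, it certainly restricts to $\Omega\geq 0$ on the subcone $\Lambda_{\bsrho}^s$, but there is no reason at all for the touching point to lie in $\Lambda_{\bsrho}^s$: a priori the infimum of the normalized $\Omega$ over $\Lambda_{\bsrho}$ could be realized only by (limits of) non-simple curves. Your parenthetical ``and passes through a point of the smaller subcone'' is precisely what must be proved. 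This is the content of the paper's Theorem~\ref{thm:azimut-limit-cone-switched}, which takes a record-breaking sequence $\gamma_n$ approaching $\{\Omega=0\}$, passes to a limiting current $\mu$, and shows $\mu$ cannot self-intersect. Conversely, the direction you call nontrivial is deduced in the paper in a few lines from Theorem~\ref{thm:azimut-limit-cone-switched} via a perturbation $\Omega_t=\Omega+te_1^*$.

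Your surgery sketch does gesture at the right mechanism, but two specific ingredients are missing and not recoverable from what you wrote. First, the correct claim is that the minimizing current is a \emph{measured lamination} (no self-intersection), not that it is ``chain-recurrent''; these are different notions, and the passage to simple closed curves uses the former. Second, and more seriously, the surgery step requires a quantitative input that uses azimuthality in an essential way: when you resolve a crossing in one of the two possible ways, the length vector drops by a ``slack'' vector $\boldsymbol{X}$ or $\boldsymbol{X}'$ (see \eqref{eqn:slack-angle}), and you need that at least one of $\Omega(\boldsymbol{X})$, $\Omega(\boldsymbol{X}')$ is bounded below by a positive multiple of the corresponding slack in the $\rho_{i_0}$-metric. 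This is Lemma~\ref{lem:Omega-non-pos}, proved by a concavity-of-$\log$ argument that hinges on there being exactly one negative coefficient in $\Omega$; it is the place where azimuthality (not Zariski-density) actually does the work. Your heuristic ``$a_1<0$ means lengthening in the first metric is favorable'' points the wrong way: resolving a crossing \emph{shortens} every coordinate, and the issue is to control the sign of the linear combination of these drops.
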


Theorem~\ref{thm:azimut-limit-cone} generalizes a theorem of Thurston \cite{thu-stretch-maps}, corresponding to the case $d = 2$ and $S$ closed.
In this case, as discussed above, the projectivized limit cone $\PP \Lambda_{\bsrho}$ identifies with the closure of the set $\{\lambda_{\rho_2}(\gamma)/\lambda_{\rho_1}(\gamma) ~|~\gamma \in \pi_1(S)\smallsetminus\{1\}\}$ of geodesic length ratios, which is an interval $[\alpha, \beta]$ containing $1$ in its interior.
Thurston showed that the extreme ratio $\beta$ is also the best Lipschitz constant for maps between the two hyperbolic structures in the correct homotopy class, and that the maximal stretching of an optimal Lipschitz map is realized along a \emph{geodesic lamination}. 
In particular, the endpoints of the interval $[\alpha, \beta]$ are approached by the length ratios of simple loops on~$S$.
By \cite[Th.\,1.3]{gk17}, this still holds when $S$ is not closed, as long as $(\rho_1,\rho_2)$ is multi-Fuchsian (possibly with cusps) and $1\in (\alpha, \beta)$; however, it can fail when $1\notin [\alpha,\beta]$ (so that one of the two supporting lines to $\Lambda_{\bsrho}$ in $\mathfrak{a} \simeq \RR^2$ is not azimuthal): see \cite[\S10.5]{gk17}.

We use Theorem~\ref{thm:azimut-limit-cone} to certify boundary points of the limit cone in the proofs of Theorems \ref{thm:N-gon} and~\ref{thm:fishy}. Indeed, it is a direct corollary that the limit cone $\Lambda_{\bsrho}$ and the simple hull $\Lambda_{\bsrho}^s$ coincide in the case that each point of the boundary of $\Lambda_{\bsrho}^s$ has an azimuthal supporting hyperplane. 
For certain surfaces of low complexity, for which the list of possible simple closed curves can be described exhaustively, we can directly arrange for $\Lambda_{\bsrho}^s$ to have azimuthal supporting hyperplanes and to be either finite-sided or strictly convex.

In general, computing the simple hull $\Lambda_{\bsrho}^s$ is a more tractable problem than computing the limit cone $\Lambda_{\bsrho}$ since the space of measured laminations is considerably smaller (finite dimensional) than the space of geodesic currents (infinite dimensional).
For a general multi-Fuchsian representation, $\Lambda_{\bsrho}$ will be larger than $\Lambda_{\bsrho}^s$, and without some tool like Theorem~\ref{thm:azimut-limit-cone} to reduce the complexity of the problem, it seems challenging to determine the shape of $\PP \Lambda_{\boldsymbol{\rho}}$ with precision or even to decide its finite-sidedness.

\begin{remark}
The finite-sided examples of Theorem~\ref{thm:N-gon} and~\cite{nolte} are produced by pinching some curves short. 
Although they are stable under small deformations, one should probably think of them as the ``exception''. 
For example, taking the $\rho_i$ infinitesimally close together and $d=1+\dim(\mathrm{Teich}(S))$, one may construct an infinitesimal limit cone $\PP \Lambda_\infty$ in $\mathrm{T}_{[1:...:1]} \PP(\RR^d_{\geq 0})$: this $\PP \Lambda_\infty$ is affinely equivalent to the closure $\mathcal{B}$ of the image of the map
$$\mathrm{d} \log \mathsf{length}: \pi_1(S) \longrightarrow \mathrm{T}^*_{[\rho_1]} \mathrm{Teich}(S)$$
which is at the heart of Thurston's work~\cite{thu-stretch-maps}.
Thurston showed that $\mathcal{B}$, also called the \emph{Lipschitz co-norm ball}, is convex, with images of simple closed curves dense in $\partial\mathcal{B}$, and an intricate (infinite) face structure (see \cite{pan23,hop25,bop}). 
Only certain low-dimensional projections of~$\mathcal{B}$, obtained by reducing $d$, will be finite-sided.
\end{remark}

In the same spirit, Theorems \ref{thm:N-gon} and~\ref{thm:fishy} should not be seen as a ``dichotomy'', as there may exist representations $\bsrho$ for which $\PP \Lambda_{\boldsymbol{\rho}}$ looks strictly convex in some regions but polyhedral in others.

%%%%%%%%%%%%%%%%%%%%%%%%%
\subsection{Discontinuous behavior of the limit cone}

Given a finitely generated group~$\Gamma$ and a semisimple Lie group~$G$, it is natural to wonder whether the limit cone $\Lambda_{\bsrho}$ varies continuously with $\bsrho\in\Hom(\Gamma,G)$.
It is straightforward to see that $\Lambda_{\bsrho}$ always varies lower semicontinuously: if a sequence $(\bsrho^{(n)})\in\Hom(\Gamma,G)^{\NN}$ converges to a representation~$\bsrho$, then any point of $\Lambda_{\bsrho}$ is a limit of points $x_n$ with $x_n\in\Lambda_{\bsrho^{(n)}}$ for all $n\in\NN$ (Remark~\ref{rem:lim-cone-lower-semicont}).
However, upper semicontinuity may fail, \ie $\Lambda_{\bsrho}$ may be smaller than any Hausdorff limit of the limit cones $\Lambda_{\bsrho^{(n)}}$.
An example of such behavior was given in \cite[\S\,10.6]{gk17}, for $G = (\PSL_2\RR)^2$: see Section~\ref{subsec:lim-cone-discont-d=2}.
Complementing this, in Section~\ref{subsec:lim-cone-discont-d=3} we use Theorem~\ref{thm:azimut-limit-cone} to give an example for $G = (\PSL_2\RR)^3$ where upper semicontinuity fails at a multi-Fuchsian representation $\bsrho = (\rho_1,\rho_2,\rho_3)$ with Zariski-dense image in~$G$, with $\rho_1$ and~$\rho_2$ convex cocompact but not~$\rho_3$ (Lemma~\ref{lem:lim-cone-discont-d=3}).
This gives an example of an Anosov representation of a Gromov hyperbolic group into a real semisimple Lie group, which has Zariski-dense image, and at which the limit cone does not vary continuously; it contrasts with a recent continuity result of Dey--Oh~\cite{do25} for Anosov representations, see Section~\ref{subsec:DeyOh}.

%%%%%%%%%%%%%%%%%%%%%%%%%
\subsection*{Acknowledgements}

We are grateful to the IH\'ES in Bures-sur-Yvette, to the IAS in~Prince\-ton, and to UT Austin for hosting us at various points during the completion of this project.
We thank Alex Nolte for discussion and for sharing on the advancement of his work.

%%%%%%%%%%%%%%%%%%%%%%%%%%%%%%%%%%%%%%%%%%%%%%%%%%%
\section{Preliminaries}

In the whole paper, $S$ denotes a finite-type orientable surface of negative Euler characteristic, with fundamental group $\pi_1(S)$.

%%%%%%%%%%%%%%%%%%%%%%%%%
\subsection{Reminders: geodesic currents}

Choose an arbitrary complete hyperbolic metric on~$S$ with no cusps.
This equips the unit tangent bundle $\mathrm{T}^1 S$ with a geodesic flow $\phi=(\phi_t)_{t\in \RR}$ and a reversal map $\epsilon:(x,v)\mapsto (x,-v)$.
By definition, a \emph{geodesic current} on~$S$ is a finite, $\phi$-invariant, $\epsilon$-invariant, Borel measure $\mu$ on $\mathrm{T}^1 S$.
We say that $\mu$ \emph{self-intersects} if its support in $\mathrm{T}^1 S$, seen modulo $\epsilon$, projects non-injectively to~$S$.
The space $\mathsf{Curr}(S)$ of all geodesic currents has the structure of a convex cone.
Positively-weighted closed geodesics form a subset of $\mathsf{Curr}(S)$ that is dense for the weak-$*$ topology.
The projectivization $\PP \mathsf{Curr}(S)$ is compact.
Examples of geodesic currents include the \emph{measured laminations} $\mathscr{ML}(S)$, which are those currents that do not self-intersect, \ie the limits of weighted \emph{simple} multicurves. 
The space $\mathscr{ML}(S)$ is much smaller than $\mathsf{Curr}(S)$; it is a finite-dimensional submanifold.

Topologically, the space $\mathsf{Curr}(S)$ does not depend on our choice of complete hyperbolic metric on~$S$ with no cusps: if $\texttt{g}_1$ and~$\texttt{g}_2$ are two such metrics, then there is a natural $1$-homogeneous homeomorphism between the set of geodesic currents defined with respect to~$\texttt{g}_1$ and the set of geodesic currents defined with respect to~$\texttt{g}_2$, taking the Lebesgue measure on a closed geodesic of $(S,\texttt{g}_1)$ to the Lebesgue measure on the unique closed geodesic of $(S,\texttt{g}_2)$ in the same homotopy class. 

However, determined by the choice of a complete hyperbolic metric on~$S$ with no cusps, each geodesic current $\mu$ has a well-defined \emph{length} $\int_{\mathrm{T}^1 S} 1\, \mathrm{d}\mu$; the length functional is $1$-homogeneous and continuously extends the lengths of closed geodesics with respect to the chosen hyperbolic metric.
In this way, given a multi-Fuchsian representation $\boldsymbol{\rho} = (\rho_1, \ldots, \rho_d): \pi_1(S) \to (\PSL_2 \RR)^d$ with no cusps and a current $\mu \in \mathsf{Curr}(S)$, we may talk about the Jordan projection $\boldsymbol{\lambda_\rho}(\mu) \in \mathfrak{a}^+ =\RR^d_{\geq 0}$.
This map $\boldsymbol{\lambda_\rho} : \mathsf{Curr}(S) \to \mathfrak{a}^+$ is continuous and $1$-homogeneous, and extends the map on closed geodesics induced by the conjugation-invariant map $\boldsymbol{\lambda_\rho} = \bslambda \circ \bsrho : \pi_1(S) \to \RR^d_{\geq 0}$ of Section~\ref{subsec:intro-results}. It also depends continuously on $\bsrho$.

Alternatively, following \cite{bon86}, it is possible to define geodesic currents without reference to a flow, directly as $\pi_1(S)$-invariant measures on
$$\mathscr G(\widetilde S) := \{(x,y)\in (\partial_\infty \pi_1(S))^2 ~|~ x\neq y\}/_{(x,y)\sim (y,x)}$$
(the space of unoriented lines of~$\widetilde{S}$) which are finite modulo $\pi_1(S)$.
In this formulation, the length of a current $\mu$ in a hyperbolic metric $\texttt{g}$ is obtained by integrating the natural product measure $d \mu \times  dt$ on $T^1S$, where $d t$ is the length along the flow direction determined by the metric $\texttt{g}$.
It is also natural to define the \emph{intersection number} $i(\mu, \mu')$ of two geodesic currents $\mu, \mu'$ as the integral of $d \mu\,d \mu'$ over the subset of $\mathscr G(\widetilde S) \times \mathscr G(\widetilde S)$ consisting of pairs of crossing lines in $\widetilde S$.

%%%%%%%%%%%%%%%%%%%%%%%%%
\subsection{Basic facts about multi-Fuchsian representations} \label{subsec:Z-dense-multi-Fuchsian}

The following statement is well known to experts, but we could not find it in the literature for general $d\geq 2$, so we include it here for the reader's convenience.
See \cite{dk00} for $d=2$.

\begin{lemma} \label{lem:Goursat}
Let $\bsrho = (\rho_1, \ldots, \rho_d): \pi_1(S) \to G = (\PSL_2 \RR)^d$ be a multi-Fuchsian representation.
Then the image of $\bsrho$ is Zariski-dense in~$G$ if and only if $\rho_1,\dots,\rho_d$ are pairwise distinct modulo the action of $\PGL_2 \RR$ on $\Hom(\pi_1(S),\PSL_2 \RR)$ by conjugation at the target.
\end{lemma}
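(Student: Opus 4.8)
The plan is to exploit Goursat's lemma, which classifies the subgroups of a product $G_1 \times \cdots \times G_d$ up to the subtlety of graph subgroups coming from isomorphisms between factors (here, up to isogeny, between copies of $\PSL_2 \RR$). One direction is easy: if some $\rho_i$ and $\rho_j$ are conjugate by an element $h \in \PGL_2\RR$, meaning $\rho_j = h\,\rho_i\,h^{-1}$, then the image of $\bsrho$ is contained in the proper algebraic subgroup of $G$ cut out by the relation $g_j = h g_i h^{-1}$ (a graph of an automorphism of $\PSL_2 \RR$), so $\bsrho$ is not Zariski dense.

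For the converse, suppose $\bsrho(\pi_1(S))$ is not Zariski dense; I want to produce a conjugacy among the $\rho_i$. Let $H$ be the Zariski closure of $\bsrho(\pi_1(S))$ in $G = (\PSL_2\RR)^d$. First I would observe that each projection $\pr_i(H)$ is Zariski dense in $\PSL_2 \RR$: indeed $\pr_i(H)$ contains $\rho_i(\pi_1(S))$, which is convex cocompact and nonelementary (as $S$ has negative Euler characteristic), hence Zariski dense in $\PSL_2\RR$. Now I proceed by induction on $d$, using Goursat's lemma in the form: a Zariski-closed subgroup $H \leq G_1 \times G_2$ surjecting (Zariski-densely) onto each factor, with both factors equal to the simple group $\mathrm{PSL}_2$, is either all of $G_1 \times G_2$ or the graph of an algebraic isomorphism $G_1 \to G_2$. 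Concretely, consider the two normal subgroups $N_1 = H \cap (\PSL_2\RR \times \{1\})$ and $N_2 = H \cap (\{1\} \times \PSL_2\RR)$; since $\pr_i(H)$ is Zariski dense, $N_i$ is normal in $\PSL_2\RR$, hence (being an algebraic subgroup of a group whose only proper normal algebraic subgroup is trivial) either trivial or everything. If one $N_i$ is everything, $H$ surjects; if both are trivial, $H$ is the graph of an isomorphism $\PSL_2\RR \to \PSL_2\RR$, which over $\RR$ is conjugation by an element of $\PGL_2\RR$, giving exactly a conjugacy $\rho_2 = h \rho_1 h^{-1}$.

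To handle general $d$: if $H \neq G$, then for some index $i$ the image of $H$ under the projection to the $i$-th factor together with the remaining factors is a proper subgroup, and more usefully, by the $d=2$ case applied to the projection $\pr_{\{i,j\}}(H) \leq \PSL_2\RR \times \PSL_2\RR$, either this projection is onto for every pair $\{i,j\}$ or some pair is a graph. I would argue that if $\pr_{\{i,j\}}(H)$ is onto for all pairs $i \neq j$, then $H = G$: this is the standard fact that a subgroup of a product of simple (or more generally, perfect centerless) groups which surjects onto every pair of factors is the whole product (proved by induction, since a subgroup surjecting onto $G_1 \times \cdots \times G_{d-1}$ and onto $G_1 \times G_d$ must, by another application of Goursat to $(G_1 \times \cdots \times G_{d-1}) \times G_d$, be the whole product unless $G_d$ is identified with a quotient of $G_1 \times \cdots \times G_{d-1}$ compatibly, which a pairwise-surjectivity argument rules out). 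Hence $H \subsetneq G$ forces some pair $\pr_{\{i,j\}}(H)$ to be a proper graph, i.e.\ $\rho_i$ and $\rho_j$ conjugate in $\PGL_2\RR$, contradicting pairwise non-conjugacy.

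The main obstacle I anticipate is the bookkeeping in the general-$d$ step: making precise the "surjects onto every pair implies surjects onto the product" lemma for algebraic subgroups of $(\PSL_2\RR)^d$, and being careful that the relevant isomorphisms of $\PSL_2\RR$ are all inner (realized by $\PGL_2\RR$-conjugation) — this uses that $\mathrm{PSL}_2$ has no outer algebraic automorphisms over $\RR$, or equivalently $\Aut(\PSL_2\RR) \cong \PGL_2\RR$. Everything else (Zariski density of each $\rho_i(\pi_1 S)$, the triviality dichotomy for normal algebraic subgroups of $\PSL_2\RR$) is standard.
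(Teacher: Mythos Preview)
Your proposal is correct and follows the same overall strategy as the paper: the easy direction via graph subgroups, Zariski density of each $\rho_i(\pi_1(S))$, and Goursat's lemma combined with the classification of normal algebraic subgroups of $(\PSL_2\RR)^k$ to force a conjugacy between two factors.

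The organization of the induction differs slightly. The paper peels off one factor at a time: writing $G=G'\times G''$ with $G'=(\PSL_2\RR)^d$ and $G''=\PSL_2\RR$, the induction hypothesis (applied to $\rho_1,\dots,\rho_d$, still pairwise non-conjugate) gives $\pr_{G'}(H)=G'$ directly, and then a single application of Goursat to $G'\times G''$ yields either $H=G$ or an isomorphism $f$ from one $\PSL_2\RR$-factor of $G'$ onto $G''$, hence a conjugacy $\rho_i\sim\rho_{d+1}$. Your route instead reduces to the pairwise projections $\pr_{\{i,j\}}(H)$ and invokes the auxiliary lemma ``surjects onto every pair of simple factors $\Rightarrow$ equals the full product''. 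Both are standard; the paper's version avoids proving that auxiliary lemma separately, while yours makes the role of the pairwise conjugacies more transparent. Your caution about $\Aut(\PSL_2\RR)\cong\PGL_2\RR$ is well placed (the paper uses this implicitly).
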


In this case, the limit cone $\Lambda_{\bsrho}$ is convex with nonempty interior, by \cite{ben97}.

\begin{proof}
If some of the $\rho_i$ are conjugate by elements of $\PGL_2 \RR$, then the image of $\bsrho$ cannot be Zariski-dense in~$G$.

Conversely, let us prove by induction on $d\geq 1$ that if $\rho_1,\dots,\rho_d \in \Hom(\pi_1(S),\PSL_2 \RR)$ are injective with discrete image, and pairwise nonconjugate by elements of $\PGL_2 \RR$, then the image of $\bsrho = (\rho_1,\dots,\rho_d) : \pi_1(S)\to G = (\PSL_2 \RR)^d$ is Zariski-dense in~$G$.

The case $d=1$ is classical.
Recall that any proper algebraic subgroup of $\PSL_2 \RR$ is contained in a conjugate of $\mathrm{PSO}(2)$ or a conjugate of the group of upper triangular matrices.
Since $\rho_1$ is injective with discrete image and $\pi_1(S)$ is not finite nor virtually cyclic (because $S$ has negative Euler characteristic), the group $\rho_1(\pi_1(S))$ cannot be contained in such a subgroup of $\PSL_2 \RR$.
Therefore $\rho_1(\pi_1(S))$ is Zariski dense in $\PSL_2 \RR$.

Suppose the property is true for~$d$, and let us prove it for $d+1$.
Let $\rho_1,\dots,\rho_{d+1} \in \Hom(\pi_1(S),\PSL_2 \RR)$ be injective with discrete image, and pairwise nonconjugate by elements of $\PGL_2 \RR$.
Let $H$ be the Zariski closure of $(\rho_1,\dots,\rho_{d+1})(\pi_1(S))$ in $G = (\PSL_2 \RR)^{d+1}$.
We write $G = G'\times G''$ where $G'$ is the product of the first $d$ factors of $(\PSL_2 \RR)^{d+1}$, and $G''$ the last factor.
Let $\mathrm{pr}_{G'} : G\to G'$ and $\mathrm{pr}_{G''} : G\to G''$ be the corresponding projections.
By induction, the image of $(\rho_1,\dots,\rho_d)$ is Zariski-dense in $(\PSL_2 \RR)^d$, hence $\mathrm{pr}_{G'}(H) = G'$, and the image of $\rho_{d+1}$ is Zariski-dense in $\PSL_2 \RR$, hence $\mathrm{pr}_{G''}(H) = G''$.
The kernel $N''$ of the restriction $(\mathrm{pr}_{G'})|_H$ is a normal algebraic subgroup of $G'' \simeq \PSL_2 \RR$, hence equal to $G''$ or trivial.
The kernel $N'$ of the restriction $(\mathrm{pr}_{G''})|_H$ is a normal algebraic subgroup of $G' = (\PSL_2 \RR)^d$, hence equal to a product of $\PSL_2 \RR$ factors, because each is simple with trivial center.
We can write $G' = N' \times P$ where $P$ is the product of the other $\PSL_2 \RR$ factors.
If $N'' = G''$, then $H = G'\times G'' = G$.
Suppose by contradiction that $N''$ is trivial.
By the Goursat lemma (see \eg \cite{pet09}), we have $H = N' \times \mathrm{graph}(f)$ where $f : P\to G''$ is an isomorphism.
In particular, $P$ is one factor of $G' = (\PSL_2 \RR)^d$, say the $i$-th one, and the corresponding representations $\rho_i$ and $\rho_{d+1}$ are conjugate: contradiction.
\end{proof}

\begin{lemma} \label{lem:conv-lim-cone}
Let $\bsrho = (\rho_1, \ldots, \rho_d): \pi_1(S) \to G = (\PSL_2 \RR)^d$ be any multi-Fuchsian representation.
Then the limit cone $\Lambda_{\bsrho}$ is convex.
\end{lemma}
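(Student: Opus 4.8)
The plan is to bypass Benoist's theorem \cite{ben97} --- which only yields convexity under Zariski-denseness, and by Lemma~\ref{lem:Goursat} that hypothesis may fail here (when some of the $\rho_i$ are pairwise conjugate) --- and instead argue directly with geodesic currents, exploiting the fact that the length functional attached to a hyperbolic metric is \emph{linear} in the current.

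First I would reformulate the limit cone in terms of currents. By definition $\Lambda_{\bsrho}$ is the closure of the cone $\RR_{\geq 0}\cdot\bslambda_{\bsrho}(\pi_1(S))$ spanned by the image of $\bslambda_{\bsrho}$. Each $\rho_i$ is the holonomy of a complete hyperbolic metric $\texttt{g}_i$ on~$S$, so (as recalled above) the coordinate $\lambda_{\rho_i}$ extends to the continuous, $1$-homogeneous length functional $\mathsf{Curr}(S)\to\RR_{\geq 0}$, $\mu\mapsto\int_{\mathrm{T}^1 S}1\,\mathrm{d}\mu$, which sends a weighted closed geodesic $t\gamma$ (with $t\geq 0$) to $t\,\lambda_{\rho_i}(\gamma)$; hence $\boldsymbol{\lambda_\rho}:\mathsf{Curr}(S)\to\mathfrak{a}^+$ is continuous, $1$-homogeneous, and sends $t\gamma$ to $t\,\bslambda_{\bsrho}(\gamma)$. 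The non-negatively weighted closed geodesics, together with the zero current, form a dense subset $D$ of $\mathsf{Curr}(S)$ with $\boldsymbol{\lambda_\rho}(D)=\RR_{\geq 0}\cdot\bslambda_{\bsrho}(\pi_1(S))$, so by continuity of $\boldsymbol{\lambda_\rho}$ one obtains
\[
\Lambda_{\bsrho}=\overline{\boldsymbol{\lambda_\rho}(D)}=\overline{\boldsymbol{\lambda_\rho}\bigl(\mathsf{Curr}(S)\bigr)}.
\]

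Next I would check that $\boldsymbol{\lambda_\rho}(\mathsf{Curr}(S))$ is already convex, so that its closure is too. On the one hand, $\mathsf{Curr}(S)$ is a set of non-negative measures, hence stable under addition and under multiplication by non-negative reals, \ie it is a convex cone (as recalled in the excerpt). On the other hand, $\boldsymbol{\lambda_\rho}$ is \emph{additive}: each of its coordinates $\mu\mapsto\int_{\mathrm{T}^1 S}1\,\mathrm{d}\mu$ is linear in the measure~$\mu$ (equivalently, in Bonahon's description, it is the integral of the product measure $\mathrm{d}\mu\times\mathrm{d}t$, manifestly linear in~$\mu$), so $\boldsymbol{\lambda_\rho}(\mu+\nu)=\boldsymbol{\lambda_\rho}(\mu)+\boldsymbol{\lambda_\rho}(\nu)$. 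Together with $1$-homogeneity this shows that $\boldsymbol{\lambda_\rho}(\mathsf{Curr}(S))$ is stable under addition and under non-negative scaling; any subset of $\RR^d$ with these two properties is a convex cone, since for $u=\boldsymbol{\lambda_\rho}(\mu)$ and $v=\boldsymbol{\lambda_\rho}(\nu)$ in the image and $t\in[0,1]$ we have $tu+(1-t)v=\boldsymbol{\lambda_\rho}(t\mu+(1-t)\nu)$ in the image. Since the closure of a convex set is convex, this finishes the argument.

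The argument is essentially soft, so I do not expect a serious obstacle; the only points requiring some care are (i) the density-plus-continuity reduction of the first step --- in particular that the geodesic-current formalism recalled above (density of weighted closed geodesics, continuity of the length functional) applies also when $S$ is non-compact --- and (ii) the fact that the length functional on $\mathsf{Curr}(S)$ is genuinely \emph{linear} in the measure, not merely $1$-homogeneous: this is exactly where the hypothesis that each $\rho_i$ is the holonomy of a hyperbolic structure enters, since it turns lengths into integrals of measures. (By contrast, $1$-homogeneity alone, \eg the identity $\lambda(\gamma^n)=n\lambda(\gamma)$, would not suffice to deduce additivity of $\boldsymbol{\lambda_\rho}$.)
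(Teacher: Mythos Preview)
Your argument is correct, and it takes a genuinely different route from the paper's own proof. The paper does \emph{not} bypass Benoist's theorem: instead it picks a maximal subset $\rho_{j_1},\dots,\rho_{j_n}$ of pairwise non-conjugate components, observes via Lemma~\ref{lem:Goursat} that the resulting representation $\bsrho'=(\rho_{j_1},\dots,\rho_{j_n})$ into $(\PSL_2\RR)^n$ is Zariski-dense, applies \cite{ben97} to get $\Lambda_{\bsrho'}$ convex, and then notes that $\Lambda_{\bsrho}$ is the image of $\Lambda_{\bsrho'}$ under the injective linear map $\RR^n\to\RR^d$ that copies coordinates according to the conjugacy pattern.

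Your approach trades Benoist's general theorem for the specific linearity of hyperbolic length on geodesic currents. This is arguably more elementary and more self-contained in the present setting (and it yields the useful by-product $\Lambda_{\bsrho}=\overline{\boldsymbol{\lambda_\rho}(\mathsf{Curr}(S))}$), but it is tied to the multi-Fuchsian hypothesis: it would not immediately carry over to representations into other semisimple groups where the Jordan projection is not an integral of a measure. The paper's approach, by contrast, reduces to a black box that works in any semisimple~$G$, at the cost of invoking a deeper result. Your caveat~(i) is well-placed: the density of weighted closed geodesics and the continuity of the length functional on $\mathsf{Curr}(S)$ in the non-compact case rest on the fact that, for a convex cocompact hyperbolic structure, all finite invariant measures are supported on the (compact) non-wandering set; the paper asserts these facts in its preliminaries, so within its framework your reduction goes through.
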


\begin{proof}
Choose integers $1\leq j_1 < \dots < j_n\leq d$ such that each $\rho_i$, for $1\leq i\leq d$, is conjugate by some element of $\PGL_2 \RR$ to $\rho_{j_{\ell_i}}$ for a unique integer $\ell_i \in \{1,\dots,n\}$, and $\ell_1,\dots,\ell_d$ take all possible values between $1$ and~$n$.
By Lemma~\ref{lem:Goursat}, the representation $\bsrho' := (\rho_{j_1},\dots,\rho_{j_n})$ has Zariski-dense image in $(\PSL_2 \RR)^n$, hence its limit cone $\Lambda_{\bsrho'}$ is convex with nonempty interior in $\RR^n_{\geq 0}$ by \cite{ben97}.
To conclude, we observe that the limit cone $\Lambda_{\bsrho}$ is the image of $\Lambda_{\bsrho'}$ by the injective linear map from $\RR^n$ to $\RR^d$ sending $(x_1,\dots,x_n)$ to $(x_{\ell_1},\dots,x_{\ell_d})$.
\end{proof}

%%%%%%%%%%%%%%%%%%%%%%%%%
\subsection{Asymptotic slack} \label{subsec:as-slack}

Recall that an embedding $\xi : \partial_{\infty} \pi_1(S) \to \PP^1\RR$ is called \emph{positive} if it sends any cyclically ordered tuple to a cyclically ordered tuple.
For instance, if $\rho : \pi_1(S)\to\PSL_2\RR$ is the holonomy of a complete hyperbolic metric on~$S$ with no cusps, then there is a (unique) $\rho$-equivariant continuous map $\xi : \partial_{\infty} \pi_1(S) \to \PP^1\RR$ sending the attracting fixed point of any $\gamma\in\Gamma\smallsetminus\{1\}$ to the attracting fixed point of $\rho(\gamma)$ in $\PP^1\RR$; this map, called the \emph{boundary map} of~$\rho$, is a positive embedding.

For any positive embedding $\xi : \partial_{\infty} \pi_1(S) \to \PP^1\RR$ and any four pairwise distinct points $x^-,y^-,x^+,y^+ \in \partial_{\infty} \pi_1(S)$, we set
\begin{equation} \label{eqn:slack-Gromov-bound}
\slack_{\xi}(x^-,y^-,x^+,y^+) := \log\, \big |[\xi(x^-) : \xi(y^-) : \xi(x^+) : \xi(y^+)] \big | \in \RR
\end{equation}
where $[\, \underline{~} : \underline{~} : \underline{~} : \underline{~}\,]$ is the usual cross ratio, normalized so that $[\infty:0:1:t] = t$.
In terms of the ordered pairs $\ell:=(x^-, x^+)$ and $\ell':=(y^-,y^+)$, also called (oriented) \emph{flow lines}, we will refer to
\begin{equation} \label{eqn:slack-Gromov-bound2}
\slack_\xi(\ell, \ell') := \slack_{\xi}(x^- ,y^-, x^+, y^+) \, \in \RR  
\end{equation}
as the \emph{asymptotic slack} of $\ell, \ell'$ with respect to~$\xi$.
We have $\slack_\xi(\ell, \ell')=\slack_\xi(\ell', \ell)=\slack_\xi(-\ell,-\ell')$ where the minus signs mean reversed orientations.
If $x^-, y^-, x^+, y^+$ are cyclically ordered in $\partial_{\infty} \pi_1(S)$, then the oriented geodesics $\xi(\ell):=(\xi(x^-), \xi(x^+))$ and $\xi(\ell'):=(\xi(y^-), \xi(y^+))$ in $\HH^2$ intersect at an angle $\theta \in (0,\pi)$ and
\begin{equation} \label{eqn:slack-angle}
\left \{ \begin{array}{lll}
\slack_{\xi}(\ell, \phantom{-}\ell') & \!\!\!=\!\!\! & - \log \cos^2(\theta/2) \ \geq 0, \\
\slack_{\xi}(\ell, -\ell') & \!\!\!=\!\!\! & - \log \sin^2(\theta/2) \ \,  \geq 0. \end{array} \right .
\end{equation}
We then say that $\ell, \ell'$ are \emph{crossing} flow lines.
The term ``slack'' is motivated by the fact that for any sequences $\mathsf{x}_n^\pm \to \xi(x^\pm)$ and $\mathsf{y}_n^\pm \to \xi(y^\pm)$ in $\HH^2$,  
\begin{equation} \label{eqn:slack-rope}
\slack_\xi(\ell, \ell')=\tfrac12  \lim_{n\to +\infty} 
\big( d(\mathsf{x}_n^-, \mathsf{x}_n^+)+d(\mathsf{y}_n^-, \mathsf{y}_n^+) - d(\mathsf{x}_n^-, \mathsf{y}_n^+) - d(\mathsf{y}_n^-, \mathsf{x}_n^+) \big),
\end{equation}
so $\slack_\xi(\ell, \ell')$ is the amount of ``slack'' saved by uncrossing the ``ropes'' $\xi(\ell), \xi(\ell')$.

When $\ell$ and $\ell'$ do not cross, we can also express $\slack_\xi(\ell, \ell')$ in terms of the length $L$ of the common perpendicular segment to $\xi(\ell)$ and $\xi(\ell')$ in $\HH^2$: if $\ell, \ell'$ are oriented consistently, \ie $x^-, y^-, y^+, x^+$ are cyclically ordered, then
\begin{equation*} %\label{eqn:slack-Hangle}
\left \{ \begin{array}{lll}
\slack_{\xi}(\ell, \phantom{-}\ell') & \!\!\!=\!\!\! & - \log \cosh^2(L/2) \quad \leq 0, \\
\slack_{\xi}(\ell, -\ell') & \!\!\!=\!\!\! & - \log \sinh^2(L/2). \end{array} \right .
\end{equation*}

%%%%%%%%%%%%%%%%%%%%%%%%%%%%%%%%%%%%%%%%%%%%%%%%%%%
\section{A simpleness criterion} \label{sec:simplecur}

In this section we prove Theorem~\ref{thm:azimut-limit-cone}.
The ``only if'' direction is paraphrased as follows (see Definitions \ref{def:multi-Fuchsian}, \ref{def:simple-hull}, and~\ref{def:azim}).

\begin{theorem} \label{thm:azimut-limit-cone-switched}
Let $S$ be a complete orientable hyperbolic surface of finite type.
For $d\geq 2$, let $\boldsymbol{\rho} = (\rho_1, \ldots, \rho_d): \pi_1(S) \to G = (\PSL_2 \RR)^d$ be a multi-Fuchsian representation with no cusps, with Zariski-dense image in~$G$.
Then any azimuthal supporting hyperplane to the limit cone $\Lambda_{\bsrho}$ meets the closure of $\bslambda_{\bsrho}(\pi_1(S)^s)$, hence the simple hull~$\Lambda_{\bsrho}^s$.
\end{theorem}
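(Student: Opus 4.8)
The plan is to argue by contradiction: suppose $\varphi$ is an azimuthal linear form on $\mathfrak a \simeq \RR^d$ whose associated hyperplane supports $\Lambda_{\bsrho}$, so $\varphi \geq 0$ on $\Lambda_{\bsrho}$ and $\varphi = 0$ somewhere on $\partial \Lambda_{\bsrho}$, but such that $\varphi$ is bounded below by some $\delta > 0$ on the unit sphere intersected with the closed cone spanned by $\bslambda_{\bsrho}(\pi_1(S)^s)$. Writing $\varphi(x_1,\dots,x_d) = \sum_i c_i x_i$, the azimuthal condition says exactly one $c_i$, say $c_1$, is negative and $\sum_i c_i > 0$ (since $\varphi$ is positive at $(1,\dots,1)$ and nonnegative at the corner rays except the first). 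Thus $\varphi(\bslambda_{\bsrho}(\gamma)) = \sum_{i\geq 2} c_i\, \lambda_{\rho_i}(\gamma) - |c_1|\,\lambda_{\rho_1}(\gamma)$; the goal is to produce a sequence of (not necessarily simple) closed curves $\gamma_n$ with $\varphi(\bslambda_{\bsrho}(\gamma_n)) / \|\bslambda_{\bsrho}(\gamma_n)\| \to 0$, contradicting that $\varphi$ is supporting (the infimum of $\varphi$ on the unit sphere of $\Lambda_{\bsrho}$ is $0$) — wait, that is automatic — so instead I must contradict the hypothesis that $\varphi$ stays $\geq \delta > 0$ on simple curves. The real content is therefore: the minimizing sequence for $\varphi/\|\cdot\|$ on $\Lambda_{\bsrho}$ can be taken to consist of \emph{simple} curves.

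First I would reformulate using the length pairing. Since the hyperbolic metric $\rho_1$ is fixed, normalize currents by $\rho_1$-length: on the compact set $\{\mu \in \mathsf{Curr}(S) : \lambda_{\rho_1}(\mu) = 1\}$ the continuous function $\mu \mapsto \varphi(\bslambda_{\bsrho}(\mu)) = \sum_{i\geq 2} c_i\,\lambda_{\rho_i}(\mu) - |c_1|$ attains its minimum $m$, and $m = 0$ because $\varphi$ supports $\Lambda_{\bsrho}$ and $\bslambda_{\bsrho}$ maps this compact set onto a slice of the cone whose closure contains $\partial\Lambda_{\bsrho}$ (here Zariski-density via Lemma \ref{lem:Goursat} ensures the cone is genuinely $d$-dimensional and the supporting face is approached by currents). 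So there is a current $\mu_0$ with $\lambda_{\rho_1}(\mu_0) = 1$ realizing $\varphi(\bslambda_{\bsrho}(\mu_0)) = 0$, i.e. $\mu_0$ is an extremizer. The heart of the proof is then an \textbf{uncrossing argument}: I claim an extremizing current $\mu_0$ must be a measured lamination, i.e. must not self-intersect. Suppose it does. Using the asymptotic slack from \eqref{eqn:slack-angle} and \eqref{eqn:slack-rope}, uncrossing two transverse flow lines $\ell, \ell'$ in the support of $\widetilde\mu_0$ strictly decreases $\lambda_{\rho_1}$ (the left side: $\slack_\xi$ is positive for crossing lines, and "uncrossing" shortens total length by $2\slack_\xi > 0$), while for each $i \geq 2$ it changes $\lambda_{\rho_i}$ by $-2\,\slack_{\xi_i}(\ell,\ell')$ or by $-2\,\slack_{\xi_i}(\ell,-\ell')$ depending on which of the two reconnections is chosen; since $\slack_{\xi_i} \geq 0$ for crossing lines and $\slack_{\xi_i} < 0$ for the non-crossing reconnection, one can pick the reconnection so as not to increase — actually to strictly decrease or leave controlled — each $\lambda_{\rho_i}$. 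The key inequality to extract is that the rate of decrease of $\lambda_{\rho_1}$ dominates, so that after renormalizing back to $\rho_1$-length $1$ the value of $\sum_{i\geq 2} c_i \lambda_{\rho_i}$ strictly drops below $|c_1|$, contradicting minimality of $m = 0$. Making this precise requires working with a measure-theoretic version of the surgery (an "averaged uncrossing" over the support of $\mu_0 \times \mu_0$ on crossing pairs, as in Bonahon's and Thurston's arguments), producing a nearby current $\mu_0'$ with $\varphi(\bslambda_{\bsrho}(\mu_0')) < 0$ after renormalization.

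Once $\mu_0$ is known to be a measured lamination, it is approximated in $\mathsf{Curr}(S)$ by weighted simple multicurves (density of weighted simple multicurves in $\mathscr{ML}(S)$), and by continuity and $1$-homogeneity of $\bslambda_{\bsrho}$ these give closed simple curves $\gamma_n$ with $\bslambda_{\bsrho}(\gamma_n)/\lambda_{\rho_1}(\gamma_n) \to \bslambda_{\bsrho}(\mu_0)$, hence $\varphi(\bslambda_{\bsrho}(\gamma_n))/\|\bslambda_{\bsrho}(\gamma_n)\| \to 0$. Thus the azimuthal supporting hyperplane $\{\varphi = 0\}$ meets the closure of $\bslambda_{\bsrho}(\pi_1(S)^s)$ — and therefore $\Lambda_{\bsrho}^s$, which contains that closure — giving the theorem. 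The main obstacle, I expect, is the uncrossing step: one must handle a self-intersecting current (not just a multicurve) measure-theoretically, verify the surgery is well-defined on the level of currents, track both that $\rho_1$-length decreases and that the $\rho_i$-lengths for $i\geq 2$ can be simultaneously kept from increasing (this is where the azimuthal condition — exactly one negative coefficient, and it is the $\rho_1$-coefficient — is used crucially, together with the freedom in choosing which of the two reconnections to perform, and possibly an averaging argument to balance the choices), and control the renormalization. This is precisely the technical core that generalizes Thurston's $d = 2$ argument, where there was only the single ratio $\lambda_{\rho_2}/\lambda_{\rho_1}$ to handle.
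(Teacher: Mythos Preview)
Your overall architecture---find an extremizing current for the azimuthal functional, then use an uncrossing argument to show it cannot self-intersect---is exactly the paper's, but your execution has a real gap at the uncrossing step, and the paper closes it with a device you have not identified.

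The gap is this. You propose to perform the surgery \emph{on the minimizing current $\mu_0$ itself}, via some ``measure-theoretic averaged uncrossing''. No such operation is available: a current is a measure on the space of geodesics, and there is no well-defined way to ``resolve a crossing'' at that level. The obvious fallback---approximate $\mu_0$ by closed geodesics $\gamma_n$ and surger those---does not by itself yield a contradiction either: a single surgery changes the normalized value $\Omega(\bslambda_{\bsrho}(\gamma'_n))/\lambda_{\rho_1}(\gamma'_n)$ by only $O(1/\lambda_{\rho_1}(\gamma_n))$, so in the limit you are back at $0$, not below it. The paper's fix is to choose the approximants $\gamma_n$ to be \emph{record breakers}: each $\gamma_n$ realizes the minimum of $\Omega(\bslambda_{\bsrho}(\beta))/\lambda_{\rho_1}(\beta)$ over all $\beta$ with $\lambda_{\rho_1}(\beta)\leq\lambda_{\rho_1}(\gamma_n)$. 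Then the surgered curve is strictly shorter in $\rho_1$ \emph{and} has strictly smaller ratio, contradicting the record directly at finite~$n$. This finitary trick is what makes the contradiction land.

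Two smaller points. First, your description of the slacks is off: when $\ell,\ell'$ cross, \emph{both} reconnections give nonnegative slack in every metric (see \eqref{eqn:slack-angle}), so every $\lambda_{\rho_i}$ decreases under either resolution---there is no ``non-crossing reconnection'' with negative slack to avoid. Second, and more substantively, the inequality you need is not that ``the decrease of $\lambda_{\rho_1}$ dominates'' but rather that, for at least one of the two resolutions, the vector $\boldsymbol{X}=(\slack_{\xi_i}(\ell,\pm\ell'))_i$ satisfies $\Omega(\boldsymbol{X})\geq c\,\slack_{\xi_1}(\ell,\pm\ell')$ with $c=\Omega(1,\dots,1)>0$. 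This is the paper's Lemma~\ref{lem:Omega-non-pos}, proved by a short concavity-of-$\log$ argument, and it is exactly where the azimuthal hypothesis (only one negative coefficient, and $\Omega(1,\dots,1)>0$) is used. Without isolating this lemma your sketch does not explain why either resolution actually improves the normalized functional.
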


We start with three lemmas.

%%%%%%%%%%%%%%%%%%%%%%%%%
\subsection{Azimuthality and slacks}

Recall the notation \eqref{eqn:slack-Gromov-bound}--\eqref{eqn:slack-Gromov-bound2}, and the notions of positive map and crossing flow lines from Section~\ref{subsec:as-slack}.

\begin{lemma} \label{lem:Omega-non-pos}
Let $\Omega = \sum_{i=1}^d c_i e_i^* \in \mathfrak{a}^*$ be azimuthal, with the closed halfspace $\{\Omega \geq 0\}$ containing all  corner rays of the Weyl chamber $\mathfrak{a}^+\simeq \RR^d_{\geq 0}$ but the $i_0$-th one. 
Then there exists $c>0$ such that for any positive maps $\xi_1,\dots,\xi_d : \partial_{\infty} \pi_1(S) \to \PP^1\RR$ and any crossing flow lines $\ell, \ell' \in (\partial_{\infty} \pi_1(S))^2$, the $\RR^d$-vectors $\boldsymbol{X} := (\slack_{\xi_i}(\ell, \ell'))_{1\leq i\leq d}$ and $\boldsymbol{X}' := (\slack_{\xi_i}(\ell, -\ell'))_{1\leq i\leq d}$ satisfy
\begin{equation} \label{eq:conveX}
\Omega(\boldsymbol{X}) \geq c \,\slack_{\xi_{i_0}}(\ell, \ell')\quad \text{or} \quad \Omega(\boldsymbol{X}') \geq c \,\slack_{\xi_{i_0}}(\ell, -\ell').
\end{equation}
\end{lemma}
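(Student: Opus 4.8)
The plan is to reduce the inequality \eqref{eq:conveX} to a one-line application of Jensen's inequality, exploiting the trigonometric identity implicit in \eqref{eqn:slack-angle}. First I would unpack the azimuthality hypothesis: writing $\Omega=\sum_i c_i e_i^*$, the requirement that the closed half-space $\{\Omega\ge 0\}$ contain every corner ray of $\mathfrak{a}^+$ except the $i_0$-th, and that its interior contain $\RR_{\ge 0}(1,\dots,1)$, translates exactly into $c_i\ge 0$ for all $i\ne i_0$, $c_{i_0}<0$, and $\sigma:=\sum_{i=1}^d c_i>0$. Set $m:=-c_{i_0}>0$ and $C:=\sum_{i\ne i_0}c_i=\sigma+m>0$. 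I claim the lemma holds with the uniform constant $c:=\sigma$, which depends only on $\Omega$ (not on the $\xi_i$ or on $\ell,\ell'$), as required.

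\textbf{Shape of the slacks.} Next I would record the form of the two vectors. Since each $\xi_i$ is a positive embedding and $\ell,\ell'$ are crossing flow lines, the points $x^-,y^-,x^+,y^+$ are cyclically ordered in $\partial_\infty\pi_1(S)$, hence so are their $\xi_i$-images in $\PP^1\RR$; thus $\xi_i(\ell)$ and $\xi_i(\ell')$ cross at some angle $\theta_i\in(0,\pi)$. By \eqref{eqn:slack-angle} the $i$-th components of $\boldsymbol{X}$ and $\boldsymbol{X}'$ are $X_i=-\log\cos^2(\theta_i/2)$ and $X'_i=-\log\sin^2(\theta_i/2)$, both lying in $(0,+\infty)$ since $\theta_i\in(0,\pi)$ is an open condition, and satisfying the key identity $e^{-X_i}+e^{-X'_i}=\cos^2(\theta_i/2)+\sin^2(\theta_i/2)=1$ for each $i$.

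\textbf{Main step.} Now I would argue by contradiction: suppose both inequalities in \eqref{eq:conveX} fail with $c=\sigma$, i.e. $\Omega(\boldsymbol{X})<\sigma X_{i_0}$ and $\Omega(\boldsymbol{X}')<\sigma X'_{i_0}$. Since $\Omega(\boldsymbol{X})=\sum_{i\ne i_0}c_i X_i-mX_{i_0}$, the first inequality rearranges to $\sum_{i\ne i_0}\tfrac{c_i}{C}X_i<X_{i_0}$, and similarly $\sum_{i\ne i_0}\tfrac{c_i}{C}X'_i<X'_{i_0}$. The weights $c_i/C$ ($i\ne i_0$) are nonnegative and sum to $1$, so Jensen's inequality for the convex function $t\mapsto e^{-t}$, together with the strict monotonicity of $e^{-t}$, yields
$$\sum_{i\ne i_0}\tfrac{c_i}{C}\,e^{-X_i}\ \ge\ \exp\!\Big(-\!\sum_{i\ne i_0}\tfrac{c_i}{C}X_i\Big)\ >\ e^{-X_{i_0}},$$
and the same with $X$ replaced by $X'$. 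Adding the two inequalities and using $e^{-X_i}+e^{-X'_i}=1$, the left-hand side collapses to $\sum_{i\ne i_0}c_i/C=1$ and the right-hand side to $e^{-X_{i_0}}+e^{-X'_{i_0}}=1$, giving $1>1$ -- a contradiction. Hence at least one of the inequalities \eqref{eq:conveX} holds with $c=\sigma$.

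\textbf{On the difficulty.} There is essentially no computational obstacle here, and no case analysis is needed. The one genuine idea -- what I would highlight rather than call a hard step -- is to add the two Jensen inequalities and to notice that the identity $e^{-X_i}+e^{-X'_i}=1$ is exactly what makes both sides telescope to $1$; everything else is bookkeeping about the sign pattern of the $c_i$ forced by azimuthality.
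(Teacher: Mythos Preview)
Your proof is correct and uses essentially the same idea as the paper's: both exploit the convexity of $t\mapsto e^{-t}$ (equivalently the concavity of $\log$) together with the complementarity identity $e^{-X_i}+e^{-X'_i}=1$, and both arrive at the same constant $c=\Omega(1,\dots,1)$. The only cosmetic difference is that the paper organizes the argument as a case split on whether the weighted average $\overline{\sigma}=\sum_{i\neq i_0}\tfrac{c_i}{C}e^{-X_i}$ lies above or below $e^{-X_{i_0}}$, whereas you argue by contradiction and add the two Jensen inequalities---your packaging is slightly slicker.
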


\begin{proof}
Up to permuting coordinates we may assume $i_0=1$, in other words $c_1<0\leq c_2, \dots, c_d$.
Since $\ell$ and~$\ell'$ are crossing flow lines, for any $1\leq i\leq d$ we can write $\slack_{\xi_i}(\ell, \ell') = -\log\sigma_i$ for some $\sigma_i \in (0,1)$; then $\slack_{\xi_i}(\ell, -\ell') = -\log(1-\sigma_i)$ by \eqref{eqn:slack-angle}.

Let $C:=\sum_{i\geq 2} c_i$ and $\overline{\sigma} := (\sum_{i\geq 2} c_i \sigma_i)/C$. 
We distinguish two cases: if $\overline{\sigma} \leq \sigma_1$, then
\begin{align*} \Omega(\boldsymbol{X}) & =  \textstyle -c_1  \log \sigma_{1} - \sum_{i \geq 2} c_i \log \sigma_i \\ 
& \geq  -c_1 \log \sigma_{1} - C \log\overline{\sigma} & \text{by concavity of $\log$} \\
& \geq - (c_1+C) \log \sigma_1 & \text{by monotonicity of $\log$} \\
& = (c_1+C)\, \slack_{\xi_{1}}(\ell, \ell').
\end{align*}
Similarly, if $\overline{\sigma} \geq \sigma_1$, then
\begin{align*} \Omega(\boldsymbol{X}') & \textstyle = -c_1 \log(1-\sigma_{1}) - \sum_{i \geq 2} c_i \log(1-\sigma_i) \\ 
& \geq -c_1 \log(1-\sigma_{1}) - C \log (1-\overline{\sigma}) & \phantom{AAAAAAAA}\\
& \geq - (c_1 + C) \log (1-\sigma_1) \\
& = (c_1 + C)\, \slack_{\xi_{1}}(\ell, -\ell').
\end{align*}
Hence \eqref{eq:conveX} holds with $c:=c_1+C=\Omega(1,\dots, 1)$, which is positive because $\Omega$ is azimuthal.
\end{proof}

%%%%%%%%%%%%%%%%%%%%%%%%%
\subsection{Surgeries on long curves and slacks} \label{subsec:prelim-hyperbolic}

Let $\rho : \pi_1(S) \to \PSL_2\RR$ be the holonomy representation of a complete hyperbolic structure on~$S$ with no cusps, and let $\xi : \partial_{\infty} \pi_1(S) \to \PP^1 \RR$ be the corresponding equivariant boundary map.
Consider an element $\gamma \in \pi_1(S)$ that has two conjugates whose images under~$\rho$ have oriented translation axes $\ell, \ell'$ in $\HH^2$ intersecting at an angle $\theta \in (0,\pi)$.
The intersection point $p$ corresponds to a point of self-crossing of the geodesic representative of the closed curve $[\gamma]$ on the surface $S$.
Resolving this crossing in one way yields one curve $[\gamma']$ and in the other way yields two curves $[\gamma''], [\gamma''']$ (see Figure~\ref{fig:resolution}).
The length $L = \lambda_{\rho}(\gamma) > 0$ of the geodesic representative of $[\gamma]$ splits as $L=L''+L'''$ before and after the self-crossing.
The following elementary estimate uses notation \eqref{eqn:slack-Gromov-bound}--\eqref{eqn:slack-Gromov-bound2}.

\begin{figure}[h!] 
\captionsetup{width=0.9\linewidth}
\labellist
\small\hair 2pt
% 		G R I D    F O R    P L A C I N G    L A B E L S
%\grille{0} \grille{2} \grille{4} \grille{6} \grille{8} \grille{10} \grille{12} \grille{14} \grille{16} \grille{18}
%\grille{20} \grille{22} \grille{24} \grille{26} \grille{28} \grille{30} \grille{32} \grille{34} \grille{36} \grille{38} 
%\grille{40} \grille{42} \grille{44} \grille{46} %\grille{48} \grille{50} \grille{52} \grille{54} \grille{56} \grille{58} 
%
\pinlabel {$\theta$} [c] at		5.4	5.9
\pinlabel {$[\gamma]$} [c] at	4.4	6.7
\pinlabel {$[\gamma']$} [c] at	5.3	7.8
\pinlabel {$[\gamma'']$} [c] at	3.2	4.2
\pinlabel {$[\gamma''']$} [c] at	8.2	4.2
\pinlabel {$L''$} [c] at			-0.1	4.2
\pinlabel {$L'''$} [c] at		12.1	4.2
\pinlabel {$\ell$} [c] at		4.3	2.3
\pinlabel {$\ell'$} [c] at		6.2	2.3
\pinlabel {$p$} [c] at			5.0	4.3
%%%%%%%%%%%%%%%%%%%%%%%%%
\endlabellist
\includegraphics[width = 8cm]{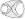}
\caption{Resolutions $[\gamma']$ and $[\gamma''] \cup [\gamma''']$ of a self-crossing of $[\gamma]$ in an immersed pair of pants in $\rho(\pi_1(S)) \backslash \mathbb{H}^2 \simeq S$}
\label{fig:resolution}
\end{figure}

\begin{lemma} \label{lem:thinpants}
With the above notation, for any $\varepsilon, \theta_0>0$, there exists $L_0>0$ such that if $\theta \in [\theta_0, \pi-\theta_0]$ and $L'', L''' \geq L_0$, then
\begin{align*}
\left |\lambda_{\rho}(\gamma'') + \lambda_{\rho}(\gamma''') - \left ( \lambda_{\rho}(\gamma) - 2\,\slack_{\xi}(\ell, \phantom{-}\ell') \right ) \right | & \leq \varepsilon; \\
\left | \lambda_{\rho}(\gamma') - \left (\lambda_{\rho}(\gamma) - 2\,\slack_{\xi}(\ell, -\ell') \right ) \right | & \leq \varepsilon.
\end{align*}
\end{lemma}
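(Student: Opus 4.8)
The plan is to reduce the statement to a computation in hyperbolic trigonometry by passing to the universal cover. Fix conjugates $\gamma_1,\gamma_2$ of $\gamma$ whose $\rho$-axes are $\ell,\ell'$, meeting at the point $p$ with angle $\theta$. The geodesic representative of $[\gamma]$ on $S$ decomposes at $p$ into two arcs of lengths $L''$ and $L'''$; lift this picture so that one lift of the closed geodesic passes through $p$ along $\ell$, travels distance $L'$ (one of $L'',L'''$) to a translate $g\cdot p$ of $p$, where $g\in\pi_1(S)$, and there switches to the translate $g\cdot\ell'$, and so on. The resolved curves $[\gamma']$ and $[\gamma'']\cup[\gamma''']$ correspond to products of the same group elements with the roles of $\ell$ and $\ell'$ swapped at $p$, i.e. to words obtained from $\gamma_1\gamma_2$ (or $\gamma_1$ alone) by interchanging a prefix and a suffix in the natural way dictated by Figure~\ref{fig:resolution}; concretely $[\gamma'']$ and $[\gamma''']$ are represented by conjugates of $\gamma_1$ and $\gamma_2$ themselves, while $[\gamma']$ is represented by a conjugate of $\gamma_1\gamma_2$ (or $\gamma_1\gamma_2^{-1}$, depending on the chosen orientations). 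So I would first set up this bookkeeping carefully, identifying which group element each resolved curve is.

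Next I would compute translation lengths via traces. Write $A,B\in\PSL_2\RR$ for the holonomies $\rho(\gamma_1),\rho(\gamma_2)$; then $\lambda_\rho(\gamma'')=\lambda(A)$, $\lambda_\rho(\gamma''')=\lambda(B)$, $\lambda_\rho(\gamma)=\lambda(\gamma)$ where $\gamma$'s holonomy is conjugate to a product realizing the figure-eight, and $\lambda_\rho(\gamma')$ is the translation length of the other product. Using $2\cosh(\lambda(M)/2)=|\operatorname{tr}M|$ for hyperbolic $M$, the task becomes: express $|\operatorname{tr}|$ of each of the four relevant elements in terms of $L'',L'''$, and the angle $\theta$, and the geometry of $\ell,\ell'$. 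The key geometric input is a right-angled (or general) hexagon / crossing-geodesics identity: if one travels along a geodesic distance $L''$ from $p$, then turns through the exterior angle at $p$ by $\theta$ and travels $L'''$, the resulting composite isometry has trace computable by the standard formula for composing two hyperbolic translations along axes crossing at angle $\theta$. As $L'',L'''\to\infty$ these traces grow like $e^{(L''+L''')/2}$ times an explicit factor depending on $\theta$, and comparing the two resolutions gives a ratio whose logarithm is exactly $-\log\cos^2(\theta/2)$ or $-\log\sin^2(\theta/2)$ — which by \eqref{eqn:slack-angle} is $\slack_\xi(\ell,\ell')$ or $\slack_\xi(\ell,-\ell')$. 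This is where the ``ropes'' interpretation \eqref{eqn:slack-rope} should make the identification transparent: the slack is literally the asymptotic defect in the triangle-type inequality among the four pairwise distances between the endpoints of $\ell$ and $\ell'$, and uncrossing the ropes is exactly the surgery.

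More precisely, I would argue as follows. Take the four endpoints $\xi(x^\pm),\xi(y^\pm)$ of $\ell,\ell'$ in $\HH^2$, and approximate them by points $\mathsf{x}_n^\pm,\mathsf{y}_n^\pm$ running out along the axes at ``radius'' $\sim L''/2,\ L'''/2$ etc., so that $d(\mathsf{x}_n^-,\mathsf{x}_n^+)\approx L''$ on one side and similarly on the other. The broken geodesic representing $[\gamma]$ has length $\approx L''+L'''$; the resolved curve $[\gamma']$ follows, near $p$, the path $\mathsf{x}_n^-\to\mathsf{y}_n^+$ then (after going around) $\mathsf{y}_n^-\to\mathsf{x}_n^+$, so by the triangle inequality and the convexity of $\HH^2$ its length is within $O(e^{-cL_0})$ of $d(\mathsf{x}_n^-,\mathsf{y}_n^+)+d(\mathsf{y}_n^-,\mathsf{x}_n^+)$; plugging this and $L''+L'''$ into \eqref{eqn:slack-rope} yields $\lambda_\rho(\gamma')=\lambda_\rho(\gamma)-2\slack_\xi(\ell,-\ell')+o(1)$, and symmetrically for the two-curve resolution with $\slack_\xi(\ell,\ell')$. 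The uniformity of the error (the fact that a single $L_0$ works once $\varepsilon,\theta_0$ are fixed) comes from the fact that in a thin triangle in $\HH^2$ with one very long side, the geodesic fellow-travels the two short sides up to an exponentially small (in the shortest ``height'') additive error, and the angle bound $\theta\in[\theta_0,\pi-\theta_0]$ keeps $\cos^2(\theta/2)$ and $\sin^2(\theta/2)$ bounded away from $0$ so that the slack terms stay bounded. The main obstacle, and the step I would spend the most care on, is exactly this quantitative control: turning the asymptotic identity \eqref{eqn:slack-rope} into a statement with a uniform finite threshold $L_0$ and uniform error $\varepsilon$, which amounts to an explicit estimate on how fast the length of the resolved geodesic converges to the sum of two distances as the arcs $L'',L'''$ grow, independently of where exactly on the axes the relevant points sit. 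A clean way to do this is to work directly with the matrices: write the holonomy of $[\gamma']$ as $T_{L''}R_\theta T_{L'''}R_{\theta'}$ (products of translations and rotations in a normalized frame at $p$), expand the trace, and observe it equals $e^{(L''+L''')/2}\big(\text{const}(\theta)+O(e^{-\min(L'',L''')})\big)$; taking $\operatorname{arccosh}$ then gives the result with exponentially good, manifestly uniform, error.
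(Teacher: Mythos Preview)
Your approach---explicit hyperbolic trigonometry via trace computations for products of translations and rotations at the crossing point, then letting $L'',L'''\to\infty$---is exactly the paper's. The paper simply writes down the closed-form pair-of-pants identities
\[
\cosh\tfrac{\lambda_\rho(\gamma'')}{2}=\cosh\tfrac{L''}{2}\cos\tfrac{\theta}{2},\qquad
\cosh\tfrac{\lambda_\rho(\gamma''')}{2}=\cosh\tfrac{L'''}{2}\cos\tfrac{\theta}{2},
\]
\[
\cosh\tfrac{\lambda_\rho(\gamma')}{2}=2\cosh\tfrac{L''}{2}\cosh\tfrac{L'''}{2}\sin^2\tfrac{\theta}{2}-\cosh\tfrac{L''-L'''}{2},
\]
and finishes by replacing $\cosh$ by $\tfrac12\exp$ and invoking \eqref{eqn:slack-angle}; this is precisely what your matrix expansion of $T_{L''}R_\theta T_{L'''}\cdots$ produces, and the uniformity in $\theta_0$ is immediate from these exact formulas.

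One bookkeeping slip worth fixing: $\gamma''$ and $\gamma'''$ are \emph{not} conjugates of your $\gamma_1,\gamma_2$ (those are both conjugate to $\gamma$ itself, with translation length $L=L''+L'''$). Rather, $\gamma''$ and $\gamma'''$ are the two \emph{petals} of the figure-eight based at $p$, of lengths $L''$ and $L'''$; the original $\gamma$ is conjugate to their concatenation, and $\gamma'$ to the concatenation with one petal reversed. Once you set this up correctly, your trace computation goes through exactly as you describe. The ``rope'' argument via \eqref{eqn:slack-rope} is a valid heuristic but, as you note, less efficient than the direct trigonometric route.
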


\begin{proof}
From hyperbolic trigonometry in a pair of pants, we have
\begin{align*}
\cosh \tfrac{\lambda_{\rho}(\gamma')}{2} & = 2 \cosh \tfrac{L''}{2} \cosh \tfrac{L'''}{2} \sin^2 \tfrac{\theta}{2}  - \cosh \tfrac{L''-L'''}{2} \,, \\ 
\cosh \tfrac{\lambda_{\rho}(\gamma'')}{2} & = \cosh \tfrac{L''}{2} \cos \tfrac{\theta}{2} \,, \\ 
\cosh \tfrac{\lambda_{\rho}(\gamma''')}{2} & = \cosh \tfrac{L'''}{2} \cos \tfrac{\theta}{2} \,.
\end{align*}
The result follows by taking $L'', L''' \gg 1$, using \eqref{eqn:slack-angle} and approaching $\cosh$, $\sinh$ by $\frac{1}{2} \exp$.
\end{proof}

%%%%%%%%%%%%%%%%%%%%%%%%%%
\subsection{Self-intersecting currents and long crossing curves}

\begin{lemma} \label{lem:pigeon-hole}
Let $\rho:\pi_1(S) \to \PSL_2\RR$ be the holonomy representation of a complete hyperbolic structure on~$S$ with no cusps. 
Let $([\gamma_n])_{n\in \NN}$ be a sequence of closed curves in $S$, converging in $\mathsf{Curr}(S)$ to a self-intersecting geodesic current $\mu$, and satisfying $L(n):=\lambda_\rho(\gamma_n) \to +\infty$. 
Let $\ell_n \subset \HH^2$ be the oriented translation axis of $\rho(\gamma_n)$.
Then there exist elements $\beta_n\in \pi_1(S)$ and a constant $\theta_0>0$ such that for large $n$, the oriented geodesic lines $\ell_n$ and $\ell_n':=\rho(\beta_n)\cdot \ell_n$ cross at a point $p_n\in \HH^2$ such that: 
\begin{itemize}
  \item $\measuredangle_{p_n}(\ell_n, \ell'_n)  \in [\theta_0, \pi-\theta_0]$;
  \item if $I_n \subset \ell_n$ denotes the segment $[p_n, \rho(\gamma_n)\cdot p_n]$, of length $L(n)$, then $p_n$ lies on the segment $\rho(\beta_n)\cdot I_n \subset \ell'_n$, at distances $L''(n) \to +\infty$ and $L'''(n)\to +\infty$ from its endpoints.
\end{itemize}
\end{lemma}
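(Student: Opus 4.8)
The plan is to exploit the self-intersection of the limiting current $\mu$ together with the convergence $[\gamma_n] \to \mu$ to locate, for each large $n$, a "robust" self-crossing of the closed geodesic $[\gamma_n]$ — one whose crossing angle is bounded away from $0$ and $\pi$, and which occurs "deep inside" the long curve so that both arcs of $[\gamma_n]$ on either side of the crossing are long. Since $\mu$ self-intersects, its support (modulo $\epsilon$) contains two distinct lines $m, m'$ in $\mathscr G(\widetilde S)$ that cross transversally at some point $q \in \HH^2$; by lower semicontinuity of intersection number (or directly, by weak-$*$ convergence applied to a small open neighborhood of the crossing configuration in $\mathscr G(\widetilde S)\times\mathscr G(\widetilde S)$), for all large $n$ the geodesic $[\gamma_n]$ has a self-crossing within a fixed small ball near $q$, at an angle within $[\theta_0, \pi - \theta_0]$ for some $\theta_0 > 0$ depending only on the transversal crossing angle of $m$ and $m'$. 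Concretely, such a self-crossing is recorded by an element $\beta_n \in \pi_1(S)$ with $\rho(\beta_n)\cdot \ell_n$ crossing $\ell_n$ at the desired angle; I would fix the lift $\ell_n$ of the geodesic of $[\gamma_n]$ so that the crossing point $p_n := \ell_n \cap \rho(\beta_n)\cdot\ell_n$ lies in a fixed compact region of $\HH^2$ downstairs.

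Next I would arrange that $p_n$ lies on the sub-segment $\rho(\beta_n)\cdot I_n$ of $\ell'_n$ rather than merely on the full line $\ell'_n$: this is automatic if we choose $\beta_n$ to represent the \emph{shortest} way to realize this particular crossing pattern, i.e.\ so that the combinatorial length of $\beta_n$ along $[\gamma_n]$ is minimal; then $\rho(\beta_n)^{-1}\cdot p_n$ is an interior point of $I_n$, so $p_n \in \rho(\beta_n)\cdot I_n$, and $p_n$ sits at distances $L''(n)$, $L'''(n)$ along $\ell'_n$ from the two endpoints $\rho(\beta_n)\cdot p_n$ and $\rho(\beta_n\gamma_n)\cdot p_n$ of $\rho(\beta_n)\cdot I_n$, with $L''(n) + L'''(n) = L(n)$.

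It remains to show $L''(n) \to +\infty$ and $L'''(n) \to +\infty$, which is where the main work lies, and which is why the convergence to a self-intersecting current (and not just the existence of one self-crossing) is needed. The point is that along the sequence, the current spends a definite positive proportion of its mass near \emph{both} of the two strands $m, m'$ through $q$: more precisely, pick the crossing of $\mu$ so that each of $m, m'$ can be extended a long distance $R$ on both sides of $q$ within $\operatorname{supp}(\mu)$ (possible after increasing $R$ and shrinking the chosen crossing if necessary, since $\mu$ has no finite-length pieces — a geodesic current supported on one crossing pair of \emph{segments} is not closed under the flow). Then weak-$*$ convergence forces, for large $n$, the geodesic $[\gamma_n]$ to closely follow $m$ for distance $\approx R$ on both sides of the crossing and likewise to follow $m'$ for distance $\approx R$ on both sides; translated upstairs, this says that both arcs of $\rho(\beta_n)\cdot I_n$ emanating from $p_n$ have length $\geq R - o(1)$, i.e.\ $L''(n), L'''(n) \geq R - o(1)$. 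Since $R$ was arbitrary (we may run this with $R = R_k \to \infty$ along a diagonal subsequence, and the statement is about $n\to\infty$), we conclude $L''(n), L'''(n) \to +\infty$.

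The main obstacle I anticipate is making the "follows $m$ and $m'$ for a long time on both sides" step precise: one must be careful that the \emph{same} lift $\ell_n$ and the \emph{same} element $\beta_n$ witness long following on both strands and both sides simultaneously, rather than four different near-crossings drifting apart. I would handle this by working in a single small chart in $\mathscr G(\widetilde S) \times \mathscr G(\widetilde S)$ around the pair $(m, m')$ and using that the support of $[\gamma_n]$, viewed as a subset of $\mathscr G(\widetilde S)$, Hausdorff-converges (on compacta, in the appropriate sense) to $\operatorname{supp}(\mu)$; a pigeonhole over the (boundedly many) lifts meeting a fixed compact set then pins down $\ell_n$ and $\beta_n$ uniformly. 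A second, more routine point is extracting the quantitative angle bound $\theta_0$ and the length bounds uniformly in $n$ rather than merely along subsequences, which follows because the configuration near $q$ is fixed once and for all before passing to the limit.
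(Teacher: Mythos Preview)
Your overall strategy --- locate a self-crossing of $[\gamma_n]$ near a fixed transverse crossing $(m,m')$ in $\operatorname{supp}(\mu)$ and read off the angle and depth bounds from that --- is sound, but the central inference for $L''(n),L'''(n)\to\infty$ is not justified as written. You assert that because $[\gamma_n]$ ``follows $m'$ for distance $\approx R$ on both sides'' of the crossing, the two sub-arcs of $\rho(\beta_n)\cdot I_n$ issuing from $p_n$ have length $\geq R-o(1)$. But $L''(n),L'''(n)$ are not governed by how long $\ell'_n$ shadows $m'$: the line $\ell'_n$ is a nearby geodesic and shadows $m'$ on its \emph{entire} length anyway. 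What $L''(n)$ actually measures is the arc-length along $[\gamma_n]$ from the passage through $\bar p_n$ in direction $\approx v'$ until the passage in direction $\approx v$, and your ``follows $m'$'' statement says nothing about when that second passage occurs. The missing observation is that $m'$, being distinct from $m$, never passes through $(\bar q,v)\in T^1S$; hence $m'|_{[-R,R]}$ stays a definite distance $\delta_R>0$ away from $(\bar q,v)$, and so the arc of $[\gamma_n]$ (which $\delta_R/3$-shadows $m'$ provided the crossing is chosen close enough to $(m,m')$) cannot reach $(\bar p_n,w)\approx(\bar q,v)$ before time $R$. This fills the gap, but it forces you to take the crossing ever closer to $(m,m')$ as $R\to\infty$ and then run a diagonal extraction --- roughly what your last paragraph sketches, though the obstacle you anticipated there (pinning down a single $\beta_n$ for all four ``followings'') is a non-issue once the crossing is fixed.

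The paper's argument is different and sidesteps this entirely via a true pigeonhole count. Since $[\gamma_n]/L(n)\to\mu$ and $\mu$ has positive mass near each of the tangent directions $[v],[v']\in T^1S$, the closed orbit $[\gamma_n]$ makes at least $\delta L(n)$ passes $\varepsilon$-close to $[v]$ and at least $\delta L(n)$ passes $\varepsilon$-close to $[v']$, with consecutive passes separated by some fixed $r>0$. Fix any one pass near $[v]$ to define $\ell_n$; the $\geq\delta L(n)$ passes near $[v']$ are spread along the cycle of length $L(n)$ with spacing $\geq r$, so one of them lies at cyclic distance $\geq r\delta L(n)/3$ from the chosen $[v]$-pass. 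Taking $\beta_n$ to realize that pass gives $L''(n),L'''(n)\geq r\delta L(n)/3$, a linear lower bound with no diagonal argument needed.
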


The conclusion of Lemma~\ref{lem:pigeon-hole} can be rephrased simply as saying that we can choose self-crossings that satisfy the assumptions of Lemma~\ref{lem:thinpants}.

\begin{proof}[Proof of Lemma~\ref{lem:pigeon-hole}]
Since the geodesic current $\mu\in \mathsf{Curr}(S)$ self-intersects, its support lifts to a subset of $\mathrm{T}^1\mathbb H^2$ containing two flow lines that project to a pair of crossing geodesics $\ell_\infty, \ell'_\infty \subset \mathbb{H}^2$. 
In particular, for $n$ large enough, $\gamma_n$ has two conjugates whose axes cross near $\ell_\infty \cap \ell'_\infty$.
We split the length $L(n)=\lambda_{\rho}(\gamma_n) > 0$ as $L(n)=L''(n)+L'''(n)$ as in Section~\ref{subsec:prelim-hyperbolic} above (see Figure~\ref{fig:resolution}).

Let $v,v' \in \mathrm{T}^1\HH^2$ be the respective tangent vectors to $\ell_\infty, \ell'_\infty$ at the crossing point.
Since $\ell_n$ and~$\ell'_n$ converge respectively to $\ell$ and~$\ell'$, for any $\varepsilon > 0$ there exists $\delta > 0$ such that for any sufficiently large~$n$, the trajectory of $\gamma_n$ in $\mathrm{T}^1S$ passes $\varepsilon$-close to each of $[v]$ and $[v']$
at least $\delta L(n) = \delta \lambda_{\rho}(\gamma_n)$ times. 
We fix $\varepsilon > 0$ small enough that there exists $\theta_0>0$  such that the lifts of any two of the close passes, near $v$ and $v'$, intersect in $\mathbb{H}^2$ at an angle in $[\theta_0, \pi-\theta_0]$.

Further, there is some definite amount of length $r > 0$ between each successive pass near $[v]$ and likewise for~$[v']$. 
The trajectory of $\gamma_n$ in $\mathrm{T}^1 S$ therefore makes an $\varepsilon$-close pass to~$[v]$, defining the lift $\ell_n$, and an $\varepsilon$-close pass to $[v']$,  defining the lift $\ell_n'=\beta_n\cdot \ell_n$, so that the lengths $L''(n)$, $L'''(n)$ traveled between these two passes in either direction are both at least $r \delta L(n)/3$,  which goes to infinity as $n \to+ \infty$. 
\end{proof}

%%%%%%%%%%%%%%%%%%%%%%%%%
\subsection{Proof of Theorem~\ref{thm:azimut-limit-cone-switched}}

Let $\mathcal{H}$ be a supporting hyperplane to~$\Lambda_{\bsrho}$ which is azimuthal: we have $\mathcal{H} = \mathrm{Ker}(\Omega)$ and $\Omega(\Lambda_{\bsrho})\geq 0$ where $\Omega = \sum_{i=1}^d c_i e_i^*$ for some $c_i\in\RR$ which are all $\geq 0$ except for one, which we can take to be $c_1<0$ up to renumbering.

Consider a sequence $(\gamma_n) \in (\pi_1(S)\smallsetminus \{1\})^{\NN}$ such that 
$\lambda_{\rho_1}(\gamma_n) \to +\infty$ (hence $\lambda_{\rho_i}(\gamma_n) \to +\infty$ for all $1\leq i\leq d$) and such that the direction of $\boldsymbol{\lambda_\rho}(\gamma_n)$ converges into $\mathrm{Ker}(\Omega)$. 
Further, choose these $\gamma_n$ to be \emph{record breakers} with respect to $\lambda_{\rho_1}$: by this we mean that for any~$n\in\NN$,
\begin{equation} \label{eq:record}
m_n := \frac{\Omega(\boldsymbol{\lambda_\rho}(\gamma_n))}{\lambda_{\rho_1}(\gamma_n)} \leq \frac{\Omega(\boldsymbol{\lambda_\rho}(\beta))}{\lambda_{\rho_1}(\beta)} ~\text{ for all }~ \beta \in \pi_1(S)\smallsetminus \{1\} ~\text{ with }~ \lambda_{\rho_1}(\beta) \leq \lambda_{\rho_1}(\gamma_n).
\end{equation}
Such an infinite sequence always exists (possibly ending with powers $\gamma_n=\gamma^{k_n}$ of a single element~$\gamma$).
After taking a subsequence, the closed curves $[\gamma_n]$ converge projectively to a geodesic current $\mu \in \mathsf{Curr}(S)$ such that $\Omega(\bslambda_{\bsrho}(\mu)) = 0$. 

We claim that $\mu$ does not self-intersect, \ie it is a measured lamination. 
Indeed, suppose by contradiction that $\mu$ does self-intersect.
Let $\theta_0>0$ be given by Lemma~\ref{lem:pigeon-hole} applied to $([\gamma_n])_{n\in \NN}$ and (say) $\rho_1$; and define $L_1(n):=\min\{L''(n), L'''(n)\} \to +\infty$ in the notations of that lemma. 
Let $L_0(n)$ be given by Lemma~\ref{lem:thinpants} for $\rho_1$ and the pair $(\theta_0, \varepsilon(n))$, where $\varepsilon(n)\to 0$ slowly enough that $L_0(n)\leq L_1(n)$.
To summarize, Lemma~\ref{lem:pigeon-hole} provides conjugates of $\gamma_n$ with crossing axes $\ell_n, \ell'_n$ satisfying all assumptions of Lemma~\ref{lem:thinpants}, including $L''(n),L'''(n) \geq L_0(n)$; and, up to taking a subsequence,  $\ell_n, \ell'_n$ converge to crossing axes $\ell, \ell'$. 
This is true (up to adjusting~$\theta_0$ and $\varepsilon(n)\to 0$) for all representations $\rho_i$ simultaneously, since the $d$ hyperbolic surfaces $\rho_i(\pi_1 S) \backslash \HH^2$ are mutually quasi-isometric.
Let $\xi_i : \partial_{\infty} \pi_1(S) \to \PP^1 \RR$ be the corresponding equivariant boundary maps for $1\leq i \leq d$, and let for all $n\in \NN$
$$
\left \{ \begin{array}{rl} 
\slack_{i,n} :=& \slack_{\xi_i}(\ell_n, \ell'_n) \vspace{1mm} \\
\boldsymbol{X}_n :=& ( \slack_{i,n} )_{1\leq i \leq d} \end{array} \right .
\quad \text{and} \quad 
\left \{ \begin{array}{rl} 
\slack'_{i,n} :=& \slack_{\xi_i}(\ell_n, -\ell'_n) \vspace{1mm} \\
\boldsymbol{X}'_n :=& ( \slack'_{i,n} )_{1\leq i \leq d}~. \end{array} \right .
$$
Lemma~\ref{lem:thinpants}, applied coordinate-wise, then yields resolutions $\gamma'_n$ and $\gamma''_n, \gamma'''_n$ such that
\begin{align*}
\boldsymbol{\lambda_\rho}(\gamma_n'') + \boldsymbol{\lambda_\rho}(\gamma_n''') & = \boldsymbol{\lambda_\rho}(\gamma_n) - 2\boldsymbol{X}_n + o(1) \\
\boldsymbol{\lambda_\rho}(\gamma_n') & = \boldsymbol{\lambda_\rho}(\gamma_n) - 2 \boldsymbol{X}'_n + o(1) ~.
\end{align*}

Let $c>0$ be given by Lemma~\ref{lem:Omega-non-pos}: after possibly passing to a subsequence, we have $\Omega(\boldsymbol{X}_n) \geq c \, \slack_{1,n} $ for all $n\in\NN$, or $\Omega(\boldsymbol{X}'_n) \geq c \, \slack'_{1,n} $ for all~$n$.

We treat the latter case first: we have
\begin{align} \notag
\frac{\Omega(\boldsymbol{\lambda_\rho}(\gamma_n') )}{\lambda_{\rho_1}(\gamma_n')} &= \frac{\Omega(\boldsymbol{\lambda_\rho}(\gamma_n) ) - 2\,\Omega(\boldsymbol{X}'_n) + o(1)}{\lambda_{\rho_1}(\gamma_n) - 2\,\slack'_{1,n} + o(1)}\\ \notag
&\leq \frac{\Omega(\boldsymbol{\lambda_\rho}(\gamma_n) ) - 2c\, \slack'_{1,n}  + o(1)}{\lambda_{\rho_1}(\gamma_n) - 2\,\slack'_{1,n}  + o(1)}\\ 
\label{eq:mna}
&= m_n - \frac{2 (c - m_n)\, \slack'_{1,n}  + o(1)}{\lambda_{\rho_1}(\gamma_n) - 2\,\slack'_{1,n}  + o(1)}
\end{align}
where $m_n = \Omega(\boldsymbol{\lambda_\rho}(\gamma_n))/\lambda_{\rho_1}(\gamma_n)$ as in \eqref{eq:record}.
Note that $m_n \to 0 $, and $\slack'_{1,n} \to \slack_{\xi_1}(\ell, -\ell') > 0$ by continuity of the cross ratio. 
Therefore \eqref{eq:mna} is eventually $<m_n$. 
Since $\lambda_{\rho_1}(\gamma_n') < \lambda_{\rho_1}(\gamma_n)$, this contradicts the record breaker property~\eqref{eq:record}.

In the remaining case, we similarly obtain that
$$\frac{\Omega(\boldsymbol{\lambda_\rho}(\gamma_n'') )+\Omega(\boldsymbol{\lambda_\rho}(\gamma_n''') )}{\lambda_{\rho_1}(\gamma_n'') + \lambda_{\rho_1}(\gamma_n''')}$$
is eventually $<m_n$, hence $\min \left \{ \frac{\Omega(\boldsymbol{\lambda_\rho}(\gamma_n'') )}{\lambda_{\rho_1}(\gamma_n'')}, \frac{\Omega(\boldsymbol{\lambda_\rho}(\gamma_n''') )}{\lambda_{\rho_1}(\gamma_n''')} \right \} < m_n$, again contradicting~\eqref{eq:record}.

Thus the geodesic current $\mu$ is in fact a measured geodesic lamination: $\mu\in \mathscr{ML}(S)$.
Therefore $\mathcal{H} = \mathrm{Ker}(\Omega)$ meets
the set $\bslambda_{\bsrho}(\mathscr{ML}(S))$, hence its convex hull in~$\mathfrak{a}^+$, which is the simple hull $\Lambda_{\bsrho}^s$ (Definition~\ref{def:simple-hull}).
This completes the proof of Theorem~\ref{thm:azimut-limit-cone-switched}, hence the ``only if'' direction of Theorem~\ref{thm:azimut-limit-cone}.

%%%%%%%%%%%%%%%%%%%%%%%%%
\subsection{Proof of Theorem~\ref{thm:azimut-limit-cone}}

For the other direction, let $\mathcal{H}$ be an azimuthal supporting hyperplane to the simple hull~$\Lambda_{\bsrho}^s$: we have $\mathcal{H} = \mathrm{Ker}(\Omega)$ and $\Omega(\Lambda_{\bsrho}^s)\geq 0$ where $\Omega = \sum_{i=1}^d c_i e_i^*$ for some $c_i\in\RR$ which are all $\geq 0$ except for one, which we can take to be $c_1<0$ up to renumbering.
For any $0 < t < |c_1|$, if we set $\Omega_t := \Omega + t e_1^*$, then $\Omega_t(x) > \Omega(x)$ for all $x \in \mathfrak{a}^+ \smallsetminus \{0\}$. 
For $t$ close enough to $|c_1|$, we have $\Omega_t(\Lambda_{\bsrho} \smallsetminus \{0\}) > 0$. 
It follows by continuity that there exists $0 \leq t < |c_1|$ such that $\mathrm{Ker}(\Omega_t)$ is a supporting hyperplane to $\Lambda_{\bsrho}$.
This supporting hyperplane is still azimuthal.
By Theorem~\ref{thm:azimut-limit-cone-switched}, there exists $\mu \in \mathscr{ML}(S)$ such that $\Omega_t(\bslambda_{\bsrho}(\mu)) = 0$. 
Therefore $t = 0$ and $\Omega(\Lambda_{\bsrho}) = \Omega_t(\Lambda_{\bsrho}) \geq 0$. 
This completes the proof of Theorem~\ref{thm:azimut-limit-cone}.

%%%%%%%%%%%%%%%%%%%%%%%%%%%%%%%%%%%%%%%%%%%%%%%%%%%
\section{Finite-sided limit cones} \label{sec:finite-sided}

In this section we use Theorem~\ref{thm:azimut-limit-cone} to construct examples of surfaces $S$ and multi-Fuchsian representations $\boldsymbol{\rho}: \pi_1(S) \to (\PSL_2 \RR)^3$ for which the projectivized limit cone $\PP \Lambda_{\boldsymbol \rho}$ is a polygon, and we prove Theorem~\ref{thm:N-gon}.

We use the following terminology.

\begin{definition} \label{def:azim-polygon}
A convex polyhedron $\Pi$ of $\PP(\RR^d_{\geq 0})$ is \emph{azimuthal} if it lifts to a convex cone $\widetilde{\Pi}$ of $\RR^d_{\geq 0}$ such that for any facet of~$\Pi$, the corresponding supporting hyperplane to~$\widetilde{\Pi}$ is azimuthal in the sense of Definition~\ref{def:azim}.
\end{definition}

Such a polyhedron always contains $[1:\dots:1]$ in its interior.

Before giving a general construction of multi-Fuchsian representations with polygonal projectivized limit cone, let us first demonstrate the basic idea in the case of a three-holed sphere.

\begin{example} \label{ex:pants}
Let $S$ be a three-holed sphere, with fundamental group $\pi_1(S) = \langle a,b,c \,|\, cba=\nolinebreak 1\rangle$ where $a,b,c$ correspond to the three boundary curves.
Famously, $S$ has only three simple closed geodesics, namely the three boundary curves $a,b,c$, and the support of any measured lamination on~$S$ is contained in their union.
In each hyperbolic structure on~$S$ with holonomy~$\rho_i$, the three lengths $\lambda_{\rho_i}(a), \lambda_{\rho_i}(b),\lambda_{\rho_i}(c)\geq 0$ may be chosen independently; these determine $\rho_i$ up to conjugacy. 
Hence given choices of three vectors $v_a, v_b, v_c \in (\RR_{>0})^3$, there is a unique (up to conjugacy) multi-Fuchsian representation $\bsrho: \pi_1(S) \to (\PSL_2 \RR)^3$ with no cusps such that $(\bslambda_{\bsrho}(a), \bslambda_{\bsrho}(b), \bslambda_{\bsrho}(c)) = (v_a, v_b, v_c)$.
Its simple hull (Definition~\ref{def:simple-hull}) is $\Lambda_{\bsrho}^s = \RR_{\geq 0}\,v_a + \RR_{\geq 0}\,v_b + \RR_{\geq 0}\,v_c$ --- a cone on a triangle.
It is easy to choose $v_a, v_b, v_c$ so that the triangle of $\PP(\RR^3_{\geq 0})$ with vertices $[v_a], [v_b], [v_c]$ is azimuthal: \eg $v_a = (t_a,1,1)$, $v_b = (1,t_b,1)$, and $v_c = (1,1,t_c)$ work for any $t_a,t_b,t_c\in (0,1)$.
In this case, Theorem~\ref{thm:azimut-limit-cone} implies that the projectivized limit cone $\PP \Lambda_{\boldsymbol \rho}$ is this projective triangle.
\end{example}

%%%%%%%%%%%%%%%%%%%%%%%%%
\subsection{Strategy of the proof of Theorem~\ref{thm:N-gon}} \label{subsec:strategy-N-gon}

Let $g\geq 2$. 
A right-angled convex hyperbolic $(2g+2)$-gon $P$, doubled across all its sides, determines a closed hyperbolic surface $S_P$ of genus~$g$, made of $4$ isometric copies of $P$.
Namely, the group generated by the reflections in the sides of $P$ admits $\pi_1(S_P)$ as a normal subgroup of index $4$, and the quotient is isomorphic to $(\ZZ/2\ZZ)^2$ which acts on $S_P$ by isometries. 
Concretely, one may first double $P$ across its even sides to obtain a sphere with $g+1$ holes, and then across the boundaries of the holes to obtain $S_P$.
We choose a numbering $1,\dots,2g+2$ of the edges of~$P$ in counterclockwise (\ie positive) cyclic order, which we call a \emph{cyclic edge labelling} of~$P$.

If $P'$ is another right-angled convex hyperbolic $(2g+2)$-gon, then any cyclic edge labelling of~$P'$ determines an isomorphism $\pi_1(S_P) \simeq \pi_1(S_{P'})$, hence a holonomy representation $\rho_{P'} : \pi_1(S_P) \to \PSL_2\RR$ for the hyperbolic surface $S_{P'}$.
We will focus on multi-Fuchsian representations of the form $\boldsymbol{\rho} = (\rho_{P_1}, \rho_{P_2}, \rho_{P_3}) : \pi_1(S_P) \to (\PSL_2 \RR)^3$ where $P_1, P_2, P_3$ are right-angled $(2g+2)$-gons with cyclic edge labellings.
Due to the common $(\ZZ/2\ZZ)^2$ symmetry, any direction in the limit cone $\Lambda_{\boldsymbol{\rho}}$ is approached by Jordan projections of geodesic currents on~$S_P$ that are symmetric under $(\ZZ/2\ZZ)^2$, \ie under all reflections.
Among these symmetric geodesic currents, consider those that do not self-intersect, \ie the symmetric geodesic laminations.
They are supported on unions of lifts to~$S_P$ of \emph{edges} and \emph{chords} of~$P$, where a chord is a segment perpendicular to sides of $P$ at both ends. 
An edge of $P$ lifts in $S_P$ to a twice-longer loop.
A chord of $P$ lifts to either two twice-longer loops (if its endpoints lie in two sides with the same parity), or a single four-times-longer loop (otherwise). 
There are $2g+2$ edges and $(g+1)(2g-3)$ chords in total.
Thus the projectivized simple hull $\PP \Lambda_{\bsrho}^s$ of~$\bsrho$ is a finite polygon in $\PP \mathfrak{a}^+ = \PP(\RR^3_{\geq 0})$.
For well-chosen right-angled hyperbolic $(2g+2)$-gons $P_1, P_2, P_3$ with cyclic edge labellings, it will be possible to show that this polygon is azimuthal; it will then be equal to the projectivized limit cone $\PP \Lambda_{\bsrho}$ by Theorem~\ref{thm:azimut-limit-cone}.

\begin{remark} \label{rem:number-sides}
Although the number of sides of the polygon $\PP \Lambda_{\bsrho}$ could in principle become as large as quadratic in $g$ with this technique, in our actual arguments the vertices of $\PP \Lambda_{\boldsymbol{\rho}}$ will always be given by sides and chords that are \emph{short} in one of the three representations. 
Up to $2g-1$ sides and chords can become short in a right-angled $(2g+2)$-gon, so even then it may be possible to push the bound in Theorem~\ref{thm:N-gon} up to $6g-3$; we have not attempted to optimize it.
On the other extreme, Theorem~\ref{thm:N-gon} and a covering argument also imply that there exist multi-Fuchsian representations of closed surface groups of arbitrarily high genus whose projectivized limit cone $\PP \Lambda_{\boldsymbol{\rho}}$ is a polygon with constant number of sides.
\end{remark}

%%%%%%%%%%%%%%%%%%%%%%%%%
\subsection{Genus 2: hexagons} \label{sec:6-gon}

For expository reasons, we first demonstrate the technique in the case $g=2$, proving a slight variant of Theorem~\ref{thm:N-gon} (namely, $6$ sides instead of~$\geq7$).
Then in Section~\ref{subsec:g-gon} we prove Theorem~\ref{thm:N-gon} in any genus $g\geq 2$.  

\begin{proposition} \label{prop:hexagon}
Let $S_2$ be a closed orientable surface of genus~$2$.
There exist multi-Fuchsian representations $\boldsymbol{\rho} : \pi_1(S_2) \to (\PSL_2\RR)^3$ whose projectivized limit cone $\PP \Lambda_{\boldsymbol{\rho}}$ is a hexagon.
\end{proposition}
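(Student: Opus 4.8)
The plan is to realize $\PP\Lambda_{\boldsymbol{\rho}}$ as an explicit azimuthal hexagon using the scheme of Section~\ref{subsec:strategy-N-gon} together with Theorem~\ref{thm:azimut-limit-cone}, and to exploit a $\ZZ/3$-symmetry so that azimuthality comes for free. Fix a right-angled convex hyperbolic hexagon $P$ with its cyclic edge labelling $1,\dots,6$, and write $S_2=S_P$. Let $r$ be the order-$3$ combinatorial rotation of the labelled hexagon sending edge $i$ to edge $i+2$ (indices mod $6$); it induces a mapping class $\phi_r$ of $S_2$ with $\phi_r^3=\mathrm{id}$, permuting the $(\ZZ/2\ZZ)^2$-symmetric simple closed curves of $S_2$. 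I would take
\[
\boldsymbol{\rho}:=\big(\rho_P,\ \rho_P\circ\phi_r,\ \rho_P\circ\phi_r^2\big):\pi_1(S_2)\longrightarrow(\PSL_2\RR)^3 ,
\]
which is multi-Fuchsian. For $P$ outside a proper closed subset of its moduli, $\mathrm{Isom}(S_P)=(\ZZ/2\ZZ)^2$ contains no element of order~$3$, so the three markings of $S_P$ are pairwise distinct points of Teichm\"uller space; hence the three components of $\boldsymbol{\rho}$ are pairwise nonconjugate, and $\boldsymbol{\rho}$ is Zariski-dense by Lemma~\ref{lem:Goursat}. Since $\boldsymbol{\lambda_\rho}(\gamma)=\big(\lambda_{\rho_P}(\gamma),\lambda_{\rho_P}(\phi_r\gamma),\lambda_{\rho_P}(\phi_r^2\gamma)\big)$, the cyclic shift $\sigma$ of the coordinates of $\RR^3$ preserves $\boldsymbol{\lambda_\rho}(\pi_1(S_2))$, hence also $\Lambda_{\boldsymbol{\rho}}$ and $\Lambda^s_{\boldsymbol{\rho}}$.

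By the discussion in Section~\ref{subsec:strategy-N-gon}, the simple hull $\Lambda^s_{\boldsymbol{\rho}}$ is the cone spanned by the Jordan projections of the nine $(\ZZ/2\ZZ)^2$-symmetric simple closed curves of $S_P$: the six edge-doubles $i\bar\imath$ ($1\le i\le 6$), which have length $2\ell_i$ in each of the three structures, where $\ell_i$ is the length of side $i$ in $P$; and the three chords $\{1,4\},\{3,6\},\{2,5\}$ joining opposite sides of $P$, which have lengths $4h_{14},4h_{36},4h_{25}$, where $h_{jk}$ is the distance in $P$ between sides $j$ and $k$. The rotation $\phi_r$ permutes these curves in three $3$-cycles --- the pants curves $\{i\bar\imath : i=1,3,5\}$, the seams $\{i\bar\imath : i=2,4,6\}$, and the three chords --- so $\PP\Lambda^s_{\boldsymbol{\rho}}$ is the convex hull, inside the open triangle $\PP(\RR^3_{>0})$, of three $\sigma$-orbits $T_{\mathrm p},T_{\mathrm s},T_{\mathrm c}$ of size $3$. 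The shape of $T_{\mathrm p}$ is governed by the ratio $\ell_1:\ell_3:\ell_5$, that of $T_{\mathrm s}$ by $\ell_2:\ell_4:\ell_6$, and that of $T_{\mathrm c}$ by $h_{14}:h_{36}:h_{25}$; right-angled hexagon trigonometry expresses all of these as explicit real-analytic functions of the three alternate side lengths $(a,b,c)$ of $P$, which freely parametrize the moduli of right-angled hexagons.

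The heart of the proof is then to choose $(a,b,c)$ so that the convex hull of $T_{\mathrm p}\cup T_{\mathrm s}\cup T_{\mathrm c}$ is a hexagon. Since $\sigma$ preserves this hull and its set of extreme points, each $\sigma$-orbit contributes either all three or none of its points as vertices; so the hull is a hexagon precisely when two of the orbits, say $T_{\mathrm p}$ and $T_{\mathrm s}$, are in convex position and interleave --- their triangles overlapping in a hexagram (Star-of-David) pattern --- while $T_{\mathrm c}$ lies strictly in the interior of their hull. I would obtain such $(a,b,c)$ by a degeneration: letting $(a,b,c)$ tend to a suitable boundary stratum of the moduli (\eg shrinking one side so that a pants curve becomes short), the three ratio-triples converge to explicit limits, and one picks the stratum so that in the limit $T_{\mathrm p}$ and $T_{\mathrm s}$ interleave while $T_{\mathrm c}$ is strictly interior; since the conditions involved (being a convex hexagon, strict interiority of $T_{\mathrm c}$) are all open, any $(a,b,c)$ close enough to this stratum (and avoiding the negligible bad set of the first paragraph) works. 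Granting this, $\PP\Lambda^s_{\boldsymbol{\rho}}$ is a hexagon lying strictly inside $\PP(\RR^3_{\geq 0})$, with $[1:1:1]$ --- its barycentre, by $\ZZ/3$-symmetry --- in its interior. Hence each of its six sides cuts off exactly one corner of the $2$-simplex (automatic for a chord of a triangle) while keeping $[1:1:1]$ strictly on its positive side, \ie the six facet hyperplanes of the cone $\Lambda^s_{\boldsymbol{\rho}}$ are azimuthal in the sense of Definition~\ref{def:azim}. By Theorem~\ref{thm:azimut-limit-cone} --- applicable since $\boldsymbol{\rho}$ is multi-Fuchsian and Zariski-dense --- these six hyperplanes are supporting for $\Lambda_{\boldsymbol{\rho}}$ as well; intersecting the six positive half-spaces yields $\Lambda_{\boldsymbol{\rho}}\subseteq\Lambda^s_{\boldsymbol{\rho}}$, and the reverse inclusion is automatic, so $\PP\Lambda_{\boldsymbol{\rho}}=\PP\Lambda^s_{\boldsymbol{\rho}}$ is the hexagon.

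I expect the main obstacle to be the degeneration computation of the third paragraph: controlling the three explicit ratio-triples $T_{\mathrm p},T_{\mathrm s},T_{\mathrm c}$ simultaneously --- through right-angled hexagon trigonometry and the formulas for opposite-side distances --- and tracking the combinatorics of the convex hull of the nine moving points until a genuine hexagon emerges (two interleaving $\sigma$-triples with the third strictly inside). Secondary care is needed for the precise enumeration in Section~\ref{subsec:strategy-N-gon} of the symmetric simple closed curves together with their length functions, and for the genericity statements (no extra hyperbolic symmetry; nondegeneracy of the three $\sigma$-triples) that underpin Zariski-density and the nonemptiness of the interior.
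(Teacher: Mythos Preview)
Your overall architecture matches the paper's: both double a right-angled hexagon $P$ to get $S_2$, take the three components of $\boldsymbol\rho$ to be cyclic relabellings of $\rho_P$, compute the simple hull as the convex hull of the Jordan projections of the nine $(\ZZ/2\ZZ)^2$-symmetric curves, and conclude via Theorem~\ref{thm:azimut-limit-cone}. The paper's choice differs in the details: it takes $P$ \emph{centrally symmetric} (built by reflecting a thin right-angled pentagon through the midpoint of one side, with two small parameters $x,y$) and shifts labels by~$1$ rather than~$2$. Central symmetry makes shift-by-$3$ an isometry, so this still produces a $\sigma$-invariant limit cone, but now opposite edges of~$P$ give the \emph{same} multi-length; the six vertices of $\PP\Lambda^s_{\boldsymbol\rho}$ are three edge points and the three chord points. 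In your version the nine points are generically distinct and you need the three chord points $T_{\mathrm c}$ to fall strictly inside the hull of $T_{\mathrm p}\cup T_{\mathrm s}$ --- a configuration you do not verify, and which the obvious pinching degenerations do not produce (pinching sends one chord short, hence the chord-ratio point toward a wall of $\PP\mathfrak a^+$, not toward the centre).

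The genuine gap, however, is your azimuthality claim. It is \emph{not} true that a $\sigma$-invariant hexagon in $\PP(\RR^3_{\geq 0})$ containing $[1:1:1]$ is automatically azimuthal. A chord of the simplex does separate one corner from two, but azimuthality requires the two corners to be on the \emph{positive} (hexagon) side, and the centroid can lie on the one-corner side: e.g.\ the line $x_1=\tfrac14$ in barycentric coordinates has $[1:1:1]$ on the side of $e_1$ alone. The paper's construction carries exactly the $\ZZ/3$-symmetry you invoke, yet the authors note explicitly that for $y=x$ azimuthality \emph{fails} --- one side of $\PP\Lambda^s_{\boldsymbol\rho}$ runs nearly along the wall $[*:*:0]$ and cuts off two corners --- and they must impose $y=\delta x$ with $\delta\notin[2/3,4]$ to restore it. So azimuthality is a genuine constraint to be checked against the specific degeneration, not a free consequence of the symmetry; without it Theorem~\ref{thm:azimut-limit-cone} does not apply and your argument does not close.
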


\begin{proof}
We will build our hexagons, and all subsequent $(2g+2)$-gons, out of right-angled pentagons.
A convex right-angled hyperbolic pentagon has side lengths, enumerated in counterclockwise cyclic order,
$$\textstyle \big (\arcsinh \, \frac{\cosh y}{\sinh x}~,~ x~,~\arccosh \, \frac{1}{\tanh x \tanh y}~,~ y ~,~ \arcsinh \, \frac{\cosh x}{\sinh y} \big )$$
where $x,y>0$ (see \eg \cite[Th.\,7.18.1]{bea95}).
We shall take both $x$ and~$y$ to be very small, in which case this becomes
\begin{align*}
& \textstyle \big ( |\log \frac{x}{2}| ~,~ x~,~|\log \frac{xy}{2}|~,~ y ~,~ |\log \frac{y}{2}| \big )~ +o(1) \\
= ~& \big ( X ~,~ x~,~X+Y~,~ y ~,~ Y \big )~ +O(1)
\end{align*}
where $X:=|\log x| \gg 1$ and $Y:=|\log y| \gg 1$, and where the error term (here and below) applies only to the capital letters.

Let $P'=P'(x,y)$ be such a pentagon.
Let $P=P(x,y)$ be the right-angled hexagon obtained as the union of $P'$ and its symmetric image across the midpoint of the edge of length $y$.
By construction, $P$ is centrally symmetric, and we can number its edges $1,\dots,6$ in a counterclockwise cyclic order so that the corresponding set of edge lengths is
$$ \big (x~,~ X+2Y~,~ X~,~ x~,~ X+2Y~,~ X \big )~ + O(1). $$
The chords $1\mathrm{-}4$, $2\mathrm{-}5$, $3\mathrm{-}6$ then have lengths, respectively,
$$ \big (2X+2Y~,~ y~,~ 2Y)~ + O(1). $$
We now take $P_1$ to be $P$ with this cyclic edge labelling, and $P_2$ (\resp $P_3$) to be $P$ with the cyclic edge labelling where all edge indices have been shifted by~$1$ (\resp $2$) modulo 6.
Omitting the $O(1)$ error terms, we can now list all lengths:
$$\begin{array}{c|ccc|ccc}
&  \hspace{-1cm} \mathrlap{\text{edges (symmetric pairs):}} &&&& \text{chords:} & \\ 
& 1 \text{ or  } 4 & 2 \text{ or  } 5 & 3 \text{ or  } 6& 1\mathrm{-}4 & 2\mathrm{-}5 & 3\mathrm{-}6 \\ \hline
\overset{~}{P_1} & x & X+2Y & X & 2X+2Y & y & 2Y \\
P_2 &X & x & X+2Y & 2Y & 2X+2Y & y \\
P_3 & X+2Y & X & x & y & 2Y & 2X+2Y \\
\end{array}$$
The columns of this table are (unnormalized) barycentric coordinates in the projectivized Weyl chamber $\PP \mathfrak{a}^+ = \PP(\RR^3_{\geq 0})$.
They form the vertices of a hexagon $\Pi$ with order-3 rotational symmetry, whose vertices are all very close to sides of $\PP \mathfrak{a}^+$ (because $x,y \ll 1 \ll X,Y$).
For $y=x\ll1$, azimuthality \emph{fails}: the side of $\Pi$ connecting the third and fourth (projectivized) columns of the table runs close and near-parallel to the side $[*:*:0]$ of the triangle $\PP\mathfrak{a}^+$, cutting off two vertices instead of one.
Instead, we take $y=\delta x$ for some $\delta \in \RR_{>0} \smallsetminus [2/3,4]$: then $Y=X+O(1)$, and one easily checks that the hexagon $\Pi$ is azimuthal when $x\ll 1$.

As in Section~\ref{subsec:strategy-N-gon}, let $\boldsymbol{\rho} = (\rho_{P_1}, \rho_{P_2}, \rho_{P_3}) : \pi_1(S_P)\to (\PSL_2\RR)^3$ be the multi-Fuchsian representation of $\pi_1(S_P)$ associated to the triple $(P_1,P_2,P_3)$ of right-angled hyperbolic hexagons with cyclic edge labellings.
By construction, the projectivized simple hull $\PP \Lambda_{\bsrho}^s$ of~$\bsrho$ is the azimuthal hexagon~$\Pi$.
It is equal to the full projectivized limit cone $\PP \Lambda_{\bsrho}$ by Theorem~\ref{thm:azimut-limit-cone}.
\end{proof}

\smallskip

Multiplicatively adjusting, as in the proof just above, a short chord $x$ or $y$ to move the corresponding vertex of $\Pi$  towards or away from the nearest side of $\PP \mathfrak{a}^+$, without much affecting its position along that side (recorded by $X,Y$), will be useful again in arbitrary genus to generate convexity and azimuthality, as we now explain.

%%%%%%%%%%%%%%%%%%%%%%%%%
\subsection{Arbitrary genus: $(2g+2)$-gons} \label{subsec:g-gon}

We now prove Theorem~\ref{thm:N-gon} in arbitrary genus $g\geq 2$.
Let $P$ be a right-angled convex hyperbolic $(2g+2)$-gon with a cyclic edge labelling.
We can decompose $P$ into a chain of $2g-2$ right-angled pentagons $Q_1$, \dots, $Q_{2g-2}$, with $Q_k$ and $Q_{k+1}$ sharing exactly one edge $e_k$ for $1\leq k < 2g-2$.
The segments $e_k$ are chords of~$P$.
We will assume that for each $1<k<2g-2$, in the pentagon~$Q_k$, the side $e_k$ comes two sides clockwise after $e_{k-1}$ if $k$ is even, counterclockwise if $k$ is odd.
We extend this definition to $k=1$ and $k=2g-2$ to define edges $e_0$ of $Q_1$ and $e_{2g-2}$ of $Q_{2g-2}$.
We will use the lengths $x_0, \dots, x_{2g-2}$ of $e_0, \dots, e_{2g-2}$ as parameters for the deformation space of the cyclically labelled right-angled $(2g+2)$-gon $P=P(x_0, \dots, x_{2g-2})$.
When all the $x_k$ are small, any chord or edge $c$ of $P$ that is not an $e_k$ has length
$$ \psi_c (|\log x_0|,~\dots~, |\log x_{2g-2}|)+ O(1)$$ 
for some linear form $\psi_c \in (\RR^{2g-1})^*$ with coefficients in~$\NN$.
(The $O(1)$ error terms depend on the genus $g$.)
See Figure~\ref{fig:zigzag}.

\begin{figure}[h!] 
\captionsetup{width=0.9\linewidth}
\labellist
\small\hair 2pt
% 		G R I D    F O R    P L A C I N G    L A B E L S
%\grille{0} \grille{2} \grille{4} \grille{6} \grille{8} \grille{10} \grille{12} \grille{14} \grille{16} \grille{18}
%\grille{20} \grille{22} \grille{24} \grille{26} \grille{28} \grille{30} \grille{32} \grille{34} \grille{36} \grille{38} 
%\grille{40} \grille{42} \grille{44} \grille{46} \grille{48} \grille{50} \grille{52} \grille{54} \grille{56} \grille{58} 
%
\pinlabel {${}_{Q_1}$} [c] at			4.2	6.8
\pinlabel {${}_{Q_2}$} [c] at			10.5	2.3
\pinlabel {${}_{Q_{2g-2}}$} [c] at		45.4	2.3
\pinlabel {${}_{x_0}$} [c] at			0.5	4.1
\pinlabel {${}_{x_1}$} [c] at			7.95	4.2
\pinlabel {${}_{x_{2g-2}}$} [c] at			50.2	4.6
\pinlabel {$c$} [c] at					17.6	6.8
\pinlabel {$(\cdot \cdot \cdot)$} [c] at		31.5	4.5
\pinlabel {\rotatebox{45}{${}_{\sim |\log x_0|}$}} [c] at		1.4	7.7
\pinlabel {\rotatebox{315}{${}_{\sim |\log x_1|}$}} [c] at	6.9	7.6
\pinlabel {\rotatebox{315}{${}_{\sim |\log x_1|}$}} [c] at	8	1.5
%%%%%%%%%%%%%%%%%%%%%%%%%
\endlabellist
\includegraphics[width = \textwidth]{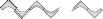}
\caption{A thin right-angled convex hyperbolic $(2g+2)$-gon $P(x_0, \dots, x_{2g-2})$, with a chord $c$ (dotted)}
\label{fig:zigzag}
\end{figure}

For $j\in\{1,2\}$, we denote by
$$P[j] = P(x_0, \dots, x_{2g-2})[j]$$
the cyclically labelled right-angled convex hyperbolic $(2g+2)$-gon obtained from $P$ by shifting all indices of the cyclic edge labelling by~$j$ units.
Fix positive reals $\alpha_0, \dots, \alpha_{2g-2}$ such that all the $\beta_c := \psi_c(\alpha_0, \dots, \alpha_{2g-2})$ are pairwise incommensurable (\eg take the $\alpha_k$ independent over $\QQ$).
Next, choose a small number $x>0$ and define three right-angled $(2g+2)$-gons:
$$ P_1^\circ:=P(x,\dots, x) \quad \quad
P_2^\circ:=P(x,\dots, x)[1] \quad \quad
P_3^\circ:=P(x^{\alpha_0}, \dots, x^{\alpha_{2g-2}})[2]. $$
The shifts in the cyclic edge labelling will cause any chord or edge of $P_i^\circ$ that is short (length $x$ or $x^{\alpha_k}$) to become long in the other two $P_\ell^\circ$.
Ultimately $P_1, P_2, P_3$ will be a small perturbation of $P_1^\circ, P_2^\circ, P_3^\circ$, chosen to ensure azimuthality. 

\smallskip

Here are the details; we refer to Figure~\ref{fig:manygon}.
Let $X:=|\log x| \gg 1$.
For each $0\leq k\leq 2g-2$, the lengths of $e_k$ in the metrics $P_1^\circ, P_2^\circ, P_3^\circ$ form a positive triple of the form
$$v_k = \begin{pmatrix} x \\ \nu_k X + O(1) \\ \beta_k X + O(1) \end{pmatrix} \quad \text{ as } x \to 0,$$
where $\nu_k$ is a fixed positive integer and $\beta_k = \psi_{c_k}(\alpha_0, \dots, \alpha_{2g-2})$ a fixed linear combination of $\alpha_0, \dots, \alpha_{2g-2}$ with coefficients in $\NN$.
The positive ratios $\beta_k/\nu_k$ are all distinct, because the $\beta_k$ are pairwise incommensurable.

Similarly, the $2g-1$ short edges and chords of $P_2^\circ$ have length triples
$$v'_k = \begin{pmatrix}  \nu'_k X + O(1) \\ x \\  \beta'_k X + O(1) \end{pmatrix}$$
where $\nu'_k$ is a fixed positive integer and $\beta'_k$ a fixed linear combination of $\alpha_0, \dots, \alpha_{2g-2}$ over~$\NN$.
Again, the positive ratios $\beta'_k/\nu'_k$ are all distinct.

Finally, the $2g-1$ short edges and chords of $P_3^\circ$ have length triples
$$v''_k = \begin{pmatrix}  \nu''_k X + O(1) \\ \nu'''_k X + O(1) \\  x^{\alpha_k} \end{pmatrix}$$
where $\nu''_k$ and $\nu'''_k$ are fixed positive integers. 
The ratios $\nu''_k/\nu'''_k$ may not be all distinct, but one of the bottom entries $x^{\alpha_{k_0}}$ is much smaller than all others as $x \to 0$.

\begin{figure}[h!] 
\captionsetup{width=0.9\linewidth}
\labellist
\small\hair 2pt
% 		G R I D    F O R    P L A C I N G    L A B E L S
%\grille{0} \grille{2} \grille{4} \grille{6} \grille{8} \grille{10} \grille{12} \grille{14} \grille{16} \grille{18}
%\grille{20} \grille{22} \grille{24} \grille{26} \grille{28} \grille{30} \grille{32} \grille{34} \grille{36} \grille{38} 
%\grille{40} \grille{42} \grille{44} \grille{46} \grille{48} \grille{50} \grille{52} \grille{54} \grille{56} \grille{58} 
%
\pinlabel {${}_{[v_{\sigma(0)}]}$} [c] at			3.2	2
\pinlabel {${}_{[v_{\sigma(1)}]}$} [c] at			3.45	3
\pinlabel {${}_{[v_{\sigma(2)}]}$} [c] at			4	4.3
\pinlabel {$\rotatebox{45}{\dots}$} [c] at			4.65	5.85
\pinlabel {${}_{[v''_{k_0}]}$} [c] at				5.9	0.7
\pinlabel {${}_{[1:1:1]}$} [c] at					6	3.8
\pinlabel {$\PP \mathfrak{a}^+$} [c] at			3.2	0.4
\pinlabel {$\PP \Lambda_{\boldsymbol{\rho}}$} [c] at	7.5	1.5
\pinlabel {$I_{\rho_{P_1}}$} [c] at				2.5	5.5
\pinlabel {$I_{\rho_{P_2}}$} [c] at				9.5	5.5
\pinlabel {$I_{\rho_{P_3}}$} [c] at				8	-0.35
%%%%%%%%%%%%%%%%%%%%%%%%%
\endlabellist
\includegraphics[width = 8cm]{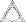}
\vspace{2mm}
\caption{The projectivized limit cone $\PP \Lambda_{\bsrho} = \PP \Lambda_{\bsrho}^s$ of a multi-Fuchsian representation $\boldsymbol{\rho} = (\rho_{P_1}, \rho_{P_2}, \rho_{P_3}) : \pi_1(S_P)\to (\PSL_2\RR)^3$ as in Section~\ref{subsec:g-gon}. It is a polygon with $\geq 4g-1$ sides. Here $g=3$.}
\label{fig:manygon}
\end{figure}

It follows from the above that the projectivizations $[v_0], \dots, [v_{2g-2}]$ converge to distinct interior points $[v_k^\infty]:=[0:\nu_k:\beta_k]$ of the first side $I_{\rho_{P_1}}$ of the triangle $\PP \mathfrak{a}^+$, as $x\rightarrow 0$. 
We can read out the limit points $[v_{\sigma(0)}^\infty], \dots, [v_{\sigma(2g-2)}^\infty]$ in linear order along $I_{\rho_{P_1}}$, for an appropriate permutation $\sigma$ of $\{0,\dots, 2g-2\}$.
Moreover, the $[v_k]$ are all roughly at the same distance $\frac{x}{X}=\frac{x}{|\log x|} \ll 1$ from the segment $I_{\rho_{P_1}}$, up to factors bounded in terms of $g$.
Therefore, up to adjusting the short edges and chords of the polygon $P_1^\circ$ by bounded multiplicative amounts (which does not affect the limits $[v_k^\infty]$), we may ensure that the polygonal line $([v_{\sigma(0)}], \dots, [v_{\sigma(2g-2)}])$ in $\PP \mathfrak{a}^+$ bounds a convex polygon which is azimuthal (Definition~\ref{def:azim-polygon}).
Let $P_1$ be this adjustment of $P_1^\circ$.

Similarly, adjusting $P_2^\circ$ to~$P_2$, we may ensure that the $[v'_k]$ define a polygonal line bounding an azimuthal convex polygon near the second side $I_{\rho_{P_2}}$ of the projectivized Weyl chamber $\PP \mathfrak{a}^+$.
Up to scaling the initial tuple $(\alpha_0, \dots, \alpha_{2g-2})$, which essentially scales the third coordinate of the limiting vectors $[v_k^\infty]=[0:\nu_k:\beta_k]$ and~$[{v'_k}^\infty]=[\nu'_k:0:\beta'_k]$, we may also assume that the center $[1:1:1]$ of the Weyl chamber $\PP\mathfrak{a}^+$ lies in the interior of the convex hull of $\{[v_k]\}_{i=0}^{2g-2} \cup \{[v'_k]\}_{i=0}^{2g-2}$.

These adjustments do not change the property that one of the $[v''_k]$ is much closer to the third side $I_{\rho_{P_3}}$ than all others, causing the convex hull of all the $[v_k], [v'_k], [v''_k]$ to be an azimuthal convex polygon $\Pi$ with at least $2(2g-1)+1=4g-1$ vertices.
The Jordan projection of any edge or chord that is short in \emph{none} of $P_1, P_2, P_3=P_3^\circ$ is at least some constant away from the sides of $\PP \mathfrak{a}^+$, and therefore cannot decrease the number of sides or destroy azimuthality of~$\Pi$.
Thus, $\Pi = \PP \Lambda_{\bsrho}^s \subset \PP \mathfrak{a}^+$ is an azimuthal convex polygon with at least $4g-1$ sides.

As in Section~\ref{subsec:strategy-N-gon}, let $\boldsymbol{\rho} = (\rho_{P_1}, \rho_{P_2}, \rho_{P_3}) : \pi_1(S_P)\to (\PSL_2\RR)^3$ be the multi-Fuchsian representation of $\pi_1(S_P)$ associated to the triple $(P_1,P_2,P_3)$ of right-angled hyperbolic $(2g+\nolinebreak 2)$-gons with cyclic edge labellings.
By construction, the projectivized simple hull $\PP \Lambda_{\bsrho}^s$ of~$\bsrho$ is the azimuthal convex polygon~$\Pi$.
This polygon is equal to the full projectivized limit cone $\PP \Lambda_{\bsrho}$ by Theorem~\ref{thm:azimut-limit-cone}.
This completes the proof of Theorem~\ref{thm:N-gon}.

\begin{remark}
The method extends to $(\PSL_2\RR)^d$ for $d>3$, with $d-1$ different shifts of cyclic edge labellings, to make polyhedra $\Pi:=\mathrm{Conv}\{[\boldsymbol{\lambda_\rho}(\gamma)]\}_{\gamma \in \,  \mathsf{Chords} \, \cup \, \mathsf{Edges}}$ with at least $(d-1)(2g-1)+1$ vertices when $d\leq g+1$. 
However, in this setup every chord and edge is short in at most one component $\rho_{P_i}$, so every $[\boldsymbol{\lambda_\rho}(\gamma)]$ is close to at most one facet of the simplex~$\PP\mathfrak{a}^+$. 
To conclude without further analysis that $\Pi$ must be azimuthal (and therefore equal to $\PP \Lambda_{\boldsymbol{\rho}}$) would require some $[\boldsymbol{\lambda_\rho}(\gamma)]$ near every \emph{edge} of $\PP\mathfrak{a}^+$. 
Modifying the $\rho_{P_i}$ in such a fashion may be possible, but we do not pursue that goal here. 
\end{remark}

%%%%%%%%%%%%%%%%%%%%%%%%%%%%%%%%%%%%%%%%%%%%%%%%%%%
\section{Strictly convex limit cones} \label{sec:infinite-sided}

In this section we take $S$ to be a one-holed torus, so that the fundamental group $\pi_1(S)$ is a nonabelian free group $\mathbb{F}_2$ of rank two.
Our goal is to construct multi-Fuchsian representations $\boldsymbol{\rho} : \pi_1(S) \rightarrow (\PSL_2\RR)^3$ such that every simple closed curve in~$S$ defines an extremal point in $\PP \Lambda_{\boldsymbol{\rho}}$, thus proving Theorem~\ref{thm:fishy}.

For this we start by considering a (Fuchsian) representation $\rho : \pi_1(S)\to\PSL_2 \RR$ which is the holonomy representation of an infinite-volume complete hyperbolic structure on~$S$.
The simple closed curves in~$S$ can be generated iteratively from one another by taking certain products in $\pi_1(S)$, in a process we explain below.
Our first result (Proposition~\ref{prop:honey}) will say that $\rho$-lengths behave \emph{very slightly sub-additively} as this process advances, with an accurate estimate of the defect.
Then in Proposition~\ref{prop:toycone}, we quantitatively show that $\rho$-length defines a norm with \emph{strictly} convex unit ball on the first homology group $\mathrm{H}_1(S, \RR) \simeq \RR^2$. 
This can be connected to estimates of \cite{zag82, gue-toulouse} and stable norm in negative curvature~\cite{bangert, baba}; see also Mirzakhani's curve counting results in~\cite{mir08}.
In Proposition~\ref{prop:homoclinic} we use a version of the same estimates for three representations at once in order to prove Theorem~\ref{thm:fishy}.

%%%%%%%%%%%%%%%%%%%%%%%%%
\subsection{Markoff maps}

For the punctured torus $S$, as shown by Bowditch~\cite{bow98}, a representation $\hat{\rho} : \pi_1(S)=\mathbb{F}_2 \to \SL_2\RR$ can be captured by a \emph{Markoff map}, which is the assignment of a real number $t_Q$ to each complementary component $Q \subset \mathbb{H}^2$ of the infinite trivalent Markoff tree, or Farey tree.
(This tree is dual to the Farey triangulation, obtained from the triangle $(01\infty) \subset \HH^2$ by iterated reflections in its sides: see Figure~\ref{fig:farey}.)
Namely,
\begin{equation} \label{eqn:t_Q}
t_Q:= \mathrm{Tr} (\rho(\gamma_Q))
\end{equation}
where $\gamma_Q \in \mathbb{F}_2$ is a word representing the simple closed curve of $S\simeq (\RR^2\smallsetminus \ZZ^2)/\ZZ^2$ whose \emph{slope} is the unique rational of $\PP^1 \QQ \subset \PP^1 \RR \simeq \partial_\infty \mathbb{H}^2$ lying at infinity of $Q$.
The trace identity $\mathrm{Tr}(x)\mathrm{Tr}(y)=\mathrm{Tr}(xy)+\mathrm{Tr}(xy^{-1})$ in $\SL_2\RR$ ensures that for any two adjacent regions $A,B$ sharing common neighbors $C$ and $D$, we have $t_A t_B = t_C+t_D$.
In this way, the Markoff map $t$ is entirely determined by its values $(t_A, t_B, t_C)$ on a triple of mutually adjacent regions.
When $\hat{\rho}$ is a lift (from $\PSL_2\RR$ to $\SL_2\RR$) of the holonomy representation of an infinite-volume complete hyperbolic structure on~$S$, we have $|t_Q|>2$ for all regions~$Q$, and we may in fact choose the lift so that $t_Q>2$ for all~$Q$, see~\cite{bow98}.
We shall assume this from now on.

\begin{figure}[h!]
\captionsetup{width=0.9\linewidth}
\labellist
\small\hair 2pt
\pinlabel {$0$} [c] at     	0.5 		3.7
\pinlabel {$1$} [c] at        	14.6 		3.7
\pinlabel {$\infty$} [c] at   	7.6 		15.4
\pinlabel {$-1$} [c] at       	0.5 		11.6
\pinlabel {$1/2$} [c] at      	7.7 		-0.4
\pinlabel {$2$} [c] at        	14.4 		11.6
\pinlabel {$A$} [c] at        	7.4 		10
\pinlabel {$B$} [c] at        	5.4 		6.0
\pinlabel {$C$} [c] at        	9.6 		6.0
\pinlabel {$D$} [c] at        	3.3 		10
\endlabellist
\includegraphics[width = 7cm]{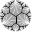}
\caption{The Markoff tree (black) superimposed on the Farey triangulation (white) in the hyperbolic plane (gray)}
\label{fig:farey}
\end{figure} 

For shorthand, let us use the names of the regions $A,B,C,D$ for the reals $t_A, t_B, t_C, t_D$ themselves.
Let $a,b,c,d>0$ be such that
\begin{equation*}
(2\cosh(a), 2\cosh(b), 2\cosh(c), 2\cosh(d))=(A,B,C,D).
\end{equation*}
Let $T \in \RR$ be the trace of the commutator of (any) two elements of $\mathbb{F}_2$ that form a free basis, such as $(\gamma_A, \gamma_B)$ for example. 
The Markoff identity (see \cite[p.~338, formula~(7)]{fk97}) stipulates
\begin{equation} \label{eq:markoff}
T=A^2+B^2+C^2-ABC-2
\end{equation}
(note that $T$ depends only on the composition $\rho: \pi_1(S) \overset{\hat{\rho}}{\longrightarrow} \mathrm{SL}_2 \RR \rightarrow \mathrm{PSL}_2 \RR$, and is also invariant under all outer automorphisms of $\mathbb{F}_2$).
The Markoff identity \eqref{eq:markoff} can be rewritten
\begin{equation} \label{eq:markoff2}
 2-T = \frac{(\mathrm{e}^{a+b}-\mathrm{e}^c) (\mathrm{e}^{b+c}-\mathrm{e}^a) (\mathrm{e}^{c+a}-\mathrm{e}^b) (\mathrm{e}^{a+b+c}-1)}{\mathrm{e}^{2(a+b+c)}}.
 \end{equation}
The group of outer automorphisms of $\mathbb{F}_2$ identifies with $\SL_2\ZZ$, acting naturally on the Markoff tree.

%%%%%%%%%%%%%%%%%%%%%%%%%
\subsection{Hyperbolic triangles and slacks}

Consider a hyperbolic triangle $\tau \subset \mathbb{H}^2$ with side lengths $a,b,c>0$.
The group generated by the $\pi$-rotations around the vertices of $\tau$ has an index-two subgroup $F_\tau$ generated by the hyperbolic translations $\alpha,\beta,\gamma$ along the sides of $\tau$, of respective lengths $2a$, $2b$, $2c$.
We have $\gamma\beta\alpha = 1$, and generically this is the only relation, hence $F_\tau$ is a nonabelian free group of rank two.
Using notation \eqref{eqn:t_Q}, we can identify $([\alpha^{\pm 1}], [\beta^{\pm 1}], [\gamma^{\pm 1}])$ with $([\gamma_A^{\pm 1}], [\gamma_B^{\pm 1}], [\gamma_C^{\pm 1}])$ for three pairwise adjacent complementary regions $A,B,C$ of the Markoff tree: such triples $\{A,B,C\}$ stand in natural bijection with Markoff tree vertices, and also with Farey triangles.
The \emph{slack} between the generators $\alpha$ and~$\beta$, by which we mean the sum of their lengths minus the length of their product, is twice the defect of the triangle inequality:
$$\mathsf{slack} = 2(a + b - c).$$
By \eqref{eqn:slack-rope}, this slack approaches the asymptotic slack \eqref{eqn:slack-Gromov-bound2} of the oriented geodesic axes of $\alpha$ and~$\beta$ when the sides $a$ and $b$ are very large and form small angles with the third side.

It is a hyperbolic trigonometry exercise to check that the translation length $\ell =\linebreak 2\,\arccosh(-T/2)$ of the commutator $\alpha \beta \alpha^{-1} \beta^{-1}$ satisfies 
\begin{equation} \label{eq:trigo}
\cosh (\ell/4) = \sinh (c) \, \sinh (h_c) ,
\end{equation} 
where $h_c$ is the height of $\tau$ perpendicular to the side~$c$.
(When the commutator $\alpha\beta\alpha^{-1}\beta^{-1}$ is parabolic or elliptic of angle $\theta \in [0,2\pi)$, we can set $\ell =\mathbf{i} \theta$ and $\cosh(\ell/4) = \cos(\theta/4) \in (0,1]$.
Topologically, $F_\tau \backslash \mathbb{H}^2$ is a one-holed torus if $\ell \geq 0$.)

A change of generator pair in $F_\tau$ will correspond to other hyperbolic triangles, all yielding the same value of~\eqref{eq:trigo}, hence these triangles become exponentially thin ($h_c \rightarrow 0$) as their largest side $c$ increases.
Thus, $\mathsf{slack}=2(a+b-c)$ also decreases exponentially as $c\to +\infty$.

Specifically, if $\tau'$ is obtained from the triangle $\tau$ by replacing one of its vertices by its symmetric image with respect to another vertex (and keeping the third vertex fixed), then $F_{\tau'}=F_\tau$: we call $\tau'$ a \emph{mutation} of~$\tau$. 
A mutation corresponds to a step in the Markoff tree, \eg from the vertex $\overline{A}\cap \overline{B}\cap \overline{C}$ to $\overline{A}\cap \overline{B}\cap \overline{D}$.
For example in the left panel of Figure~\ref{fig:lanes} below, the dark grey triangle $\tau_0$ mutates (in one of three possible ways) to the yellow one~$\tau_1$.

Proposition~\ref{prop:honey} below will formalize the idea that mutations cause triangles to flatten and slacks to decrease, very fast.

%%%%%%%%%%%%%%%%%%%%%%%%%
\subsubsection{Thin triangles} \label{sec:thintriangles}

Choose a small $\varepsilon >0$.
There exists a small $\delta >0$ with the following property: for any triangle $\tau$ in $\mathbb{H}^2$ with all three angles $\leq \delta$, if $a, b, c$ are the side lengths of~$\tau$ (necessarily all large), then $\mathrm{e}^a\leq \varepsilon \mathrm{e}^{b+c}$ and $\mathrm{e}^b \leq \varepsilon \mathrm{e}^{c+a}$ and $\mathrm{e}^c \leq \varepsilon \mathrm{e}^{a+b}$ and $1 \leq \varepsilon \mathrm{e}^{a+b+c}$. 
If $\tau$ is such a triangle, then~\eqref{eq:markoff2} yields
\begin{equation}
(1-\varepsilon)^4\,\mathrm{e}^{a+b+c} \leq 2-T \leq \mathrm{e}^{a+b+c} . \label{eq:313}
\end{equation}
Next, consider a sequence $\tau=\tau_0, \tau_1, \tau_2, \dots$ obtained by a succession of mutations (with no backtracking).
By an immediate induction, for every $n\geq 1$, the largest angle of the triangle $\tau_n$ is $\geq \pi-\delta$ and is an increasing function of $n$, limiting to $\pi$ as $n \rightarrow \infty$. 
The side lengths also grow under mutation.
Therefore, if we call $(a_n, b_n, c_n)$ the side lengths of~$\tau_n$, with $c_n$ the largest of the three, then up to taking $\delta$ small enough we have for all $n\geq 1$
\begin{align}
(1-\varepsilon)\,\mathrm{e}^{a_n+b_n} & \leq \mathrm{e}^{c_n} \leq \mathrm{e}^{a_n+b_n} , \label{eq:314} \\ 
\mathrm{e}^{b_n} \leq \varepsilon \mathrm{e}^{a_n+c_n} \quad \text{and} \quad & \mathrm{e}^{a_n} \leq \varepsilon \mathrm{e}^{b_n+c_n} \quad \text{and} \quad 1 \leq \varepsilon \mathrm{e}^{a_n+b_n+c_n} .
\label{eq:315}
\end{align}
By \eqref{eq:markoff2}--\eqref{eq:315} we have $ (1-\varepsilon)^3 (\mathrm{e}^{a_n+b_n} - \mathrm{e}^{c_n}) \mathrm{e}^{c_n} \leq 2-T \leq (\mathrm{e}^{a_n+b_n} - \mathrm{e}^{c_n}) \mathrm{e}^{c_n}$, hence
$$ (2-T) \mathrm{e}^{-2c_n} \,\leq\, \mathrm{e}^{a_n+b_n-c_n} -1 \,\leq\, (1-\varepsilon)^{-3} \, (2-T) \mathrm{e}^{-2c_n}.$$
Further bounding $2-T$ and $\mathrm{e}^{c_n}$ by~\eqref{eq:313} and~\eqref{eq:314}, we find
\begin{equation} \label{eq:316}
(1-\varepsilon)^4 \, \mathrm{e}^{a_0+b_0+c_0-2(a_n+b_n)} \,\leq\, \mathrm{e}^{a_n+b_n-c_n} -1 \,\leq\, (1-\varepsilon)^{-5} \, \mathrm{e}^{a_0+b_0+c_0-2(a_n+b_n)} .
\end{equation}
Since $a_1$ and $b_1$ belong to $\{a_0, b_0, c_0\}$, the bounds~\eqref{eq:316} are at most of the order of $\varepsilon$ for $n=1$, and much smaller for $n\geq 2$, and the slack $2(a_n+b_n-c_n)$ behaves similarly, as summarized by the following proposition (see Figure~\ref{fig:lanes}).

\begin{proposition} \label{prop:honey}
For every $\varepsilon>0$, there exists $\delta>0$ such that for any hyperbolic triangle $\tau_0$ with inner angles $\leq \delta$, and any non-backtracking sequence of mutations $(\tau_0, \tau_1, \tau_2 \dots)$, if $\tau_n$ has side lengths $a_n \leq b_n \leq c_n$, then 
$$ c_n = a_n + b_n - K \, \mathrm{e}^{-2(a_n +b_n)} \, (1+\varepsilon_n)$$  
for every $n\geq 1$, where $K:=\mathrm{e}^{a_0 + b_0 + c_0}$ and $|\varepsilon_n| \leq \varepsilon$.
Moreover, the ``slack'' term\linebreak $K \, \mathrm{e}^{-2(a_n +b_n)} \, (1+\varepsilon_n)$ is bounded by $\varepsilon$ for all $n\geq 1$ (not just large $n$). \qed
\end{proposition}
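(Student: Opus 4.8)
The plan is to obtain the statement by packaging the estimates already derived in Section~\ref{sec:thintriangles}; beyond the chain \eqref{eq:313}--\eqref{eq:316} the only things to add are the passage from $\mathrm{e}^{a_n+b_n-c_n}-1$ to $a_n+b_n-c_n$ by a logarithm and the bookkeeping needed to let the single parameter $\varepsilon$ govern all error terms. Concretely, I would run the construction of Section~\ref{sec:thintriangles} with an auxiliary parameter $\varepsilon'>0$ in place of~$\varepsilon$, to be fixed at the end in terms of the target~$\varepsilon$; this produces a $\delta=\delta(\varepsilon')>0$ such that any $\tau_0$ with inner angles $\leq\delta$, together with any non-backtracking mutation sequence $(\tau_0,\tau_1,\dots)$, satisfies \eqref{eq:313}--\eqref{eq:316}. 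Writing $u_n:=K\,\mathrm{e}^{-2(a_n+b_n)}$ with $K=\mathrm{e}^{a_0+b_0+c_0}$, the estimate~\eqref{eq:316} reads $(1-\varepsilon')^4\,u_n\leq \mathrm{e}^{a_n+b_n-c_n}-1\leq(1-\varepsilon')^{-5}\,u_n$ for all $n\geq 1$.

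The next step is to check that $u_n\leq\varepsilon'$ for every $n\geq 1$. A mutation keeps two of the three sides of the preceding triangle and replaces the third by a longer one (Section~\ref{sec:thintriangles}), so $\{a_1,b_1\}$ is a two-element subset of $\{a_0,b_0,c_0\}$; hence $a_0+b_0+c_0-2(a_1+b_1)=s-(a_1+b_1)<0$ by the triangle inequality in~$\tau_0$, where $s$ is the replaced side of~$\tau_0$, and $u_1=\mathrm{e}^{s-(a_1+b_1)}$ is exactly bounded by $\varepsilon'$ via the appropriate one of the initial conditions $\mathrm{e}^{a_0}\leq\varepsilon'\mathrm{e}^{b_0+c_0}$, $\mathrm{e}^{b_0}\leq\varepsilon'\mathrm{e}^{a_0+c_0}$, $\mathrm{e}^{c_0}\leq\varepsilon'\mathrm{e}^{a_0+b_0}$ imposed in Section~\ref{sec:thintriangles}. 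For $n\geq 2$ the side lengths increase strictly along the mutation sequence, so $a_n+b_n$ is increasing in~$n$ and therefore $u_n\leq u_1\leq\varepsilon'$ (in fact $u_n$ decays even faster than geometrically).

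Finally I would linearize. Writing $\mathrm{e}^{a_n+b_n-c_n}=1+\theta_n u_n$ with $\theta_n\in[(1-\varepsilon')^4,(1-\varepsilon')^{-5}]$, the quantity $\theta_n u_n\leq(1-\varepsilon')^{-5}\varepsilon'$ is small, so taking logarithms gives $a_n+b_n-c_n=\log(1+\theta_n u_n)=\theta_n u_n\bigl(1+O(\theta_n u_n)\bigr)=u_n(1+\varepsilon_n)$, where $1+\varepsilon_n=\theta_n(1+O(\varepsilon'))$ and hence $|\varepsilon_n|=O(\varepsilon')$ uniformly in~$n$. Choosing $\varepsilon'$ small enough in terms of~$\varepsilon$ forces $|\varepsilon_n|\leq\varepsilon$, which is precisely the asserted identity $c_n=a_n+b_n-K\,\mathrm{e}^{-2(a_n+b_n)}(1+\varepsilon_n)$; the ``moreover'' is then immediate, since the slack term $u_n(1+\varepsilon_n)\leq 2\varepsilon'\leq\varepsilon$ for all $n\geq 1$.

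There is no analytic obstacle: the substance is entirely contained in \eqref{eq:313}--\eqref{eq:316}. The only delicate point is making one parameter $\varepsilon$ control every error at once, which is why I would pass to the auxiliary $\varepsilon'$ and fix it only after all the $O(\varepsilon')$-estimates are collected, and why the smallness $u_1\leq\varepsilon'$ must be read off directly from the initial conditions: for $n=1$ the bound~\eqref{eq:316} is only ``of order $\varepsilon'$'' rather than exponentially small, so it is this case --- not the rapidly decaying tail $n\geq 2$ --- that dictates how small $\varepsilon'$, hence $\delta$, has to be.
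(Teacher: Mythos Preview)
Your proposal is correct and follows exactly the paper's approach: the proposition is stated with a \qed\ and is meant to be read as a direct summary of the chain \eqref{eq:313}--\eqref{eq:316} together with the sentence immediately preceding it (``Since $a_1$ and $b_1$ belong to $\{a_0,b_0,c_0\}$, the bounds~\eqref{eq:316} are at most of the order of~$\varepsilon$ for $n=1$\dots''). You have simply made explicit the two steps the paper leaves implicit --- the logarithm passing from $\mathrm{e}^{a_n+b_n-c_n}-1$ to $a_n+b_n-c_n$, and the auxiliary-$\varepsilon'$ bookkeeping --- and your verification that $u_1\leq\varepsilon'$ via the initial thin-angle inequalities, together with the monotonicity $a_{n+1}+b_{n+1}>a_n+b_n$ for $n\geq 1$, is exactly what is needed.
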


\begin{figure}[h!]
\captionsetup{width=0.9\linewidth}
\labellist
\small\hair 2pt
% 		G R I D    F O R    P L A C I N G    L A B E L S
%\grille{0} \grille{2} \grille{4} \grille{6} \grille{8} \grille{10} \grille{12} \grille{14} \grille{16} \grille{18}
%\grille{20} \grille{22} \grille{24} \grille{26} \grille{28} \grille{30} \grille{32} \grille{34} \grille{36} \grille{38} 
%\grille{40} \grille{42} \grille{44} \grille{46} %\grille{48} \grille{50} \grille{52} \grille{54} \grille{56} \grille{58} 
%
\pinlabel {$\tau_0$} [c] at		3.7	4.6
\pinlabel {${}_{\tau_1}$} [c] at	7.8	3.9
%%%%%%%%%%%%%%%%%%%%%%%%%
\endlabellist
\includegraphics[width = \textwidth]{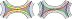}
\caption{\small Illustration of Proposition~\ref{prop:honey}.
 {\bf Left}: a triangle $\tau_0 \subset \mathbb{H}^2$ with side lengths $a_0, b_0, c_0$ and  small angles.
We show the result of a single mutation $\tau_1$ (yellow), and a fundamental domain of the convex core of $F_{\tau_0}=F_{\tau_1} < \PSL_2\RR$ (light gray, opposite short sides identified).
Shortest simple loops in $F_{\tau_0} \backslash \mathbb{H}^2$ have lengths $2a_0, 2b_0, 2c_0$ (red, green, blue).
The length of the peripheral loop is close to $2(a_0+b_0+c_0)$, as per~\eqref{eq:313}.
\newline {\bf Right}: in dotted lines, a simple closed curve obtained after a few mutations, whose length is close to some even linear combination of $a_0, b_0, c_0$.}
\label{fig:lanes}
\end{figure}

%%%%%%%%%%%%%%%%%%%%%%%%%
\subsection{One representation}

Let $\mathbb{F}_2$ be a free group of rank $2$, seen as the fundamental group of a one-holed torus $S$.
For every rational $p/q$ in the rational projective line $\PP^1\QQ = \QQ \sqcup \{ 1/0 \}$, let $\gamma_{p/q} \in \mathbb{F}_2$ be an element representing a simple closed curve in $S$ of slope $p/q$ (unique up to conjugation and inversion). 
Given two rationals $p/q$ and $p'/q'$ in reduced form, the two curves $[\gamma_{p/q}], [\gamma_{p'/q'}]$ intersect exactly once if and only if $(p/q, p'/q')$ is a Farey edge, or equivalently,  $|pq'-qp'|=1$: then, the two adjacent Farey triangles have their tips at $(p+p')/(q+q')$ and $(p-p')/(q-q')$, and the corresponding simple closed curves $[\gamma_{(p+p')/(q+q')}]$, $[\gamma_{(p-p')/(q-q')}]$ are the two resolutions of the crossing pair $\{[\gamma_{p/q}], [\gamma_{p'/q'}]\}$.

Fix $\kappa>1$, for instance $\kappa=2$.
Choose a small $\varepsilon>0$, and $\delta=\delta(\varepsilon)>0$ as in Proposition~\ref{prop:honey}.
Let $\tau$ be a hyperbolic triangle with inner angles $\leq \delta$, and side lengths $a_{}, b_{}, c_{}$ (large).
For convenience, we make the extra assumption that
\begin{equation*}
\frac{\max \, \{a_{}, b_{}, c_{} \}}{ \min \, \{a_{}, b_{}, c_{} \}} \leq \kappa.
\end{equation*}
We will write $X \sim_\kappa Y$ to say that the ratio of two quantities $X,Y$ is bounded by two positive constants depending only on $\kappa$, not on $\varepsilon$: for example, $a_{}/b_{} \sim_\kappa 1$ but $a_{} \not\sim_\kappa 1$.
Let $\rho_\tau : \pi_1(S)=\mathbb{F}_2 \rightarrow \mathrm{PSL}_2\RR$ be the holonomy representation of an infinite-volume complete hyperbolic structure on~$S$ such that 
$$ \big ( \lambda_{\rho_\tau}(\gamma_{0/1})~,~ \lambda_{\rho_\tau}(\gamma_{1/1}) ~,~ \lambda_{\rho_\tau} (\gamma_{1/0}) \big) = (2a_{}, 2b_{}, 2c_{}).$$
In general, for coprime $0\leq p \leq q$ the length  
$\lambda_{\rho_\tau}(\gamma_{p/q})$ is roughly $2(p b_{} + (q-p) a_{})$ (up to a factor close to $1$ when $\varepsilon$ is small), hence
\begin{equation} \label{eq:coarsecone} \textstyle
\lambda_{\rho_\tau}(\gamma_{p/q}) ~ \sim_\kappa  ~ 2a_{} q.
\end{equation}

We construct a map $L_\tau: \RR^2 \rightarrow \RR_{\geq 0}$ as follows:
\begin{itemize}
\item $L_\tau(q,p) = \frac{1}{2a_{}} \, \lambda_{\rho_\tau} (\gamma_{p/q})$ for all coprime $(q,p)\in \ZZ^2$;
\item homogeneity: for every $t \in \RR$ we impose $L_\tau(t q, t p) = |t|\, L_\tau (q,p)$;
\item $L_\tau$ is continuous on $\RR^2$.
\end{itemize}
The normalizing factor $\frac{1}{2a_{}}$ is there to make the $L_\tau$ roughly $(\sim_\kappa 1)$-Lipschitz along rays.
Up to this factor, $L_\tau$ is simply the length functional associated to $\rho_\tau$, restricted to measured laminations (where a coprime pair $(q,p)\in \ZZ^2$ represents the Lebesgue measure on the closed geodesic of slope~$\frac{p}{q}$). 
Thus, continuity of $L_\tau$ follows from continuity of the length function over the space of currents.
Further, the fact that resolving crossings decreases geodesic length can be used to show that $L_\tau$ is convex.
We will prove the following slightly stronger statement.

\begin{proposition} \label{prop:toycone}
The function $L_\tau:\RR^2 \rightarrow \RR_{\geq 0}$ is convex, and for every coprime pair $(q,p)\in \ZZ^2$ the ray $\{L_\tau(tq,tp)\}_{t\geq 0}$ is an extremal ray of the region above the graph of $L_\tau$, with nonunique supporting plane.
\end{proposition}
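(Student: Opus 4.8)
The plan is to establish, in order, convexity of $L_\tau$; then, for a fixed coprime $u_0=(q,p)$, extremality of the ray above the graph over $u_0$; and finally non-uniqueness of the supporting plane there --- the last point being the only delicate one.

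\textbf{Convexity.} The map $L_\tau$ is continuous and positively $1$-homogeneous, so convexity of the region above its graph amounts to subadditivity $L_\tau(w+w')\le L_\tau(w)+L_\tau(w')$; note also that $L_\tau$ is even, since $\gamma_{p/q}$ and $\gamma_{-p/-q}$ represent the same curve. When $\{u,v\}\subset\ZZ^2$ is unimodular (i.e.\ $|\det(u,v)|=1$), the simple closed geodesics $\gamma_u,\gamma_v$ cross transversally exactly once and $\gamma_{u+v}$ is one of the two resolutions of this crossing; since resolving a transverse crossing strictly shortens total geodesic length --- by \eqref{eqn:slack-rope}--\eqref{eqn:slack-angle} the length saved equals $-\log\cos^2(\theta/2)>0$, where $\theta\in(0,\pi)$ is the crossing angle --- one obtains the strict inequality
\[
L_\tau(u+v)\;<\;L_\tau(u)+L_\tau(v)\qquad\text{for every unimodular pair }\{u,v\}.\tag{$\star$}
\]
Iterating $(\star)$ along the Stern--Brocot tree gives $L_\tau(mu+nv)\le mL_\tau(u)+nL_\tau(v)$ for all integers $m,n\ge 0$ and unimodular $\{u,v\}$; $1$-homogeneity and continuity extend this to all real $m,n\ge 0$, and a final approximation (joining two arbitrary directions by a Stern--Brocot path of unimodular pairs) yields full subadditivity. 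Alternatively, one may invoke convexity of the length functional on $\mathscr{ML}(S)\cong\RR^2$.

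\textbf{Extremality.} Fix a unimodular neighbour $v$ of $u_0$ with $\det(u_0,v)=1$. The lattice points $nu_0+v$, $n\ge 0$, run along a Stern--Brocot branch descending to the direction $\RR_{\ge 0}u_0$: each pair $\{u_0,\,nu_0+v\}$ is unimodular, so $\gamma_{(n+1)u_0+v}$ is a resolution of $\gamma_{u_0}\cup\gamma_{nu_0+v}$. Suppose $L_\tau$ coincided with a linear form $\Phi$ on some open angular sector $\Sigma$ containing $\RR_{>0}u_0$. For $n$ large, both $nu_0+v$ and $(n+1)u_0+v$ point into $\Sigma$, so $(\star)$ would read $\Phi((n+1)u_0+v)<L_\tau(u_0)+\Phi(nu_0+v)$, i.e.\ $(n+1)\Phi(u_0)+\Phi(v)<\Phi(u_0)+n\Phi(u_0)+\Phi(v)$ --- absurd. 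Hence $L_\tau$ is linear on no open sector around $\RR_{>0}u_0$. Since $L_\tau(q,p)=\tfrac1{2a}\,\lambda_{\rho_\tau}(\gamma_{p/q})>0$, the ray $R:=\{(tq,tp,tL_\tau(q,p)):t\ge 0\}$ is a genuine boundary ray of the region above the graph, and such a ray fails to be extremal exactly when $L_\tau$ is linear on an open sector around $\RR_{>0}(q,p)$; therefore $R$ is an extremal ray.

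\textbf{Non-uniqueness of the supporting plane.} Put $\eta_n^{+}:=L_\tau(u_0)+L_\tau(nu_0+v)-L_\tau((n+1)u_0+v)$, which is positive by $(\star)$ and equals $\tfrac1{2a}$ times the slack of the corresponding Farey triple (the curve $\gamma_{(n+1)u_0+v}$ being the product, hence longest, side). Telescoping, $L_\tau(Nu_0+v)-NL_\tau(u_0)=L_\tau(v)-\sum_{n<N}\eta_n^{+}\to c_v:=L_\tau(v)-G^{+}$, with $G^{+}:=\sum_{n\ge 0}\eta_n^{+}\in(0,+\infty)$; and $c_v=D^{+}L_\tau(u_0;v)$ is the one-sided directional derivative of the convex function $L_\tau$, since $t\mapsto t^{-1}(L_\tau(u_0+tv)-L_\tau(u_0))$ is non-increasing as $t\downarrow 0$ and $L_\tau(u_0+\tfrac1N v)=L_\tau(u_0)+\tfrac1N(L_\tau(Nu_0+v)-NL_\tau(u_0))$. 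Running the same argument along the branch $nu_0-v$ yields $D^{+}L_\tau(u_0;-v)=c_{-v}:=L_\tau(v)-G^{-}$ with $G^{-}\in(0,+\infty)$. The supporting planes of the region above the graph along $R$ are precisely the graphs of the linear forms $\Phi\le L_\tau$ with $\Phi(u_0)=L_\tau(u_0)$, and $\Phi\mapsto\Phi(v)$ identifies this family with the interval $[-c_{-v},\,c_v]$; so the supporting plane is non-unique exactly when $c_v+c_{-v}=2L_\tau(v)-G^{+}-G^{-}>0$. To see this: the Farey triangles sharing the vertex $u_0$ form (two tails of) non-backtracking mutation sequences issuing from the base triangle $\tau$, so Proposition~\ref{prop:honey} makes all their slacks $\le\varepsilon$ except $\tau$ itself, and forces the $\eta_n^{\pm}$ to decay geometrically along each branch; moreover $\tau$ can occur on at most one of the two branches. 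Hence $G^{+}+G^{-}\le\tfrac1{2a}\,\mathsf{slack}(\tau)+O(\varepsilon/a)$. On the other hand $2L_\tau(v)=\lambda_{\rho_\tau}(\gamma_v)/a\ge 2\min(a,b,c)/a$ (the systole being $2\min(a,b,c)$), whereas $\tfrac1{2a}\,\mathsf{slack}(\tau)=(a_1+a_2-a_3)/a\le\min(a,b,c)/a$ for $a_1\le a_2\le a_3$ the sides of $\tau$. Thus $c_v+c_{-v}\ge\min(a,b,c)/a-O(\varepsilon/a)>0$ once $\varepsilon$ is small, as required.

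\textbf{Main difficulty.} Everything above is routine save the strict inequality $c_v+c_{-v}>0$, i.e.\ that $L_\tau$ has a genuine \emph{corner} at each rational direction rather than merely a one-dimensional face of its epigraph. The point is to bound the accumulated slacks $G^{\pm}$ against $2L_\tau(v)$ with a margin that does not shrink as $N\to\infty$; this hinges entirely on Proposition~\ref{prop:honey} (the $n\ge 1$ tails being geometrically negligible) together with the combinatorial observation that the chain of Farey triangles around $u_0$ meets $\tau$ at most once, so that $G^{+}+G^{-}$ absorbs at most one non-small slack. The remaining work --- keeping $\mathsf{slack}(\tau)$ comfortably below twice the systole, using the freedom in the construction to take the side lengths of $\tau$ large --- is bookkeeping rather than a conceptual obstacle.
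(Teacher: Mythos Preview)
Your convexity and extremality arguments are fine; the gap is in the non-uniqueness step, specifically in the bound $G^{+}+G^{-}\le\tfrac1{2a}\,\mathsf{slack}(\tau)+O(\varepsilon/a)$.

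The quantity $\eta_n^{+}=L_\tau(u_0)+L_\tau(nu_0+v)-L_\tau((n+1)u_0+v)$ equals the slack of the Farey triangle $T_n=\{u_0,\,nu_0+v,\,(n+1)u_0+v\}$ only when $(n+1)u_0+v$ is its \emph{longest} side. By the reflection description of mutation, along any non-backtracking path from $\tau$ the longest side of each triangle is the one added at the last step. If the geodesic in the Markoff tree from $\tau$ first meets the line $\{T_n\}_{n\in\ZZ}$ at $T_m$, then the ``last-added'' side of $T_n$ is $(n+1)u_0+v$ only for $n>m$; for $n<m$ it is $nu_0+v$, and for $n=m$ it is $u_0$ itself. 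In those cases $\eta_n^{+}$ is a \emph{different} defect of the triple, of size $\sim L_\tau(u_0)$ rather than $O(\varepsilon/a)$. Concretely, with $u_0=(1,0)$, $v=(0,1)$ and side lengths $a<b<c$ of $\tau$, one has $T_0=\tau$ and $\eta_0^{+}=(a+c-b)/a$, whereas $\tfrac1{2a}\mathsf{slack}(\tau)=(a+b-c)/a$; the discrepancy $2(c-b)/a$ is of order~$1$. Worse, for a generic $u_0$ (not a vertex of $\tau$) the true corner size $c_v+c_{-v}$ is only of order $K\,e^{-\lambda_{\rho_\tau}(\gamma_{u_0})}/a$ --- exponentially small --- so your asserted lower bound $\min(a,b,c)/a-O(\varepsilon/a)$ cannot hold.

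What is really required is that the slack of the \emph{central} triangle $T_m$ strictly dominates the sum of the slacks of all the others; after the right choice of $v$ your identity becomes $c_v+c_{-v}=s_m-\sum_{k\ne m}s_k$, and one must use the precise decay $s_k\approx K\,e^{-\lambda_{\rho_\tau}(c_k)}$ from Proposition~\ref{prop:honey} (not merely the uniform bound $s_k\le\varepsilon$) to see that the geometric tail is dominated by its first term, which is already much smaller than $s_m$. The paper carries out exactly this comparison, but phrased geometrically: it builds polygonal approximations $\mathscr{L}_\tau^k$ to the unit level curve, estimates the exterior angle $\theta_{p/q}$ at each newly inserted vertex via $\theta_{p/q}\sim_\kappa \tfrac{K}{2a}e^{-\lambda_{\rho_\tau}(\gamma_{p/q})}\,q$, and then checks that the later insertions near $\xi_{p/q}$ erode at most a small fraction of $\theta_{p/q}$. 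There the ``new $=$ longest'' identification is automatic, since each inserted rational is the mediant produced by an outward step from~$\tau$.
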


\begin{proof}
We first restrict to rays with slopes in $[0,1]$.
Let $R:=\{(x,y)\in \RR^2~|~ 0\leq y \leq x\}$ be the union of all these rays.
Define increasing collections $\sigma_k \in [0,1]^{2^k+1}$ by: $\sigma_0 = ( 0/1, 1/1 )$ and, whenever $p/q, \, p'/q'$ are consecutive terms of $\sigma_k$, then $\sigma_{k+1}$ contains these same two terms, plus $(p+p')/(q+q')$ in between.
Every rational number of $[0,1]$ eventually shows up in $\sigma_k$ for $k$ large enough.
By construction, any two consecutive terms of $\sigma_k$ are Farey neighbors.
Define a function $L_\tau^k :R \rightarrow \RR_{\geq 0}$ as follows:
\begin{itemize}
  \item in restriction to any ray whose slope lies in $\sigma_k$, we take $L_\tau^k=L_\tau$;
  \item in the wedge between two consecutive such rays, extend $L_\tau^k$ linearly.
\end{itemize}
The intersection of the graph of $L_\tau^k$ with the plane at height $1$ in $\RR^3$ is a polygonal line $\mathscr{L}_{\tau}^k$ connecting the points 
$$\xi_{p/q} := \frac{2a_{}}{\lambda_{\rho_\tau}(\gamma_{p/q})} (q,p) $$
 for $p/q$ ranging over $\sigma_k$ (here, $0\leq p \leq q$ and $p,q$ are coprime).
Note that $\xi_{0/1}=(1,0)$ and $\xi_{1/1}=(u,u)$ for some $u\sim_\kappa 1$.
For other $p/q\in [0,1]$, the $\RR^2$-norm of $\xi_{p/q}$ is also $\sim_\kappa 1$, due to~\eqref{eq:coarsecone}.

The polygonal line $\mathscr{L}_{\tau}^k$ is obtained from $\mathscr{L}_{\tau}^{k-1}$ by ``adding a small triangle'' on each segment of $\mathscr{L}_{\tau}^{k-1}$.
Specifically, if $\sigma_{k-1} = (\dots, p'/q', p''/q'', \dots)$ and $\sigma_k = (\dots, p'/q', p/q, p''/q'', \dots)$ with $p=p'+p''$ and $q=q'+q''$, then we refer to the triangle $\xi_{p'/q'} \xi_{p/q} \xi_{p''/q''}$ as~$\Delta_{p/q}$. 

We claim that each polygonal line $\mathscr{L}_{\tau}^k \subset \RR^2$ is convex, and very close to the segment $\mathscr{L}_{\tau}^0$ from $\xi_{0/1}$ to $\xi_{1/1}$.
To see this, we must estimate the exterior angle $\theta_{p/q}$ of the triangle $\Delta_{p/q}$ at its tip $\xi_{p/q}$, and show that the $\theta_{p/q}$ are small and decay rapidly enough (in an auxiliary Euclidean metric on $\RR^2$, see Figure~\ref{fig:fishy}).

\begin{figure}[h!] 
\captionsetup{width=0.9\linewidth}
\labellist
\small\hair 2pt
% 		G R I D    F O R    P L A C I N G    L A B E L S
%\grille{0} \grille{2} \grille{4} \grille{6} \grille{8} \grille{10} \grille{12} \grille{14} \grille{16} \grille{18}
%\grille{20} \grille{22} \grille{24} \grille{26} \grille{28} \grille{30} \grille{32} \grille{34} \grille{36} \grille{38} 
%\grille{40} \grille{42} \grille{44} \grille{46} %\grille{48} \grille{50} \grille{52} \grille{54} \grille{56} \grille{58} 
%
\pinlabel {$\mathscr{L}_{\tau}^{k-1}$} [c] at			8	0.5
\pinlabel {$\mathscr{L}_{\tau}^k$} [c] at			7	2.4
\pinlabel {$\xi_{p/q}$} [c] at		12.7	5.3
\pinlabel {$\xi_{p'/q'}$} [c] at		1	1.5
\pinlabel {$\xi_{p''/q''}$} [c] at		24.8	1.5
\pinlabel {$\Delta_{p/q}$} [c] at		11	2.2
\pinlabel {$\pi-\theta_{p/q}$} [c] at	13.4	3.4
%%%%%%%%%%%%%%%%%%%%%%%%%
\endlabellist
\includegraphics[width = \textwidth]{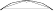}
\caption{Fast-converging sequence of polygonal lines $\mathscr{L}_{\tau}^k$}
\label{fig:fishy}
\end{figure}

We can use Proposition~\ref{prop:honey} to estimate $\theta_{p/q}$.
Namely, the angle $\measuredangle_{p/q}$  in $\RR^3$ between the ray through $\big ( q,p, \frac{1}{2a_{}}\, \lambda_{\rho_\tau}(\gamma_{p/q}) \big)$, belonging to the graph of $L_\tau^k$, and the graph of $L_\tau^{k-1}$ (restricted to the wedge $\smash{\{0 \leq \frac{p'}{q'}x \leq y \leq \frac{p''}{q''} x\}}$), satisfies 
\begin{equation}
\label{eq:smallangle1}
\measuredangle_{p/q} ~\sim_\kappa ~ \left . \frac{K \, \mathrm{e}^{- \lambda_{\rho_\tau} (\gamma_{p'/q'}) - \lambda_{\rho_\tau} (\gamma_{p''/q''})}}{ 2 a_{}} \right / \big \Vert (q,p, {\textstyle \frac{1}{2a_{}}} \lambda_{\rho_\tau}(\gamma_{p/q})) \big \Vert
 \end{equation}
with $K$ defined as in Proposition~\ref{prop:honey}. 
While 
$$K = \mathrm{e}^{a_{} + b_{} + c_{}} = \mathrm{e}^{\left (\lambda_{\rho_\tau}(\gamma_{0/1}) + \lambda_{\rho_\tau}(\gamma_{1/1}) + \lambda_{\rho_\tau} (\gamma_{1/0}) \right )/2}$$ 
\emph{does} depend on $\varepsilon$ (and is in fact quite large), it is small enough that already for the triple $(p'/q', p/q, p''/q'')=(0/1, 1/2, 1/1)$ the above bound is $\leq \varepsilon$, by Proposition~\ref{prop:honey} (recall also that $a_{}$ is large).
The norm in the denominator of~\eqref{eq:smallangle1} is $\sim_\kappa q$.
The angle $\measuredangle_{p/q}$ is also $\sim_\kappa h_{p/q}$, where $h_{p/q}$ is the \emph{height} of the triangle $\Delta_{p/q}$.
The distances of the tip of $\Delta_{p/q}$ to the other two vertices are $\sim_\kappa |p/q - p'/q'|$ and $\sim_\kappa |p/q-p''/q''|$, hence
$$ \theta_{p/q} ~ \sim_\kappa ~ \frac{h_{p/q}}{|\frac{p}{q}-\frac{p'}{q'}|} +  \frac{h_{p/q}}{|\frac{p}{q}-\frac{p''}{q''}|} = h_{p/q} \, (qq'+qq'') = h_{p/q} \, q^2.$$
Using $h_{p/q} \sim_\kappa \measuredangle_{p/q}$ and putting everything together, we find that 
\begin{equation}
\label{eq:smallangle2}
\theta_{p/q} ~\sim_\kappa~ \frac{K}{2a_{}} \,  \mathrm{e}^{-\lambda_{\rho_\tau} (\gamma_{p/q})} q
\end{equation}
for all rational slopes $p/q \in (0,1)$.

As we continue through the family of polygonal lines $\{ \mathscr{L}_{\tau}^{k+s} \}_{s\geq 1}$, the exterior angle of $\mathscr{L}_{\tau}^{k+s}$ at the tip $\xi_{p/q}$ of the triangle $\Delta_{p/q}$ decreases by successive amounts $\leq \theta_{p_s/q_s}$, where $(q_s,p_s)=(q',p')+s(q,p)$ or $(q'',p'')+s(q,p)$, due to insertion of triangles~$\Delta_{p_s/q_s}$. 
By~\eqref{eq:smallangle2}, these sequences $(\theta_{p_s/q_s})$ decay quickly with $s$; in fact their sums are dominated by twice the $s=1$ term, which is already much smaller than $\theta_{p/q}$.
Thus, $\xi_{p/q}$ remains a salient corner in every polygonal line $\mathscr{L}_{\tau}^{k+s}$, and in the limit $\mathscr{L}_{\tau}^{\infty}$ which is a convex arc.

\smallskip

Similar arguments apply for slopes in $(-\infty,0)$ and in $(1,+\infty)$.
This shows that the functions $L_\tau^k$ limit (from above) to a convex function~$L_\tau$. 
In fact, we have also shown that the graph of the restriction of $L_\tau$ to any rational ray is an \emph{angular} extremal ray of the convex cone $\{z\geq L_\tau(x,y)\} \subset \RR^3$, in the sense that it has nonunique supporting plane.
\end{proof}

%%%%%%%%%%%%%%%%%%%%%%%%%
\subsection{Three representations} \label{subsec:multi-Fuchsian-infinite-sided}

We move on to multi-Fuchsian representations of the form $\boldsymbol{\rho}=(\rho_1, \rho_2, \rho_3) : \pi_1(S)=\mathbb{F}_2 \rightarrow (\mathrm{PSL}_2 \RR )^3$; see Figure~\ref{fig:pointy}.
We continue with $\kappa, \varepsilon, \delta$ as in the previous section, but choose $\boldsymbol{\rho}$ such that
\begin{align*} \big ( 
\lambda_{\rho_1}(\gamma_{0/1}),~ 
\lambda_{\rho_1}(\gamma_{1/1}),~ 
\lambda_{\rho_1}(\gamma_{1/0}) \big) & = 2(a_{}, b_{}, b_{}) \\ \big ( 
 \lambda_{\rho_2}(\gamma_{0/1}),~ 
 \lambda_{\rho_2}(\gamma_{1/1}),~ 
 \lambda_{\rho_2}(\gamma_{1/0}) \big) & = 2(b_{}, a_{}, b_{}) \\\big ( 
 \lambda_{\rho_3}(\gamma_{0/1}),~ 
 \lambda_{\rho_3}(\gamma_{1/1}) ,~ 
 \lambda_{\rho_3}(\gamma_{1/0}) \big) & = 2(b_{}, b_{}, a_{})
\end{align*}
where $0 < a_{} < b_{}$ are large numbers.
Note that a large isosceles triangle $\tau$ in $\mathbb{H}^2$, with side lengths $a_{}, b_{}, b_{}$, \emph{always} has all three inner angles~$\leq \delta$. 
If $b_{}/a_{}$ becomes large (which requires a large $\kappa$), then the points $[\boldsymbol{\lambda_\rho} (\gamma_{p/q})]$ for $p/q\in\{0/1,1/1,1/0\}$ fall close to the midpoints of the walls of the projectivized Weyl chamber $\PP \mathfrak{a}^+$ of $(\mathrm{PSL}_2 \RR )^3$. 
However, we may continue to use $\kappa=2$.

\begin{figure}[h!] %    S I G N S
\captionsetup{width=0.9\linewidth}
\labellist
\small\hair 2pt
% 		G R I D    F O R    P L A C I N G    L A B E L S
%\grille{0} \grille{2} \grille{4} \grille{6} \grille{8} \grille{10} \grille{12} \grille{14} \grille{16} \grille{18}
%\grille{20} \grille{22} \grille{24} \grille{26} \grille{28} \grille{30} \grille{32} \grille{34} \grille{36} \grille{38} 
%\grille{40} \grille{42} \grille{44} \grille{46} %\grille{48} \grille{50} \grille{52} \grille{54} \grille{56} \grille{58} 
%
\pinlabel {$0$} [c] at				5.7	5.45
\pinlabel {$1$} [c] at				6.4	6.5
\pinlabel {$\infty$} [c] at			4.9	6.5
\pinlabel {${}_{1/2}$} [c] at			6.2	4
\pinlabel {${}_2$} [c] at			7.15	7.9
\pinlabel {${}_{-1}$} [c] at			3.3	6.8
\pinlabel {$\mathbb{H}^2$} [c] at	6	1.3
\pinlabel {$\rho_1$} [c] at		19.6		2.3
\pinlabel {$\rho_2$} [c] at		22.75	7.85
\pinlabel {$\rho_3$} [c] at		16.2		8
\pinlabel {$\tau$} [c] at		23.2		1.35
\pinlabel {$\tau'$} [c] at		21.9		11.6
\pinlabel {$\tau''$} [c] at		13.7		5.6
\pinlabel {$0$} [c] at			19.93	4
\pinlabel {$1$} [c] at			21.7		7
\pinlabel {$\infty$} [c] at		17.4		6.85
\pinlabel {${}_{1/2}$} [c] at		21.2		5.1
\pinlabel {${}_2$} [c] at		19.55	7.8
\pinlabel {${}_{-1}$} [c] at		18		5.2
\pinlabel {$\color{red}{}_{a_{}}$} [c] at			15.75	1.4
\pinlabel {$\color{blue}{}_{b_{}}$} [c] at			16.55	2.2
\pinlabel {$\color{vert}{}_{b_{}}$} [c] at			16.55	0.5
\pinlabel {$\PP \mathfrak{a}^+$} [c] at			16.6	4.3
\pinlabel {$\PP \Lambda_{\boldsymbol{\rho}}$} [c] at	19.1	6.4
%%%%%%%%%%%%%%%%%%%%%%%%%
\endlabellist
\includegraphics[width = \textwidth]{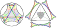}
\caption{\small {\bf Left}: a portion of the universal cover of a hyperbolic one-holed torus. 
Two right-angled hexagons (yellow and gray) form a fundamental domain of the convex core, and we draw lifts of simple loops of slopes $0,1,\infty$ (thick) and $-1,\frac{1}{2},2$ (dotted).
\newline {\bf Right}: the limit cone $\PP\Lambda_{\boldsymbol{\rho}}$ in the projectivized Weyl chamber $\PP \mathfrak{a}^+$, for a multi-Fuchsian representation $\bsrho = (\rho_1, \rho_2, \rho_3)$ as in Section~\ref{subsec:multi-Fuchsian-infinite-sided}, with pictures of corresponding fundamental domains. 
We respect the color codes. 
Hyperbolic lengths are (scaled) barycentric coordinates in the triangle $\PP \mathfrak{a}^+$: \eg the red dot marked~$0$ is closer to the bottom edge of $\PP \mathfrak{a}^+$ (corresponding to $\rho_1$) because the red curve, of slope $0$, is shorter in $\rho_1$ than in $\rho_2, \rho_3$.}
\label{fig:pointy}
\end{figure}

By the results of the previous section, to a first approximation each component $\lambda_{\rho_i}$ extends roughly linearly to slopes in $[0,1]$ (and similarly in $[-\infty, 0]$ and $[1,+\infty]$): 
$$ \boldsymbol{\lambda_\rho} (\gamma_{p/q}) = \boldsymbol{\lambda_\rho} (\gamma_{p'/q'}) + \boldsymbol{\lambda_\rho} (\gamma_{p''/q''}) +o(1)$$
for a pair of Farey neighbors $(p'/q', p''/q'')$, where $(q,p)=(q'+q'',p'+p'')$.
This time however, the defect $o(1)$ has (small, negative) components in each of the three dimensions. 
However, Proposition~\ref{prop:honey} says that one of these components (roughly $2K \mathrm{e}^{-\lambda_{\rho_i}(\gamma_{p/q})}$) vastly outweighs the other two, namely the one associated to the smallest component of $\boldsymbol{\lambda_\rho}(\gamma_{p/q})$ (\ie the nearest side to $[\boldsymbol{\lambda_\rho}(\gamma_{p/q})]$ in $\partial \PP \mathfrak{a}^+$). 
Therefore, inside each sector of the Weyl chamber, we can analyze $\boldsymbol{\lambda_\rho}$ in the same way as in the proof of Proposition~\ref{prop:toycone}, approaching from the inside the convex hull of its projectivized image by a sequence of convex polygonal lines $(\mathscr{L}^k)_{k\in \NN}$. 
The subdivision $[-\infty,-1] \cup [-1,0] \cup [0,1/2] \cup [1/2,1] \cup [1,2] \cup [2,+\infty]$ of $\PP^1\QQ$ is well suited to this analysis: by symmetry we may restrict to just one of these six intervals, and its associated Jordan projections stay in a single sector; see Figure~\ref{fig:pointy}.
Projectivizing, we obtain the following.

\begin{proposition} \label{prop:homoclinic}
The map $j_{\boldsymbol{\rho}}:\PP^1 \QQ \rightarrow \PP \mathfrak{a}^+$, taking $p/q$ to $[\boldsymbol{\lambda_\rho}(\gamma_{p/q})]$, extends continuously to $\PP^1 \RR$. 
Its image is the boundary of a strictly convex region $\PP \Lambda_{\bsrho}^{ss}$, of which every $[\boldsymbol{\lambda_\rho}(\gamma_{p/q})]$ is an angular point. 
When the lengths $a_{}$ and $b_{}$ are large, then $\PP \Lambda_{\bsrho}^{ss}$ is close to the triangle with vertices $[a_{}: b_{}: b_{}]$, $[b_{}: a_{}: b_{}]$, $[b_{}: b_{}: a_{}]$, with nearly-as-pointy corners. \qed
\end{proposition}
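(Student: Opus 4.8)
The plan is to run, on each of the six intervals $[-\infty,-1],\,[-1,0],\,[0,1/2],\,[1/2,1],\,[1,2],\,[2,+\infty]$ into which $\PP^1\QQ$ is cut, the polygonal-approximation scheme of the proof of Proposition~\ref{prop:toycone}, now fed coordinatewise by Proposition~\ref{prop:honey}. First I would record the symmetry legitimizing this reduction: the stabilizer of $\{0/1,1/1,1/0\}$ in $\PGL_2\ZZ$ (acting on slopes as the extended mapping class group of~$S$) is a copy of $S_3$; each of its elements permutes $\PP^1\QQ$, permutes the three components $\rho_1,\rho_2,\rho_3$ (because the three base triangles have side lengths $(a,b,b)$, $(b,a,b)$, $(b,b,a)$), and permutes the coordinates of $\mathfrak a^+$, all compatibly with $j_{\boldsymbol\rho}$; these six intervals form a single $S_3$-orbit, so it suffices to treat, say, $[0,1/2]$ and to check that the arcs over adjacent intervals match at $j_{\boldsymbol\rho}(0/1)$ and $j_{\boldsymbol\rho}(1/2)$. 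On the open interval $(0,1/2)$ one reads off from the approximate linearity $\lambda_{\rho_1}(\gamma_{p/q})=2\big((q-p)a+pb\big)+O(\varepsilon)$ and its two companions that $\lambda_{\rho_1}(\gamma_{p/q})<\lambda_{\rho_2}(\gamma_{p/q})<\lambda_{\rho_3}(\gamma_{p/q})$, the two gaps being $2(q-2p)(b-a)+O(\varepsilon)$ and $2p(b-a)+O(\varepsilon)$ --- positive, and growing linearly in the denominator away from the endpoint~$1/2$ --- so $[\boldsymbol{\lambda_\rho}(\gamma_{p/q})]$ stays in the barycentric sector of $\PP\mathfrak a^+$ touching the edge $\{x_1=0\}$, consistently with Figure~\ref{fig:pointy}.

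Next I would set up, exactly as in the proof of Proposition~\ref{prop:toycone}, the Farey refinements $\sigma_k$ of $[0,1/2]$, the vertices $\xi_{p/q}:=[\boldsymbol{\lambda_\rho}(\gamma_{p/q})]$, and the polygonal lines $\mathscr L^k\subset\PP\mathfrak a^+$ joining consecutive ones. The engine is that, for a Farey triple $(p'/q',p/q,p''/q'')$ with $p/q$ the mediant (so $\gamma_{p/q}$ is the longest of the three curves in \emph{every} structure), Proposition~\ref{prop:honey} applied in each structure gives
\[
\boldsymbol{\lambda_\rho}(\gamma_{p'/q'})+\boldsymbol{\lambda_\rho}(\gamma_{p''/q''})-\boldsymbol{\lambda_\rho}(\gamma_{p/q})=2K\,\big(\mathrm{e}^{-\lambda_{\rho_1}(\gamma_{p/q})},\ \mathrm{e}^{-\lambda_{\rho_2}(\gamma_{p/q})},\ \mathrm{e}^{-\lambda_{\rho_3}(\gamma_{p/q})}\big)\,(1+O(\varepsilon)),
\]
with $K=\mathrm{e}^{a+2b}$ the \emph{same} constant in all three coordinates, since the three base triangles are isometric. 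On $(0,1/2)$ the first coordinate of this defect vector dominates (exponentially in the denominator once the slope is bounded away from $1/2$), so projectively $\xi_{p/q}$ is displaced off the chord $[\xi_{p'/q'},\xi_{p''/q''}]$ essentially in the direction of decreasing first coordinate; a short linear-algebra computation shows that the component of this displacement transverse to the edge $[\xi_{0/1},\xi_{1/1}]$ of the limiting triangle is proportional to $\mathrm{e}^{-\lambda_{\rho_1}(\gamma_{p/q})}+\mathrm{e}^{-\lambda_{\rho_2}(\gamma_{p/q})}-2\,\mathrm{e}^{-\lambda_{\rho_3}(\gamma_{p/q})}$, which is $>0$ precisely because $\lambda_{\rho_3}(\gamma_{p/q})$ is the largest of the three (and $a<b$), so the displacement is \emph{outward}, away from $[1:1:1]$, throughout $(0,1/2)$ --- including near the slope $1/2$, where the first two coordinates of the defect become comparable. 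This is the three-dimensional analogue of \eqref{eq:smallangle1}--\eqref{eq:smallangle2}: the exterior angle $\theta_{p/q}$ of $\mathscr L^k$ at $\xi_{p/q}$ is $\sim_\kappa \tfrac{K}{a}\,\mathrm{e}^{-\lambda_{\rho_1}(\gamma_{p/q})}\,q$, is already $\le\varepsilon$ for the first mediant (by the last sentence of Proposition~\ref{prop:honey}), and decays geometrically --- summably along each branch, with tail dominated by a constant times its first term. Hence each $\mathscr L^k$ is convex, each refinement only shaves off a tiny triangle $\Delta_{p/q}$, the salient corner at each $\xi_{p/q}$ survives all later refinements, and the $\mathscr L^k$ converge uniformly to a convex arc on which every $\xi_{p/q}$ is an angular point.

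It then remains to assemble the global statement. Gluing the six arcs at the (well-defined) images of $0/1,1/1,1/0,1/2,2,-1$ yields a convex closed curve; since consecutive Farey images satisfy $|\xi_{p'/q'}-\xi_{p''/q''}|\sim_\kappa|p'/q'-p''/q''|$ and the sub-arc joining them has comparable diameter (the inserted triangles having summable heights), the ``gaps'' shrink to points, so $j_{\boldsymbol\rho}$ is uniformly continuous on $\PP^1\QQ$ and extends continuously to $\PP^1\RR$; its image is a Jordan curve, and we define $\PP\Lambda^{ss}_{\boldsymbol\rho}$ to be the closed region it bounds --- convex, and strictly convex because the angular points $\xi_{p/q}$ are dense in its boundary, so it contains no segment. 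Finally, by the last sentence of Proposition~\ref{prop:honey} the total slack incurred in expressing $\boldsymbol{\lambda_\rho}(\gamma_{p/q})$ from the three base curves is $O(\varepsilon)$, while $\|\boldsymbol{\lambda_\rho}(\gamma_{p/q})\|\sim_\kappa q\,a$ by the same coarse estimate as \eqref{eq:coarsecone}; hence each $\xi_{p/q}$ lies within $O(\varepsilon/a)$ of the appropriate edge of the triangle $T$ with vertices $[a:b:b],[b:a:b],[b:b:a]$. Letting $a,b\to+\infty$ --- which drives the inner angles of the isosceles triangle with sides $a,b,b$ below the $\delta(\varepsilon)$ of Proposition~\ref{prop:honey}, so $\varepsilon$ may be taken arbitrarily small --- $\PP\Lambda^{ss}_{\boldsymbol\rho}$ becomes Hausdorff-close to $T$, whose centroid is $[1:1:1]$ (so $[1:1:1]$ lies in its interior for $a,b$ large); the three corners $\xi_{0/1},\xi_{1/1},\xi_{1/0}$ of $\PP\Lambda^{ss}_{\boldsymbol\rho}$ then have corner angles converging to those of $T$, while every other corner $\xi_{p/q}$ has corner angle $\le\varepsilon$.

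The step I expect to be the main obstacle is making the ``outward displacement'' analysis airtight in this genuinely three-dimensional situation: in the setting of Proposition~\ref{prop:toycone} the defect is a single scalar, so convexity of the $\mathscr L^k$ is nearly automatic, whereas here one must track the full $\RR^3$-defect vector and verify that its dominant-coordinate structure still produces a strictly outward projective displacement (the inequality $\mathrm{e}^{-\lambda_{\rho_1}(\gamma_{p/q})}+\mathrm{e}^{-\lambda_{\rho_2}(\gamma_{p/q})}>2\,\mathrm{e}^{-\lambda_{\rho_3}(\gamma_{p/q})}$) uniformly over the whole Farey tree, in particular near the three transition slopes $1/2,2,-1$ where two of the three coordinates of the defect are comparable. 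A secondary but pervasive point is ensuring the $\sim_\kappa$ and $O(\varepsilon)$ bookkeeping is uniform enough that the geometric decay of the $\theta_{p/q}$ along each branch never erases a corner; this is handled exactly as in the proof of Proposition~\ref{prop:toycone}, once the coordinatewise form of Proposition~\ref{prop:honey} displayed above is in hand.
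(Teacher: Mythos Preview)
Your proposal is correct and follows essentially the same route as the paper: subdivision of $\PP^1\QQ$ into the six Farey intervals $[-\infty,-1]\cup[-1,0]\cup[0,\tfrac12]\cup[\tfrac12,1]\cup[1,2]\cup[2,+\infty]$, reduction by the $S_3$-symmetry, coordinatewise application of Proposition~\ref{prop:honey} (with the common constant $K=\mathrm{e}^{a+2b}$ since the three base triangles are isometric), and the polygonal-approximation scheme of Proposition~\ref{prop:toycone}. The paper in fact treats the proposition as a near-immediate corollary and marks it \qed after a one-paragraph sketch; you supply considerably more detail, in particular the explicit verification that on $(0,\tfrac12)$ one has $\lambda_{\rho_1}<\lambda_{\rho_2}<\lambda_{\rho_3}$ and the analysis of the projective displacement near the transition slopes $\tfrac12,2,-1$, where two coordinates of the defect become comparable --- a point the paper glosses over with the phrase ``vastly outweighs''. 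Your identification of this as the main obstacle is apt; the inequality you isolate (that the defect vector, normalized, lies farther from $[1{:}1{:}1]$ than the chord point, because $\lambda_{\rho_3}$ is the largest coordinate) is exactly what is needed, though the precise linear form you quote for the transverse component is a mild simplification of the actual first-order formula.
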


The convex set $\PP \Lambda_{\bsrho}^{ss}$ in the above statement coincides with the projectivized simple hull $\PP\Lambda_{\bsrho}^s$ (Definition~\ref{def:simple-hull}), because the only simple curve in the one-holed torus $S$ other than the $[\gamma_{p/q}]$ is the peripheral loop, whose Jordan projection lies at the center of the Weyl chamber.
Since every point of $\partial \PP \Lambda_{\bsrho}^{ss}$ is the (projectivized) Jordan projection of a measured lamination, and has a supporting direction that is azimuthal, Proposition~\ref{prop:homoclinic} and Theorem~\ref{thm:azimut-limit-cone} together imply 
$\PP \Lambda_{\bsrho}^{ss} =\PP \Lambda_{\bsrho}^s =\PP \Lambda_{\boldsymbol{\rho}}$, proving Theorem~\ref{thm:fishy}.

%%%%%%%%%%%%%%%%%%%%%%%%%%%%%%%%%%%%%%%%%%%%%%%%%%%
\section{Discontinuous behavior of the limit cone} \label{sec:lim-cone-discont}

Given a finite-type surface $S$ of negative Euler characteristic and $d\geq 2$, it is natural to ask how the limit cone $\Lambda_{\bsrho}$ varies as a function of $\bsrho \in \Hom(\pi_1(S),(\PSL_2\RR)^d)$.
The following observation is straightforward.

\begin{remark} \label{rem:lim-cone-lower-semicont}
The limit cone $\Lambda_{\bsrho}$ always varies lower semicontinuously: if a sequence $(\bsrho^{(n)})_{n\in\NN}$ converges to a representation $\bsrho$ in $\Hom(\pi_1(S),(\PSL_2\RR)^d)$, then any point $x\in\Lambda_{\bsrho}$ is a limit of points $x_n\in\Lambda_{\bsrho^{(n)}}$.
Indeed, by definition of~$\Lambda_{\bsrho}$ the projective class $[x]$ can be approached by a sequence $([\bslambda_{\bsrho}(\gamma_k)])_{k\in \NN}$ where $\gamma_k\in\pi_1(S)$ for all $k\in\NN$, and given $k\in\NN$ we have $\smash{\bslambda_{\bsrho^{(n)}}(\gamma_k) \underset{n\to +\infty}{\longrightarrow} \bslambda_{\bsrho}(\gamma_k)}$ by continuity of~$\bslambda$; we conclude by a diagonal extraction argument.
\end{remark}

In the multi-Fuchsian case with no cusps, upper semicontinuity also holds.

\begin{proposition}
Let $\bsrho = (\rho_1,\dots,\rho_d) : \pi_1(S)\to(\PSL_2\RR)^d)$ be a multi-Fuchsian representation with no cusps, or more generally a representation such that each factor~$\rho_i$ is injective with convex cocompact image.
Then for any sequence $(\bsrho^{(n)})_{n\in\NN}$ of representations converging to~$\bsrho$ in $\Hom(\pi_1(S),(\PSL_2\RR)^d)$, the limit cones $\Lambda_{\bsrho^{(n)}}$ converge to $\Lambda_{\bsrho}$: the Hausdorff distance between the projectivizations $\PP \Lambda_{\bsrho^{(n)}}$ and $\PP \Lambda_{\bsrho}$ goes to zero.
\end{proposition}

\begin{proof}
If $\rho : \pi_1(S)\to\PSL_2\RR$ is an injective representation with convex cocompact image, then for any $\varepsilon>0$ there is a neighborhood $\mathcal{V}_{\rho,\varepsilon}$ of $\rho$ in $\Hom(\pi_1(S),\PSL_2\RR)$ such that 
$$|\lambda_{\rho'}(\gamma) - \lambda_\rho(\gamma)| \leq \varepsilon \, \lambda_\rho(\gamma)$$
for all $\rho'\in\mathcal{V}_{\rho,\varepsilon}$ and all $\gamma\in\pi_1(S)$.
Indeed, it suffices to take $\mathcal{V}_{\rho,\varepsilon}$ small enough that any two of its elements are the holonomies of two hyperbolic metrics on the convex core of $S$ that are $(1+\varepsilon)$-bi-Lipschitz to one another. 

For any $\varepsilon>0$, let $\mathcal{U}_{\varepsilon}:= \mathcal{V}_{\rho_1,\varepsilon} \times \dots \times \mathcal{V}_{\rho_d,\varepsilon} \subset \Hom(\pi_1(S),(\PSL_2\RR)^d)$. 
Then for any $\bsrho' \in \mathcal{U}_{\varepsilon}$,
$$\Vert \bslambda_{\bsrho'}(\gamma) - \bslambda_{\bsrho}(\gamma) \Vert \leq \varepsilon\sqrt{d} \, \Vert\bslambda_{\bsrho}(\gamma)\Vert$$
holds at all $\gamma\in\pi_1(S)$, hence $\PP \Lambda_{\bsrho'}$ is $\varepsilon$-close to $\PP \Lambda_{\bsrho}$ for the Hausdorff distance.
\end{proof}

Interestingly, upper semicontinuity of the limit cone $\Lambda_{\bsrho}$ can fail in the presence of cusps.
We now briefly discuss this phenomenon.

%%%%%%%%%%%%%%%%%%%%%%%%%
\subsection{Case $d=2$} \label{subsec:lim-cone-discont-d=2}

Here is an example where upper semicontinuity fails at a multi-Fuchsian representation $\bsrho$ with cusps in both factors.

\begin{lemma} \label{lem:lim-cone-discont-d=2}
Let $S$ be a three-holed sphere with fundamental group $\pi_1(S) = \langle a,b,c \,|\, cba=\nolinebreak 1\rangle$ where $a,b,c$ correspond to the three boundary curves.
For $i=1$ (\resp $i=2$), let $\rho_i : \pi_1(S)\to\PSL_2\RR$ be the representation given by
$$\rho_i(a) = \begin{pmatrix} 1 & 6\\ 0 & 1\end{pmatrix} \quad\text{\bigg(\resp} \begin{pmatrix} 1 & 4\\ 0 & 1\end{pmatrix} \text{\bigg)} \quad\mathrm{and}\quad \rho_i(b) = \begin{pmatrix} 1 & 0\\ -6 & 1\end{pmatrix}  \quad\text{\bigg(\resp} \begin{pmatrix} 1 & 0\\ -4 & 1\end{pmatrix} \text{\bigg)} ;$$
it is the holonomy representation of a complete hyperbolic structure on~$S$ with two cusps.
Then $\bsrho := (\rho_1,\rho_2) \in \Hom(\pi_1(S),(\PSL_2 \RR)^2)$ has Zariski-dense image in $(\PSL_2 \RR)^2$ and
\begin{enumerate}
  \item\label{item:discont-1} the limit cone $\Lambda_{\bsrho}$ is the $\RR_{\geq 0}$-span of $(t_1,t_2) := (4\,\arccosh(3),4\,\arccosh(2))$ and $(1,1)$;
  \item\label{item:discont-2} there is a sequence $(\bsrho^{(n)})_{n\in\NN}$ of multi-Fuchsian representations, converging to~$\bsrho$, such that the limit cones $\Lambda_{\bsrho^{(n)}}$ converge to the $\RR_{\geq 0}$-span of $(t_1,t_2)$ and $(0,1)$, which strictly contains $\Lambda_{\bsrho}$.
\end{enumerate}
\end{lemma}

\begin{proof}
Zariski-density follows from Lemma~\ref{lem:Goursat}.

\eqref{item:discont-1} We first observe that the simple hull $\Lambda^s_{\bsrho}$ (Definition~\ref{def:simple-hull}) is the ray $\RR_{\geq 0} (t_1, t_2)$, since among the three simple curves $a,b,c$ on~$S$, only $c$ has nonzero Jordan projection $\bslambda_{\bsrho}(c) = (t_1, t_2)$.
The full limit cone $\Lambda_{\bsrho}$ also contains the ray $\RR_{\geq 0} (1,1) = \lim_{k\to +\infty}\RR_{\geq 0}\bslambda_{\rho}(a^kb)$, hence the $\RR_{\geq 0}$-span of $(t_1, t_2)$ and $(1,1)$ by Lemma~\ref{lem:conv-lim-cone}. 
To show that this is the full limit cone, we use \cite[Th.\,1.3]{gk17}, which implies that Theorem~\ref{thm:azimut-limit-cone} holds in the case $d = 2$ for general multi-Fuchsian representations, possibly with cusps.
In particular, $\RR_{\geq 0}(t_1, t_2)$ lies in the boundary of the simple hull $\Lambda_{\bsrho}^s$ and has an azimuthal supporting hyperplane, hence it lies in the boundary of $\Lambda_{\bsrho}$. 
On the other side, if $\R_{\geq 0} (1,1)$ were contained in the interior of $\Lambda_{\bsrho}$, then both sides of the cone would have azimuthal supporting hyperplanes, hence we would have $\Lambda_{\bsrho} = \Lambda_{\bsrho}^{s}$, which is not the case.

\eqref{item:discont-2} For $n\geq 1$, let $\bsrho^{(n)} := (\rho_1,\rho_2^{(n)}) \in \Hom(\pi_1(S),(\PSL_2\RR)^2)$ where
$$\rho_2^{(n)} (a) = \begin{pmatrix} \cosh \frac{1}{n} & 4n \sinh \frac{1}{n} \\ \phantom{-}\frac{1}{4n} \sinh \frac{1}{n} & \cosh \frac{1}{n}\end{pmatrix} \quad\mathrm{and}\quad \rho_2^{(n)} (b) = \begin{pmatrix} \cosh \frac{1}{n} & \frac{-1}{4n} \sinh \frac{1}{n} \\ -4n \sinh \frac{1}{n} & \cosh \frac{1}{n}\end{pmatrix} .$$
Then $(\bsrho^{(n)})_{n\geq 1}$ converges to~$\bsrho$.
We have $\lambda_{\rho_2^{(n)}}(a) = \lambda_{\rho_2^{(n)}}(b) = 2/n \to 0$ and $\lambda_{\rho_2^{(n)}}(c) =: t_2^{(n)} \to t_2$.
The simple hull $\Lambda_{\bsrho^{(n)}}$ is the $\RR_{\geq 0}$-span of $(t_1, t_2^{(n)})$ and $(0,1)$, which, for sufficiently large~$n$, has azimuthal supporting hyperplanes on both sides. Hence, using~\cite[Th.\,1.3]{gk17} as above, we have $\Lambda_{\bsrho^{(n)}} = \Lambda_{\bsrho^{(n)}}^s$, which converges to the $\RR_{\geq 0}$-span of $(t_1, t_2)$ and $(0,1)$.
\end{proof}

In \cite[\S\,10.6]{gk17} we previously considered a variant of this example where $\rho_2^{(n)}$ converges instead to the constant representation $1_{\PSL_2\RR}$ (hence $\Lambda_{\bsrho} = \RR_{\geq 0} (1,0)$), while $\Lambda_{\bsrho^{(n)}}$ can be made to converge to the $\RR_{\geq 0}$-span of $(1,0)$ and $(1,\beta)$ for any $0\leq\beta\leq 1$. 
For $\beta<1$, this is achieved by taking small order-$n$ rotations
\begin{equation} \label{eq:deteriorate} 
\rho_2^{(n)}(a):= \begin{pmatrix} \cos \frac{\pi}{n} & n^\beta \sin \frac{\pi}{n} \\ 
\frac{-1}{n^\beta} \sin \frac{\pi}{n} & \cos \frac{\pi}{n}\end{pmatrix}
\quad \text{ and } \quad
\rho_2^{(n)}(b):= \begin{pmatrix} \cos \frac{\pi}{n} & \frac{1}{n^\beta} \sin \frac{\pi}{n} \\ 
-n^\beta \sin \frac{\pi}{n} & \cos \frac{\pi}{n}\end{pmatrix}~:
\end{equation}
the extremal slope $\beta+o(1)$ is then approached by elements $\bsrho^{(n)}(a^{\lfloor n/2 \rfloor} b^{\lfloor n/2 \rfloor})$. 
The case $\beta=1$ follows by extraction, or by taking $\smash{\rho_2^{(n)}(a), \rho_2^{(n)}(b)}$ equal to small unipotent elements.

As a further variant in the same vein, for any $0\leq \beta<1$, the nondiscrete representations
$$\rho_2^{(n)}(a):= \begin{pmatrix} \cosh \frac{1}{n} & n^\beta \sinh \frac{1}{n} \\ 
\frac{1}{n^\beta} \sinh \frac{1}{n} & \cosh \frac{1}{n}\end{pmatrix}
\quad \text{ and } \quad
\rho_2^{(n)}(b):= \begin{pmatrix} \cosh \frac{1}{n} & \frac{-1}{n^\beta} \sinh \frac{1}{n} \\ 
-n^\beta \sinh \frac{1}{n} & \cosh \frac{1}{n}\end{pmatrix} ,$$
yield again $\rho_2^{(n)} \to 1_{\PSL_2\RR}$ (hence $\Lambda_{\bsrho} = \RR_{\geq 0} (1,0)$), while $\Lambda_{\bsrho^{(n)}} = \RR_{\geq 0}^2 = \aaa^+$ for all~$n$.

%%%%%%%%%%%%%%%%%%%%%%%%%
\subsection{Case $d=3$} \label{subsec:lim-cone-discont-d=3}

We now give an example of discontinuous behavior where the limiting representation is multi-Fuchsian with some convex cocompact factors.

\begin{lemma} \label{lem:lim-cone-discont-d=3}
Let $S$ be a three-holed sphere with fundamental group $\pi_1(S) = \langle a,b,c \,|\, cba=\nolinebreak 1\rangle$ where $a,b,c$ correspond to the three boundary curves.
Let $\bsrho: \pi_1(S) \to (\PSL_2 \RR)^3$ be a multi-Fuchsian representation such that $(\bslambda_{\bsrho}(a), \bslambda_{\bsrho}(b), \bslambda_{\bsrho}(c)) = (v_a, v_b, v_c)$ where $v_a := (t_a,1,1)$ and $v_b := (1,t_b,1)$ and $v_c := (1,1,0)$ for some $t_a,t_b \in (0,1)$ (see Example~\ref{ex:pants}).
Then $\bsrho$ has Zariski-dense image in $(\PSL_2\RR)^3$ and
\begin{enumerate}
  \item\label{item:discont-1} the limit cone $\Lambda_{\bsrho}$ is the $\RR_{\geq 0}$-span of $v_a$, $v_b$, and~$v_c$ --- a cone on a triangle;
  \item\label{item:discont-2} there is a sequence $(\bsrho^{(n)})_{n\in\NN}$ of representations in $\Hom(\pi_1(S),(\PSL_2\RR)^3)$, converging to~$\bsrho$, such that the limit cones $\Lambda_{\bsrho^{(n)}}$ have a limit which contains the $\RR_{\geq 0}$-span of $v_a$, $v_b$, $(t_a,1,0)$, and $(1,t_b,0)$, hence which strictly contains $\Lambda_{\bsrho}$.
\end{enumerate}
\end{lemma}

\begin{proof}
Zariski-density follows from Lemma~\ref{lem:Goursat}.

\eqref{item:discont-1} The simple hull $\Lambda_{\bsrho}^s$ is $\RR_{\geq 0}\text{-span}(v_a,v_b,v_c) := \RR_{\geq 0}\,v_a + \RR_{\geq 0}\,v_b + \RR_{\geq 0}\,v_c$; it is contained in $\Lambda_{\bsrho}$ by Lemma~\ref{lem:conv-lim-cone}.
Let us check that $\Lambda_{\bsrho} \subset \RR_{\geq 0}\text{-span}(v_a,v_b,v_c)$.
We can write $\bsrho$ as a limit of multi-Fuchsian representations $\bssigma^{(n)}$ with no cusps such that $(v_a^{(n)}, v_b^{(n)}, v_c^{(n)})_{n\in\NN} := (\bslambda_{\bssigma^{(n)}}(a), \bslambda_{\bssigma^{(n)}}(b), \bslambda_{\bssigma^{(n)}}(c))_{n\in\NN}$ converges to $(v_a, v_b, v_c)$.
The projective triangle\linebreak $\PP(\RR_{\geq 0}\text{-span}(v_a,v_b,v_c))$ 
is azimuthal (Definition~\ref{def:azim-polygon}), hence so is $\PP(\RR_{\geq 0}\text{-span}(v_a^{(n)}, v_b^{(n)}, v_c^{(n)}))$ for all large enough~$n$.
By Theorem~\ref{thm:azimut-limit-cone} (see Example~\ref{ex:pants}), we have $\Lambda_{\bssigma^{(n)}} =\linebreak \RR_{\geq 0}\text{-span}(v_a^{(n)}, v_b^{(n)}, v_c^{(n)})$ for all large~$n$, hence $\Lambda_{\bsrho} \subset \RR_{\geq 0}\text{-span}(v_a, v_b, v_c)$ by Remark~\ref{rem:lim-cone-lower-semicont}.

\eqref{item:discont-2} Write $\bsrho = (\rho_1,\rho_2,\rho_3) : \pi_1(S)\to (\PSL_2\RR)^3$.
Let $(g_n)_{n\in\NN}$ be a sequence of elliptic elements of $\PSL_2\RR$ such that $g_n$ has order~$n$ and $g_n\to\rho_3(c)$.
(This exists: up to conjugation $\rho_3(c)$ is triangular unipotent and we can use matrices such as~\eqref{eq:deteriorate} with $\beta=1$.)
Let $\bsrho^{(n)} = (\rho_1,\rho_2,\rho_3^{(n)})$ where $\rho_3^{(n)}(a) = \rho_3(a)$ and $\rho_3^{(n)}(c) = g_n$.
Then $\bsrho^{(n)} \to \bsrho$ as $n\to +\infty$.
We have $\bslambda_{\bsrho^{(n)}}(a) = (t_a,1,1)=v_a$ and $\bslambda_{\bsrho^{(n)}}(b) =(1,t_b,1+o(1)) \to v_b$.
We claim that  $(t_a,1,0) \in \Lambda_{\bsrho^{(n)}}$ for all $n\geq 1$.
Indeed, since $\rho_3^{(n)}(c^n)=\mathrm{Id}$, one can check that
$$\frac{1}{2k} \, \bslambda_{\bsrho^{(n)}}\big([a^k,c^n]\big) \underset{k\to +\infty}{\longrightarrow} (\lambda_{\rho_1}(a),\lambda_{\rho_2}(a),0) = (t_a,1,0).$$
Similarly, by considering the commutators $[b^k,c^n]$ we see that $(1,t_b,0) \in \Lambda_{\bsrho^{(n)}}$.
We conclude using Lemma~\ref{lem:conv-lim-cone}.
\end{proof}

%%%%%%%%%%%%%%%%%%%%%%%%%
\subsection{Anosov representations and $\theta$-convexity} \label{subsec:DeyOh}

Let $G$ be a real semisimple linear Lie group with Lie algebra~$\g$. 
Let $\aaa$ be a Cartan subspace of~$\g$, let $\aaa^+$ be a closed Weyl chamber in~$\aaa$, and let $\Delta \subset \aaa^*$ be the corresponding set of simple restricted roots of $\aaa$ in~$\g$.
For each $\alpha\in \Delta$, let $\sigma_\alpha:\aaa \to \aaa$ be the reflection in the wall $\mathrm{Ker}(\alpha)$. 
Choose a subset $\varnothing \subsetneq \theta\subsetneq \Delta$, and let $W_\theta$ be the finite reflection group generated by $\{\sigma_\alpha\}_{\alpha\in\Delta\smallsetminus \theta}$.

Dey--Oh \cite{do25}, using results of Kapovich--Leeb--Porti \cite{klp25}, proved that for any word hyperbolic group~$\Gamma$ and any $\theta$-Anosov representation $\bsrho : \Gamma\to G$ (in the sense of Labourie), if the limit cone $\Lambda_{\bsrho} \subset \aaa^+$ is \emph{$\theta$-convex} (\ie $\bigcup_{w\in W_\theta}\, w\cdot\Lambda_{\bsrho}$ is connected and convex), then $\bsrho'\mapsto \Lambda_{\bsrho'}$ is continuous at~$\bsrho$.
In \cite[\S\,7]{do25}, they also observed that upper semicontinuity at~$\bsrho$ may fail if $\Lambda_{\bsrho}$ is not $\theta$-convex, giving an example $\bsrho$ valued in a proper algebraic subgroup of~$G$ (for which $\Lambda_{\bsrho}$ itself is not convex).

Lemma~\ref{lem:lim-cone-discont-d=3} gives the first example showing that upper semicontinuity at~$\bsrho$ may fail  even when $\bsrho$ has Zariski-dense image, when $\Lambda_{\bsrho}$ is convex but not $\theta$-convex.
Here $G=(\PSL_2\RR)^3$ and $\aaa^+ = \RR_{\geq 0}^3$: the reflections $\sigma_\alpha$ are in the three axis planes, with $\theta=\{e_1^*,e_2^*\}\subset \{e_1^*,e_2^*,e_3^*\}=\Delta$, and $\theta$-Anosov means that $\rho_1$ and~$\rho_2$ are convex cocompact.

%%%%%%%%%%%%%%%%%%%%%%%%%%%%%%%%%%%%%%%%%%%%%%%%%%%
%%%%%%%%%%%%%%%%%%%%%%%%%%%%%%%%%%%%%%%%%%%%%%%%%%%

 \end{document}